\documentclass[12pt]{amsart}
\usepackage{amsmath,amssymb,amsthm,amsfonts,setspace,hyperref,dsfont}

\newcommand{\NN}{\mathbb{N}}
\newcommand{\RR}{\mathbb{R}}
\newcommand{\CC}{\mathbb{C}}

\newcommand{\ZZ}{\mathbb{Z}}
\newcommand{\KK}{\mathbb{K}}

\newcommand{\abs}[1]{\lvert#1\rvert}

\newtheorem{theorem}{Theorem}
\newtheorem{corollary}{Corollary}[section]

\newtheorem{lemma}{Lemma}[section]
\newtheorem{example}{Exmaple}[section]
\newtheorem{proposition}{Proposition}[section]

\theoremstyle{definition}
\newtheorem{definition}{Definition}[section]
\newtheorem{remark}{Remark}[section]

\numberwithin{equation}{section}

\DeclareMathOperator{\diag}{diag}
\begin{document}

\title[DIFFERENTIAL RIGIDITY]{NEW CASES OF DIFFERENTIAL RIGIDITY FOR NON-GENERIC PARTIALLY
HYPERBOLIC ACTIONS}
\author{Zhenqi WANG}
\address{Department of Mathematics\\ The Pennsylvania State
University\\ University Park, PA,16802} \email{wang\_z@math.psu.edu}

\maketitle
\begin{abstract} We prove the locally differentiable rigidity of generic partially
hyperbolic abelian algebraic high-rank actions on compact
homogeneous spaces obtained from split symplectic Lie groups. We
also gave a non-generic action rigidity example on compact
homogeneous spaces obtained from $SL(2n,\RR)$ or $SL(2n,\CC)$. The
conclusions are based on geometric Katok-Damjanovic way and progress
towards computations of the generating relations in these groups.
\end{abstract}

\section{Introduction}
 Let
 $G$ be a $\RR$-semisimple Lie group with real rank greater than $2$,
 $\mathfrak{h}=\RR^k$ its split Cartan subalgebra and $\Gamma$ be a cocompact lattice in
 $G$. Let $A$ be a maximal split Cartan subgroup of $G$ with Lie algebra $\mathfrak{h}$, and $K$
be the compact part of the centralizer of $A$ which intersects with
$A$ trivially. Let $\Phi$ be the root system of $G$ with respect to
 $\mathfrak{h}$. Every root $r\in\Phi$ defines a Lyapunov hyperplane $\mathbb{H}_r=\ker r$. A 2-dimensional plane in
 $\mathfrak{h}$ is said to in general position if
it intersects each two distinct Lyapunov hyperplanes along distinct
lines. Let $\mathfrak{s}\subseteq \mathfrak{h}$ and $S$ the
connected subgroup in $G$ with Lie algebra $\mathfrak{s}$. The
action $\alpha_{0,S}$ of $S$ by left translations on $X:=G/\Gamma$
is generic if it contains a lattice contained in a 2-plane in
general position.

A. Katok and R.Spatzier proved the locally differentiable rigidity
of the Anosov full Cartan actions (also called Wely chamber flow) on
$K\backslash G/\Gamma$ by a harmonic analysis
method.\cite{Spatzier}. Later A. Katok and Damjanovic
\cite{Damjanovic2},\cite{Damjanovic3} proved the locally
differentiable rigidity of partially hyperbolic generic actions on
$X$ if $G$ is simple with $\Phi$ of nonsymplectic type with
combination of geometric methods and K-theory. The natural
difficulty of symplectic type is related to infinite types of
reducible Lyapunov-foliation cycles resulted by infinitly different
homotopic classes. For quasi-split groups, $(BC)_n$-type root
systems have the same infiniteness problem although the generating
relations are available\cite{Deodhar}. In this paper, we proved
locally differentiable rigidity of split symplectic Lie groups which
has been left open in \cite{Damjanovic3}. In fact, we can extend the
generic action rigidity results to quai-split groups(see remark
\ref{re:1}).

A  necessary condition for applicability of the Damjanovic--Katok
geometric method  (although not for local rigidity) is that
contracting distributions of various action elements and their
brackets of all orders   generate  the tangent space to the phase
space. Generic restrictions  for Cartan actions satisfy that
condition. Naturally one may look at non-generic restrictions of
Cartan actions.
\begin{example}
In $SL(4,\RR)$, consider the plane $P$ given by the equation $t_2 +
2t_3 + 3t_4 = 0.$ It is not in general position since the
intersection of Lyapunov hyperplanes obtained by roots $L_1-L_4$
with $P$ is the same as that of $L_2-L_3$. Though two unipotents
$v_{14}$ and $v_{23}$ are not both stable for any element of the
action, but consider
$$[v_{14}(t),v_{23}(s)]=[[v_{13}(t),v_{34}(1)],v_{23}(s)].$$
$v_{34}$ and $v_{23}$ are in the stable foliation of $(5,-8,1,2)$,
$v_{13}$ and $v_{23}$ in the stable foliation of $(0,-1,2,-1)$.
\end{example}
Hence we still get cocycle rigidity for action $\alpha_{0,P}$(more
details can be found in \cite{Damjanovic1}). Generally speaking, for
cocycle rigidity, ``generic'' is not necessary since we have enough
elements to trivialize all Lyapunov-cycles if $P$ intersects each
two distinct Lyapunov hyperplanes defined by simple roots along
distinct lines. It is based on the fact that Lie brackets of simple
roots and their inverse can generate the whole root system. But for
differential rigidity, more arguments are needed since usually the
coarse Lyapunov spaces are changed.

In this paper, we obtain an important example of locally
differentiable rigidity of non-generic actions on compact
homogeneous spaces obtained from $SL(2n,\RR)$ and $SL(2n,\CC)$. In
this example, the plane intersects Lyapunov hyperplanes defined by
simple roots along same lines, which disables the geometric
Katok-Damjanovic method. We gave new generating relations adapted to
the dynamical systems. These results are of independent interest and
have widen classes of locally rigid actions and the method can be
applied to other non-generic actions.

 To prove Theorem \ref{th:1} and \ref{th:4}, we make sufficient
progress towards the computations of generating relations of
$Sp(2n,\RR)$, $SL(2n,\RR)$ and $SL(2n,\CC)$ where $n\geq 2$.
Generating relations are available for these groups\cite{Matsumoto},
however, they do not provide sufficient information adapted to the
dynamical systems and need to be supplemented by more detailed
calculations.

Results for non-generic differential rigidity in other high rank
groups and more general conditions admitting differential rigidity
phenomenons will appear in a separate paper.

I'd like to thank my advisor Anatole Katok, who introduced me to the
non-generic problem and constantly encouraged me.
\subsection{The main results}
\begin{theorem}\label{th:1}  Let $\alpha_{0,S}$ be a high rank
generic restriction of the action of a maximal split Cartan subgroup
on $Sp(2n,\RR)/\Gamma$ where $n\geq 2$. If $\tilde{\alpha}$ is
$C^{\infty}$ action sufficiently $C^2$-close to $\alpha_{0,S}$, then
there exists a $C^\infty$ diffeomorphism $h : X \rightarrow X$ such
that $h^{-1}\tilde{\alpha}(a, h(x)) = \alpha_{0,\tilde{S}}$ where
$\tilde{S}$ is isomorphic and close to $S$ in a maximal split Cartan
subgroup of $Sp(2n,\RR)$.
\end{theorem}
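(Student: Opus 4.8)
The plan is to carry out the geometric Katok--Damjanovic program, adapted to the symplectic root system $\Phi$ of $Sp(2n,\RR)$. First I would record that the perturbed action $\tilde\alpha$, being $C^2$-close to $\alpha_{0,S}$, is still partially hyperbolic with the algebraic neutral distribution: for each regular $a\in S$ there is a stable foliation $\tilde W^-_a$ H\"older-close to the algebraic one, and intersecting the $\tilde W^-_a$ over a suitable half-space of $\mathfrak s$ produces, for every coarse Lyapunov functional $\chi$, a H\"older foliation $\tilde W_\chi$ whose leaves are $C^\infty$ (using non-stationary normal forms for the several commuting hyperbolic elements). Restricted to a leaf of $\tilde W_\chi$, the action $\tilde\alpha$ differs from the algebraic model on the corresponding unipotent root subgroup only by a cocycle over $\alpha_{0,S}$ taking values in diffeomorphisms of the leaf; the genericity hypothesis --- that the restriction contains a lattice in a $2$-plane in general position --- is exactly what is needed to invoke higher-rank cocycle rigidity as in \cite{Damjanovic1}, so this cocycle is cohomologous to a constant and one obtains a leafwise conjugacy $h_\chi$ intertwining $\tilde\alpha|_{\tilde W_\chi}$ with the model.

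Next I would glue the $h_\chi$ into a single map. Since the contracting root subgroups of the various action elements together with their iterated Lie brackets span the tangent bundle --- this is the role of genericity --- the family $\{h_\chi\}$ determines a candidate conjugacy $h$, and the real issue is mutual compatibility. Compatibility is controlled by the defining relations among the root subgroups of $Sp(2n,\RR)$: every relation $\prod_i U_{r_i}=\mathrm{id}$, that is, every closed Lyapunov cycle, forces an identity among the $h_{\chi_i}$, whose failure is measured by a cycle functional. This is where the symplectic case genuinely differs from the simply-laced groups handled in \cite{Damjanovic3}: the long roots $2L_i$ and the Chevalley commutator structure of $Sp(2n,\RR)$ generate infinitely many homotopy classes of reducible cycles. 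To overcome this I would use the computation of generating relations of $Sp(2n,\RR)$ promised in the introduction --- extracting from \cite{Matsumoto} the relations in a form aligned with the coarse Lyapunov decomposition and supplementing them with the dynamically relevant refinements --- to exhibit a finite set of generating cycles such that trivializing these forces every cycle to be trivial; each generating cycle is then killed by the cocycle rigidity of the previous step. Producing this finite generating set and proving its sufficiency is the principal obstacle and the technical core of the argument.

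Finally, once all cycles are trivialized the compatible family $\{h_\chi\}$ integrates to a H\"older homeomorphism $h_0$ with $h_0^{-1}\tilde\alpha(a,h_0(x))=\alpha_{0,\tilde S}(a,x)$, where $\tilde S$ is the split torus obtained by transporting $S$ through the perturbation; that $\tilde S$ again sits in a maximal split Cartan subgroup close to the original one follows from the local rigidity of the relevant abelian algebraic data (centralizer and normalizer). To upgrade regularity I would note that $h_0$ conjugates partially hyperbolic elements whose stable, unstable and neutral distributions are each smooth along the foliations $\tilde W_\chi$; leafwise ellipticity along the stable and unstable foliations, together with accessibility, makes $h_0$ smooth along every coarse Lyapunov leaf, and since these leaves span $TX$ with smooth transverse dependence, $h_0$ is a $C^\infty$ diffeomorphism, which is the conjugacy $h$ asserted in Theorem \ref{th:1}.
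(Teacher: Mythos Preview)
Your outline follows the Damjanovic--Katok scheme, and the first and last paragraphs are broadly in line with the paper. The real gap is in the middle step, where you claim you will ``exhibit a finite set of generating cycles such that trivializing these forces every cycle to be trivial; each generating cycle is then killed by the cocycle rigidity of the previous step.'' In the symplectic case this cannot work as stated. Among the generating relations of $Sp(2n,\RR)$ produced in Theorem~\ref{th:3} there is one, namely $h_{2L_n}(-1)^2=\mathrm{id}$ (relation~\eqref{for:22}), whose associated cycle is \emph{not} reducible by any combination of stable cycles and allowable substitutions: its homotopy class generates $\pi_1(Sp(2n,\RR))\cong\ZZ$, which is exactly the source of the infinitely many homotopy classes you allude to. No amount of higher-rank cocycle rigidity will trivialize that cycle downstairs.

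The paper's resolution is to lift the entire problem to the universal cover $\widetilde{Sp}(2n,\RR)$ and work with the lifted action on $\widetilde{Sp}(2n,\RR)/p^{-1}(\Gamma)$. Upstairs, relation~\eqref{for:22} simply disappears from the list of relations to be checked, because its powers lift to the center of the universal cover. The remaining relations \eqref{for:26}--\eqref{for:5} are stable cycles; relation~\eqref{for:6} is handled by Milnor's contraction argument, but this requires an additional algebraic input, the identity $(\tilde h_{L_{n-1}-L_n}(-1))^2=e$ established in~\eqref{for:4} during the proof of Lemma~\ref{le:5}. What remains is that the periodic cycle functional descends to a homomorphism $p^{-1}(\Gamma)\to\RR^n$, and one must show it is trivial; this is where Margulis's normal subgroup theorem enters. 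Your proposal never isolates the passage to the universal cover, the special role of~\eqref{for:22}, or the appeal to Margulis, and without these the compatibility step cannot be closed.
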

 Note that for $Sp(2n,\RR)$ generating relations are available
in \cite{Matsumoto}, but to get enough information for reducible
classes, further calculations are carried out on the Schur
multiplier. The proof resembles Theorem 2 in \cite{Zhenqi} on
dealing with infinite homotopic classes.
\begin{remark}\label{re:1}
By \cite{Deodhar}, any quasi-spilt simple groups of non-$(BC)_n$
type and non-symplectic type, are subject to following generating
relations:
\begin{align*}
&(1) c(s,tz)=c(s,t)c(s,z),\qquad (2)c(tz,s)=c(t,s)c(z,s),\\
&(3) c(s,1-s)=1.
\end{align*}
Thus for full maximal Cartan actions of these groups, rigidity are
abstained by using almost the same manners as in \cite{Damjanovic3}.
For $(BC)_n$-type are groups $SU(m+1,m)$; for symplectic type are
$Sp(2n,\RR)$ and $SU(m,m)$. Rigidity of $SU(m+1,m)$ and $SU(m,m)$
are solved in \cite{Zhenqi}; rigidity of $Sp(2n,\RR)$ was proved in
Theorem \ref{th:1}. Hence we in fact have locally differential
rigidity for high rank quasi-split Lie groups.
\end{remark}
For a more general case when the actions are not generic, for
example, if $G=SL(2n,\RR)$($SL(2n,\CC)(n\geq 2)$ and $\Gamma$ a
cocompact lattice in $G$. Let
\begin{align*}
D_+=&\exp\mathbb{D_+}=\{\diag\bigl(\exp t_1,\dots,\exp t_n,\exp
(-t_1),\dots,\exp (-t_n)\bigl):\\
&(t_1,\dots,t_n)\in\RR^n\}.
\end{align*}
Let the root system with respect to $D_+$ be $\Phi$.
\begin{theorem}[Differential rigidity of non-generic actions ]\label{th:4} Let $G=SL(2n,\RR)$($SL(2n,\CC)$, $n\geq 2$).
If $\tilde{\alpha}$ is $C^{\infty}$ action sufficiently $C^2$-close
to $\alpha_{0,S}$ where $S$ contains a lattice contained in a
generic 2-plane in $\mathbb{D_+}$ with respect to $\Phi$. Then there
exists a $C^\infty$ diffeomorphism $h : X \rightarrow X$ such that
$h^{-1}\tilde{\alpha}(a, h(x)) = \alpha_{0,\tilde{S}}$ where
$\tilde{S}$ is isomorphic and close to $S$ in the centralizer of a
maximal split Cartan subgroup of $SL(2n,\RR)$($SL(2n,\CC)$).
\end{theorem}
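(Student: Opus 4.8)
The plan is to follow the Damjanovic–Katok geometric scheme, adapted to the non-generic setting. The main structural difficulty flagged in the introduction is that for the plane $S$ in question, although it is generic with respect to $\Phi$ (the root system of $SL(2n,\cdot)$ with respect to $D_+$), the ambient group $SL(2n,\RR)$ or $SL(2n,\CC)$ has its \emph{own} larger set of roots, so that the coarse Lyapunov decomposition of $X$ seen by the action $\alpha_{0,S}$ is coarser than the one coming from all of $SL(2n,\cdot)$: several $SL(2n)$-root spaces get merged into a single coarse Lyapunov space for $S$. Thus the geometric method, which wants to realize each coarse Lyapunov distribution as a homogeneous foliation and trivialize the Lyapunov cycles, cannot be applied directly, and the burden is to produce \emph{new} generating relations for $SL(2n,\RR)$ and $SL(2n,\CC)$ that are compatible with this coarser dynamical partition.

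First I would set up the cohomological machinery: the action $\alpha_{0,S}$ is partially hyperbolic because $S$ contains a lattice in a generic $2$-plane of $\mathbb{D}_+$, so the coarse Lyapunov spaces for $S$ (built from the $D_+$-roots in $\Phi$, each of which may encompass a sum of $SL(2n)$-root spaces) are contracted/expanded by suitable elements, and one establishes the higher-rank cocycle rigidity (as in the example in the introduction, using that brackets of simple-root unipotents generate the full root system). The key point here is that even though two unipotents may fail to be jointly stable, an iterated-bracket expression like $[v_{14}(t),v_{23}(s)]=[[v_{13}(t),v_{34}(1)],v_{23}(s)]$ exhibits them inside stable foliations of different action elements, so the Lyapunov-cycle functional equations still close up. Second, I would pass from cocycle rigidity to the conjugacy $h$: produce the candidate $h$ along each coarse stable foliation by integrating the (now trivialized) obstruction, show the local pieces match on overlaps using the new generating relations, and then verify that $h$ is a genuine $C^\infty$ diffeomorphism using elliptic regularity / the standard bootstrap from $C^0$-conjugacy on each foliation to global smoothness. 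Finally, one identifies the conjugated action $h^{-1}\tilde\alpha h$ with an algebraic action $\alpha_{0,\tilde S}$ where $\tilde S$ lies in the centralizer of a maximal split Cartan subgroup (not the Cartan subgroup itself — this is why the statement says ``centralizer'', reflecting the non-generic merging of root spaces).

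The hard part, and the real content beyond \cite{Damjanovic3} and \cite{Zhenqi}, is the second step: the computation of generating relations for $SL(2n,\RR)$ and $SL(2n,\CC)$ adapted to the dynamics. The relations of Matsumoto-type \cite{Matsumoto} are available abstractly, but as the introduction stresses they do not carry enough information for the reducible Lyapunov-cycle classes — in the non-generic case there are infinitely many homotopy classes of reducible cycles (the same infiniteness phenomenon as for $(BC)_n$ and symplectic type), and one must resolve each class by an explicit path through root-group elements that respects which root spaces are jointly contracted. Concretely I expect to need a refined presentation: starting from the simple-root generators $v_{i,i+1}$, express every needed commutator relation $[v_{ij}(t),v_{kl}(s)]$ (and the Steinberg symbols $c(s,t)$ governing the obstruction to splitting) as words in generators all of which live in a common stable horosphere for some element of $S$, and show the Schur-multiplier / $K_2$-type obstruction vanishes along these dynamically admissible words. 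Once that combinatorial-group-theoretic input is in place, the remaining arguments — trivialization of cycles, construction of $h$, and the smooth bootstrap — proceed essentially as in the cited works, with Theorem \ref{th:1} furnishing the template for handling the infinite families of homotopy classes.
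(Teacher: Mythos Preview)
Your overall architecture (new generating relations $\Rightarrow$ cocycle rigidity via periodic cycle functionals $\Rightarrow$ Hirsch--Pugh--Shub plus conjugacy $\Rightarrow$ smooth bootstrap) matches the paper's, but the central technical input is misidentified in a way that would block the argument.

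You propose to obtain cocycle rigidity by the iterated-bracket trick from the $SL(4,\RR)$ example in the introduction, expressing the needed relations in the standard $A_{2n-1}$ generators $v_{i,i+1}$. But the paper states explicitly that for the action in Theorem~\ref{th:4} this mechanism is \emph{disabled}: the plane $S\subset\mathbb{D}_+$ meets Lyapunov hyperplanes of distinct $A_{2n-1}$ simple roots along the \emph{same} line, so one cannot separate the $v_{ij}$'s into stable foliations of different elements as in that example. The paper's actual move is different and is the real content of Theorem~\ref{th:5}: one abandons the $A_{2n-1}$ presentation entirely and re-parametrizes $SL(2n,\KK)$ by generators $x_r(t_1,t_2)$ indexed by the $C_n$-type root system $\Phi$ of $D_+$, with \emph{two-dimensional} root groups for short roots. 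With respect to $\Phi$ the plane $S$ \emph{is} generic, so each $x_r$ lives in a single coarse Lyapunov leaf, and the Chevalley-type relations \eqref{for:8}--\eqref{for:10} are automatically stable cycles. One then has to prove (this is the work in Section~\ref{sec:3}) that $(\widetilde{G},\pi_1)$ built from these non-standard generators is a central extension and that $\ker\pi_1$ is generated by the single Steinberg-symbol family \eqref{for:11}.

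A second, smaller misconception: you anticipate ``infinitely many homotopy classes of reducible cycles'' as in the symplectic case. For $SL(2n,\KK)$ the fundamental group is $\ZZ/2$ or trivial, and indeed Theorem~\ref{th:5} has no analogue of the relation \eqref{for:22}; only the bimultiplicative symbol \eqref{for:11} survives, and it is handled exactly as in Milnor's argument. Finally, your step~2 reads as if $h$ is built directly along stable foliations; in the partially hyperbolic setting the paper first invokes Hirsch--Pugh--Shub to straighten the neutral foliation, obtaining a $D_G$-valued cocycle $\beta$ over $\alpha_{0,S}$, and only then trivializes $\beta$ and builds the conjugacy $h'(x)=H^{-1}(x)\cdot x$.
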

\begin{remark} For any 2-plane $P$ in $D_+$ it is not generic with respect to the root system defined by the maximal Cartan
subgroups(that is $\{L_k-L_\ell\}_{k\neq \ell}$) since for any
different indices $i,j$, Lyapunov hyperplanes $\mathbb{H}_{i,j}$ and
$\mathbb{H}_{i\pm n,j\pm n}$ intersect $P$ on same lines. $\Phi$ is
different from the usual roots systems of special linear groups. It
behaves similarily to that of symplectic groups instead.
\end{remark}

\subsection{ Generating relations and Steinberg symbols} In this section
we state two theorems which play a crucial role in proofs of Theorem
\ref{th:1} and \ref{th:4}. The proof of those theorems are given in
Section \ref{sec:6} and \ref{sec:3} which comprise the algebraic
part of the paper.

We use $e_{k,\ell}$ to denote the $2n\times 2n$ matrix in with the
 $(k,\ell)$ entry  equal to 1, and all other entries equal to  0.
 Let
\begin{align*}
&f_{L_i+L_j}=(e_{i,j+n}-e_{j,i+n})_{i<j},\qquad
f_{L_i-L_j}=(e_{i,j}-e_{j+n,i+n})_{i\neq j},\\
&f_{-L_i-L_j}=(e_{j+n,i}-e_{i+n,j})_{i<j},\qquad
f_{2L_i}=e_{i,i+n},\\
&f_{-2L_i}=e_{i+n,n}.
\end{align*}
Let $\exp$ be the exponentiation map for matrices. For $t\in \RR$ we
write
\begin{align*}
&x_r(t)=\exp (tf_{r})\qquad\text{ for }0\neq r=\pm L_i\pm L_j.
\end{align*}
Then we have following results
\begin{theorem}\label{th:3}
$Sp(2n,\RR)$, $n\geq 2$ is generated by  $x_{r}(a)$, where $0\neq
r\in\Phi=\{\pm L_i\pm L_j\}(0\leq i,j\leq n)$  subject to the
relations:
\begin{align}
&x_r(a)x_r(b)=F_r (a+b),\label{for:26}\\
&[x_{r} (a), x_p (b)]=\prod_{ir+jp\in \Phi, i,j>0}
x_{ir+jp}(g_{ijpr}(a,b)), r+p\neq 0\label{for:27}\\
&[x_{r} (a), x_p (b)]=\emph{id}, \qquad 0\neq r+p\notin \pm L_i\pm
L_j,\label{for:5}
\end{align}
here $a,b\in\RR^*$ and $g_{ijpr}$ are functions of $a,b$ depending
only on the structure of $Sp(2n,\RR)$;

\begin{align}
&h_{L_1-L_2}(a)h_{L_1-L_2}(b)=h_{L_1-L_2}(ab),\label{for:6}
\end{align}
where
$h_{L_1-L_2}(t)=x_{L_1-L_2}(t)x_{L_2-L_1}(-t^{-1})x_{L_1-L_2}(t)x_{L_1-L_2}(-1)x_{L_2-L_1}(1)x_{L_1-L_2}(-1)$
for each $t\in \RR^{*}$;
\begin{align}\label{for:22}
h_{2L_{n}}(-1)h_{2L_{n}}(-1)=\emph{id},
\end{align}
where
\begin{align*}
h_{2L_{n}}(-1)&=\bigl(x_{2L_n}(-1)x_{2L_n}(1)x_{2L_n}(-1)\bigl)^2.
\end{align*}
\end{theorem}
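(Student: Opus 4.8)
The plan is to recognize $Sp(2n,\RR)$ as the simply connected Chevalley group of type $C_n$ over $\RR$ and to cut down Steinberg's general presentation to the short list of relations displayed in the statement. First I would invoke Steinberg's presentation theorem together with Matsumoto's analysis (\cite{Matsumoto}): with $w_r(t)=x_r(t)x_{-r}(-t^{-1})x_r(t)$ and $h_r(t)=w_r(t)w_r(-1)$, the group $Sp(2n,\RR)$ is generated by the $x_r(a)$, $r\in\Phi$, $a\in\RR$, subject to the additivity relations \eqref{for:26}, the Chevalley commutator formulas \eqref{for:27} and \eqref{for:5} (with the monomial structure constants $g_{ijpr}$ read off from the $\mathfrak{sp}(2n)$ bracket in the Chevalley basis $f_r$ fixed in the excerpt), and the torus relations $h_r(a)h_r(b)=h_r(ab)$ for every root $r$ and all $a,b\in\RR^*$. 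Equivalently, the Steinberg group $\mathrm{St}(C_n,\RR)$ defined by \eqref{for:26}--\eqref{for:5} surjects onto $Sp(2n,\RR)$ with central kernel generated by the symbols $c_r(a,b):=h_r(a)h_r(b)h_r(ab)^{-1}$. So the whole problem reduces to showing that, in the quotient $\Gamma$ of $\mathrm{St}(C_n,\RR)$ by \eqref{for:6} and \eqref{for:22}, every $c_r(a,b)$ already equals the identity.

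Next I would compress the family $\{c_r\}$ to two symbols. A classical computation with $w_s(1)h_r(t)w_s(1)^{-1}$, carried out inside the rank-two subsystems of $C_n$ ($A_1\times A_1$, the $A_{n-1}$ generated by the $L_i-L_j$ when $n\ge 3$, and the $C_2$ on any pair $\{L_i,L_j\}$), is valid already in $\mathrm{St}(C_n,\RR)$ and shows that $c_r(a,b)$ depends only on the length of $r$: $c_r=c_{L_1-L_2}$ for every short root $r$ and $c_r=c_{2L_n}$ for every long root $r$. The same rank-two relations give the Steinberg identity $c_{L_1-L_2}(a,1-a)=1$ and the bimultiplicativity of $c_{L_1-L_2}$, so $(a,b)\mapsto c_{L_1-L_2}(a,b)$ is a Steinberg symbol on $\RR$ and the subgroup it generates is a homomorphic image of $K_2(\RR)$. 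Relation \eqref{for:6} states precisely that $c_{L_1-L_2}(a,b)=1$ for all $a,b$, so by the length reduction every short-root symbol is trivial in $\Gamma$. This is the step that absorbs the ``infinite homotopic classes'': the divisible part of $K_2(\RR)$, though not finitely generated, is annihilated in one stroke as soon as $h_{L_1-L_2}$ is multiplicative, exactly as in Theorem~2 of \cite{Zhenqi}.

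It remains to treat the long roots, and here the heart of the matter is an explicit identity inside the $C_2$-subsystem on $\{L_{n-1},L_n\}$ (a copy of $Sp(4,\RR)$) expressing $c_{2L_n}(a,b)$ as a product of short-root symbols times $c_{2L_n}(-1,-1)^{\epsilon(a,b)}$, where $\epsilon(a,b)\in\{0,1\}$ equals $1$ precisely when $a<0$ and $b<0$; that is, modulo short-root symbols the long-root symbol is the quadratic Hilbert symbol of $\RR$. Granting this, once the short-root symbols are trivial we get $c_{2L_n}(a,b)=c_{2L_n}(-1,-1)^{\epsilon(a,b)}$ in $\Gamma$; and since $w_r(-t)=w_r(t)^{-1}$ is already forced by \eqref{for:26}, one has $h_{2L_n}(1)=\mathrm{id}$, hence $c_{2L_n}(-1,-1)=h_{2L_n}(-1)^2$, which relation \eqref{for:22} kills. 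Therefore every $c_r(a,b)$ vanishes in $\Gamma$, the kernel of $\mathrm{St}(C_n,\RR)\to Sp(2n,\RR)$ dies once \eqref{for:6} and \eqref{for:22} are imposed, and the relations displayed in the statement constitute a presentation of $Sp(2n,\RR)$.

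The main obstacle is the explicit Schur-multiplier computation underlying the last two paragraphs: pinning down all signs and the constants $g_{ijpr}$ in \eqref{for:27}, verifying the conjugation rule $w_s(1)h_r(t)w_s(1)^{-1}=h_{w_s(r)}(t)\cdot(\text{known symbol})$ with the correct correction term so as to get the ``depends only on length'' reduction, and establishing the $C_2$-identity that identifies the residual long-root symbol over $\RR$ with the order-two Hilbert symbol rather than with something larger. Everything outside of those rank-two calculations is a bookkeeping repackaging of Steinberg's and Matsumoto's theorems.
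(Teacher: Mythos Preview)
Your proposal is correct and follows essentially the same route as the paper: reduce to Steinberg's presentation so that the kernel of $\mathrm{St}(C_n,\RR)\to Sp(2n,\RR)$ is generated by the symbols $c_r(a,b)$, collapse these to a short-root symbol and a long-root symbol via Weyl conjugation, kill the short-root symbol with \eqref{for:6}, and reduce the long-root symbol modulo short-root symbols to powers of $h_{2L_n}(-1)^2$, killed by \eqref{for:22}. The paper organizes this as $\ker\pi_1=\prod_{\gamma\in\Delta}(\ker\pi_1\cap\tilde H_\gamma)$ (Lemma~\ref{le:4}) and then carries out explicitly the $C_2$ computation you flag as ``the main obstacle'': Lemma~\ref{le:22} gives the needed conjugation identities between $\tilde w_{2L_n}$ and $\tilde w_{L_{n-1}\pm L_n}$, and Lemma~\ref{le:5} uses them to write each $\tilde h_{2L_n}(t)$ as a product of short-root $\tilde h$'s times $\tilde h_{2L_n}(-1)^{\epsilon}$ with $\epsilon=0$ for $t>0$ and $\epsilon=1$ for $t<0$, which is exactly your Hilbert-symbol identity made concrete.
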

Now we state the theorem about new generating relations in
$SL(2n,\KK)$, $\KK=\RR$ or $\CC$, $n\geq 2$. Let
\begin{align*}
&f_{L_i+L_j}(t_1,t_2)=(t_1 e_{i,j+n}+t_2e_{j,i+n})_{i<j},\qquad
f_{L_i-L_j}(t_1,t_2)=(t_1e_{i,j}+t_2e_{j+n,i+n})_{i\neq j},\\
&f_{-L_i-L_j}(t_1,t_2)=(t_1e_{j+n,i}+t_2e_{i+n,j})_{i<j},\qquad
f_{2L_i}(t)=te_{i,i+n},\\
&f_{-2L_i}(t)=te_{i+n,n}.
\end{align*}
For $(t_1,t_2)\in \KK^2$, $t\in\KK$ we write
\begin{align*}
&x_\rho(t)=\exp (tf_{\rho})\qquad\text{ for }\rho=\pm 2L_i,\\
&x_r(t_1,t_2)=\exp (f_r(t_1,t_2)),\qquad\text{ for }r=\pm L_i\pm
L_j,i\neq j.
\end{align*}
Since $\KK$ is embedded in $\KK^2$ in a obvious way, there is no
confusion if we write $x_{r}(t,0)=x_{r}(t)$ for $r=\pm 2L_i$. On the
other hand, if we write $x_{r}(a)$ where $a\in\KK^2$, then $a=(t,0)$
for some $t\in\KK$.
\begin{theorem}\label{th:5}
$SL(2n,\KK)$ $(\KK=\RR$ or $\CC)$, $n\geq 2$ is generated by
$x_{r}(a)(a\in\KK^2)$, where $0\neq r\in\Phi=\{\pm L_i\pm
L_j\}(0\leq i,j\leq n)$ subject to the relations:
\begin{align}
&x_r(a)x_r(b)=x_r (a+b),\label{for:8}\\
&[x_{r} (a), x_p (b)]=\prod_{ir+jp\in \Phi, i,j>0}
x_{ir+jp}(g_{ijpr}(a,b)), r+p\neq 0\label{for:9}\\
&[x_{r} (a), x_p (b)]=\emph{id}, \qquad 0\neq r+p\notin
\Phi\label{for:10}
\end{align}
here $a,b\in\KK^*$ and $g_{ijpr}$ are functions of $a,b$ depending
only on the structure of $SL(2n,\RR)(SL(2n,\CC))$;
\begin{align}
&h_{L_1-L_2}(t_1,0)h_{L_1-L_2}(t_2,0)=h_{L_1-L_2}(t_1\cdot
t_2,0),\label{for:11}
\end{align}
where
\begin{align*}
h_{L_1-L_2}(t,0)&=x_{L_1-L_2}(t,0)x_{L_2-L_1}(-t^{-1},0)x_{L_1-L_2}(t,0)\\
&\cdot x_{L_1-L_2}(-1,0)x_{L_2-L_1}(1,0)x_{L_1-L_2}(-1,0)
\end{align*}
for each $t\in \KK^*$.
\end{theorem}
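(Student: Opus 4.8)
The plan is to establish Theorem \ref{th:5} by realizing $SL(2n,\KK)$ as a Steinberg-type central extension relative to the root system $\Phi=\{\pm L_i\pm L_j\}$, which is the root system of $Sp(2n)$ rather than the usual $A_{2n-1}$-system. First I would record that the matrices $f_r(t_1,t_2)$ for $r=\pm L_i\pm L_j$ ($i\neq j$) and $f_{\pm 2L_i}(t)$ span all the off-diagonal root spaces of $\mathfrak{sl}(2n,\KK)$ grouped according to the involution $g\mapsto J^{-1}(g^{t})^{-1}J$ fixing $Sp(2n)$; each ``long'' root space $L_i\pm L_j$ is two-dimensional (hence the $\KK^2$-parameter), while $\pm 2L_i$ is one-dimensional. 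The exponentials $x_r(a)$ then generate $SL(2n,\KK)$ because the ordinary elementary unipotents $e_{k,\ell}(t)=\exp(te_{k,\ell})$ are recovered: for $k,\ell\le n$ or $k,\ell>n$ one has $e_{k,\ell}(t)=x_{L_i-L_j}(t,0)$ or $x_{L_i-L_j}(0,t)$, and for the mixed blocks $e_{k,\ell+n}(t),e_{\ell+n,k}(t)$ come from $x_{L_i+L_j}$, $x_{-L_i-L_j}$, $x_{\pm 2L_i}$; since the $e_{k,\ell}(t)$ generate $SL(2n,\KK)$, so do the $x_r(a)$.

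Next I would verify that relations \eqref{for:8}--\eqref{for:10} hold in $SL(2n,\KK)$: \eqref{for:8} is additivity of $\exp$ on each root space (the root spaces are abelian subalgebras, so $x_r(a)x_r(b)=x_r(a+b)$ with $a+b$ the $\KK^2$- or $\KK$-sum); \eqref{for:9} and \eqref{for:10} are the Chevalley commutator formulas computed directly in matrices, where the structure constants $g_{ijpr}$ are bilinear (or, for the cases producing a $2L_i$ term, quadratic) in the entries of $a,b$ — exactly the same computation underlying \eqref{for:27} in Theorem \ref{th:3}, now carried out inside $SL$ with the two-parameter long root groups. Relation \eqref{for:11} is the standard $\SL_2$ identity $h(t_1)h(t_2)=h(t_1t_2)$ for the diagonal torus element $h_{L_1-L_2}(t,0)=\diag(t,t^{-1},1,\dots)$ embedded via the $L_1-L_2$ copy of $\SL_2$, which one checks by the usual Steinberg-symbol manipulation; note that here — unlike \eqref{for:22} in the symplectic case — there is no extra $h(-1)^2=\mathrm{id}$ relation to impose, because the relevant Steinberg symbol $\{t_1,t_2\}$ in $K_2(\KK)$ is already killed inside $SL$ (the universal central extension restricted to the $2L_i$ root $\SL_2$ splits differently than it does for $Sp$).

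The substantive content is the converse: that the abstract group $\widehat{G}$ presented by generators $x_r(a)$ and relations \eqref{for:8}--\eqref{for:11} maps isomorphically onto $SL(2n,\KK)$, i.e. the kernel of the surjection $\widehat{G}\to SL(2n,\KK)$ is trivial. Here I would invoke the known presentation of $SL(2n,\KK)$ by Steinberg/Matsumoto-type relations \cite{Matsumoto} relative to the $A_{2n-1}$ root system and then perform a change of generators: express the $A_{2n-1}$-Steinberg generators $\hat{x}_{k,\ell}(t)$ and their defining relations as words in the $\Phi$-generators $x_r(a)$, and conversely, and check that under this dictionary the two relation sets generate the same normal subgroup of the free group. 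The key algebraic step — and the main obstacle — is handling the ``collapsed'' pairs of $A_{2n-1}$-root subgroups $e_{k,\ell}$ and $e_{\ell+n,k+n}$, which in the $\Phi$-picture are fused into a single two-parameter group $x_{L_i-L_j}(t_1,t_2)$: one must show that the Steinberg relations linking these two $A_{2n-1}$ subgroups (commutator and the $h$-conjugation relations that would ordinarily detect the difference between them) are \emph{consequences} of \eqref{for:8}--\eqref{for:11}, so that no extra Schur-multiplier class survives. This is precisely the ``detailed calculation on the Schur multiplier'' alluded to after Theorem \ref{th:1}; I expect it to reduce, after the change of generators, to checking finitely many identities among Steinberg symbols $\{a,b\}$ in $K_2(\KK)$ and verifying that the only nontrivial ones that could obstruct the presentation do not arise for $SL$ (in contrast to $Sp$, where \eqref{for:22} is needed). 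Once this is done, $\widehat{G}\cong SL(2n,\KK)$ and Theorem \ref{th:5} follows.
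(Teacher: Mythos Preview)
Your approach is viable but genuinely different from the paper's. You propose to leverage the known Matsumoto presentation of $SL(2n,\KK)$ with respect to the $A_{2n-1}$ root system and then translate: express the standard elementary generators $e_{k,\ell}(t)$ and their Steinberg relations in terms of the $\Phi$-generators $x_r(a)$, and check that the two relation sets cut out the same group. The paper, by contrast, never appeals to the $A_{2n-1}$ presentation at all. It builds the abstract group $\widetilde{G}$ from relations \eqref{for:8}--\eqref{for:10} directly, proves a Bruhat-type decomposition $\widetilde{G}=\tilde{x}^+\tilde{W}\tilde{x}^+$ from scratch (mimicking Steinberg's argument), deduces $\ker(\pi_1)\subseteq\tilde{H}$, and then reduces $\ker(\pi_1)$ to the single subgroup $\tilde{H}_{L_1-L_2}^1$ by Weyl-group conjugation, where relation \eqref{for:11} finishes the job. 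Your route is shorter if one grants Matsumoto's theorem as a black box; the paper's route is self-contained and closer in spirit to Deodhar's treatment of quasi-split groups.

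One point in your sketch deserves sharpening. You explain the absence of an analogue of \eqref{for:22} by saying the $2L_i$ copy of $\SL_2$ ``splits differently'' inside $SL$ than inside $Sp$. The actual mechanism in the paper is more concrete and is something your translation argument would also need: because the long-root spaces are two-dimensional, the elements $\tilde{w}_r(t,0)$, $\tilde{w}_r(0,t)$, $\tilde{w}_{2L_i}(t)$ generate a group whose image in permutations is the full $S_{2n}$, not just the $C_n$ Weyl group. This enlarged Weyl action is what puts $\tilde{H}_{2L_n}$ and $\tilde{H}_{L_1-L_2}^1$ in the same orbit and forces $\ker(\pi_1)\cap\tilde{H}_{2L_n}=\ker(\pi_1)\cap\tilde{H}_{L_1-L_2}^1$; in $Sp(2n,\RR)$ the Weyl group is genuinely smaller and the long root stays isolated, which is why \eqref{for:22} is needed there. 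In your dictionary approach the same phenomenon appears as the statement that the $A_{2n-1}$ relation $h_{e_n-e_{n+1}}(s)h_{e_n-e_{n+1}}(t)=h_{e_n-e_{n+1}}(st)$ (the one linking the two $n\times n$ blocks) is a consequence of \eqref{for:8}--\eqref{for:11}; proving this requires exactly the conjugation by the ``mixed'' Weyl elements $\tilde{w}_r(t,0)$, so you should make that step explicit rather than appealing to $K_2$.
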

\subsection{Proof of Theorem \ref{th:1}}
Details for Cartan action $\alpha_{0,S}$ can be found in
\cite{Damjanovic3}. $\alpha_{0,S}$ can be lifted to a $S$-action
$\tilde{\alpha}_{0,S}$ on $\widetilde{Sp}(2n,\RR)$ where
$\widetilde{Sp}(2n,\RR)$ is the universal cover of $Sp(2n,\RR)$. We
denote the new action by $\widetilde{\alpha}_{0,S}$ and the
projection from $\widetilde{Sp}(2n,\RR)$ to $Sp(2n,\RR)$ by $p$.
Following the proof-line of Theorem 2 in \cite{Damjanovic3}, we just
need to show the following 2 things: 1. Reducibility of closed
lifted cycles defined by relations (\ref{for:26}) to (\ref{for:22})
in the universal cover. 2. Trivialization of any homomorphism from
$p^{-1}(\Gamma)$ to $\RR^n$. The second one is clear by Margulis
normal subgroup theorem\cite{Margulis}. We show the proof of the
first one.

Relations of the type \eqref{for:26}, \eqref{for:27} and
\eqref{for:5} are contained in a leaf of the stable manifold for
some element of $\alpha_{0,S}$, hence they also lie in a stable leaf
 of the stable manifold for
some element of $\tilde{\alpha}_{0,S}$.

For relation \eqref{for:6} follow exactly the same way as in
Milnor¡¯s proof in [\cite{Milnor}, Theorem A1] or in
\cite{Damjanovic2}, combined with (\ref{for:4}) in proof of Lemma
\ref{le:5}, we can show that they are contractible and after an
allowable substitution, it is reducible.

For relation \eqref{for:22}, notice
$h_{2L_{n}}(-1)=$diag($1,\dots,\underset{n}-1,1,\dots,\underset{2n}-1$),
thus homotopy classes of
$\bigl(h_{2L_{n}}(-1)h_{2L_{n}}(-1)\bigl)^{k}(k\in\ZZ)$ generate the
fundermental group of $Sp(2n,\RR)$ which is isomorphic to $\ZZ$.
Hence we don't need to consider this relation in
$\widetilde{Sp}(2n,\RR)$.

Hence we finished the proof.
\section{Local differential rigidity of non-generic actions}
\subsection{Non-generic
Cartan actions on $SL(2n,\RR)/\Gamma$ and $SL(2n,\CC)/\Gamma$} We
consider Lie groups $G=SL(2n,\KK)$, $\KK=\RR$ or $\CC$, $n\geq 2$.
Its Lie algebra is the set of traceless matrices. Let
\begin{align*}
D_+=\exp\mathbb{D}_+=&\{\diag\bigl(\exp t_1,\dots,\exp t_n,\exp
(-t_1),\dots,\exp (-t_n)\bigl):\\
&(t_1,\dots,t_n)\in\RR^n\}.
\end{align*}Let $\alpha_0$ be left translations of $D_+$ on $G/\Gamma$. Let
$\Phi$ be the root system with respect to $D_+$. The roots are $\pm
L_i \pm L_j(i<j\leq n)$ with dimensions 2 and $\pm 2L_i(1\leq i\leq
n)$ with dimension 1. The set of positive roots $\Phi^{+}$ and the
corresponding set of
 simple roots $\Delta$ are
\begin{align*}
&\Phi^{+}=\{L_i-L_j\}_{i<j}\cup\{L_i+L_j\}_{i<j}\cup\{2L_i\}_{i},\\
&\Delta=\{L_i-L_{i+1}\}_i\cup\{2L_n\}.
 \end{align*}
 For $1\leq i\neq j\leq n$ the hyperplanes in $\mathbb{D_+}$ defined
by $$\mathbb{H}_{i-j} = \{(t_1, \dots , t_{n}) \in \mathbb{D_+} :
t_i = t_j\},$$
$$\mathbb{H}_{i+j} = \{(t_1, \dots , t_{n}) \in
\mathbb{D_+} : t_i+t_j=0\}\quad \text{and}$$   $$\mathbb{H}_{i} =
\{(t_1, \dots, t_{n}) \in \mathbb{D_+} : t_i=0\}$$ are
\emph{Lyapunov hyperplanes} for the action $\alpha_0$, i.e. kernels
of Lyapunov exponents of $\alpha_0$. Elements of
$\mathbb{D_+}\backslash \bigcup \mathbb{H}_{r}$(where $r=i\pm j,i$)
are \emph{regular} elements of the action. Connected components of
the set of regular elements are $Weyl$ $chambers$.

The smallest non-trivial intersections of stable foliations of
various elements of the action $\alpha_0$ are $Lyapunov$
$foliations$.

The corresponding root spaces are
\begin{align*}
 &\mathfrak{g}_{L_i+L_j}=(\KK e_{i,j+n}+\KK e_{j,i+n})_{i<j}, \qquad
 \mathfrak{g}_{L_i-L_j}=(\KK e_{i,j}+\KK e_{j+n,i+n})_{i\neq j},\\
 &\mathfrak{g}_{-L_i-L_j}=(\KK e_{j+n,i}+\KK e_{i+n,j})_{i<j},\qquad
 \mathfrak{g}_{2L_i}=\KK
 e_{i,i+n},\\
 &\mathfrak{g}_{-2L_i}=\KK e_{i+n,n}.
\end{align*}
For $t_1,t_2\in\KK$, let
\begin{align*}
&f_{L_i+L_j}(t_1,t_2)=(t_1e_{i,j+n}+t_2te_{j,i+n})_{i<j},\qquad f_{-L_i-L_j}(t_1,t_2)=(t_1e_{j+n,i}+t_2e_{i+n,j})_{i<j},\\
&f_{L_i-L_j}(t_1,t_2)=(t_1e_{i,j}+t_2e_{j+n,i+n})_{i\neq j},\qquad f_{-2L_i}(t_1)=t_1e_{i+n,n},\\
&f_{2L_i}(t_1)=t_1e_{i,i+n}.
\end{align*}
For $(t_1,t_2)\in\KK^2$, $t\in\KK$ we write
\begin{align*}
&x_\rho(t)=\exp (tf_{\rho})\qquad\text{ for }\rho=\pm 2L_i,\\
&x_r(t_1,t_2)=\exp (f_r(t_1,t_2)),\qquad\text{ for }r=\pm L_i\pm
L_j.
\end{align*}
We define foliations $F_{r}$ for $r=\pm L_i\pm L_j(i\neq j),$ and
$F_{\rho}$ for $\rho=\pm 2L_i$ for which the leaf through $x$
\begin{align}\label{for:2}
&F_{r}(x)=x_r(t_1,t_2)x,\qquad F_{\rho}(x)=x_\rho(t)x
\end{align}
consists of all left multiples of $x$   by matrices of the form
$x_r(t_1,t_2)$ or $x_\rho(t)$.

The foliations $F_r$ and $F_{\rho}$ are invariant under $\alpha_0$.
In fact, let $\mathfrak{t}=(t_1,t_2,\dots,t_n)\in\mathbb{D_+},$ for
$\forall a_1,a_2\in\KK$ we have Lie bracket relations
$$[\mathfrak{t}, f_r(a_1,a_2)]=r(\mathfrak{t})f_r(a_1,a_2),\qquad [\mathfrak{t},f_\rho(a_1)]=\rho(\mathfrak{t})f_\rho(a_1)$$
where $r(\mathfrak{t})=\pm t_i\pm t_j$ if $r=\pm L_i\pm L_j(i\neq
j)$; $\rho(\mathfrak{t})=\pm 2t_i$ if $\rho=\pm 2L_i$.

Using the basic identity for any square matrices $X,Y$:
$$\exp X\exp
Y=\exp(e^sY)\exp X, \text{ if } [X,Y]=sY,$$ it follows
\begin{align}
&\alpha_0(\mathfrak{t})\exp (f_{r}(a_1,a_2))x=\exp (e^{r(\mathfrak{t})}f_{r}(a_1,a_2))\alpha_0(\mathfrak{t})x,\\
&\alpha_0(\mathfrak{t})\exp (f_{\rho}(a_1))x=\exp
(e^{\rho(\mathfrak{t})}f_{\rho}(a_1))\alpha_0(\mathfrak{t})x.
\end{align}
Hence the leaf $F_r(x)$ is mapped into
$F_r(\alpha_0(\mathfrak{t})x)$ and $F_\rho(x)$ is mapped into
$F_\rho(\alpha_0(\mathfrak{t})x)$. Consequently the foliation $F_r$
and $F_\rho$ are contracted (corr. expanded or neutral) under
$\mathfrak{t}$ if $r(\mathfrak{t})<0$ (corr. $r(\mathfrak{t})>0$ or
$r(\mathfrak{t})=0$). If the foliation $F_r$ and $F_\rho$ are
neutral under $\alpha_0(\mathfrak{t})$, it is in fact isometric
under $\alpha_0(\mathfrak{t})$. The leaves of the orbit foliation is
$\mathcal{O}(x) = \{\alpha_0(\mathfrak{t})x : \mathfrak{t}\in
\mathbb{D_+}\}$.

The tangent vectors to the leaves in \eqref{for:2} for various $r$
and $\rho$ together with their length one Lie   brackets form a
basis of the tangent space at every $x\in X$.

Let $\mathbb{S}\subset \mathbb{D_+}$ be a closed subgroup which
contains a lattice $\mathbb{L}$ in a plane in general position and
let $S=\exp\mathbb{S}$. One can naturally think of $S$ as the image
of an injective homomorphism $i_0 : \ZZ^k\times\RR^{\ell}\rightarrow
D_+$ (where $k+\ell\geq 2)$.

The  action $\alpha_{0,S}$ of $S$ by left translations on $G/\Gamma$
is given by
\begin{align}\label{for:7}
\alpha_{0,S}(a,x)=i_0(a)\cdot x, \qquad x\in G/\Gamma.
\end{align}
If $\mathbb{P}$ is a generic $2$-plane with respect to $\Phi$ then
the foliations $F_r$ and $F_\rho$ are also Lyapunov foliations for
$\alpha_{0,\mathbb{P}}$. The leaves  of  $F_r$ and $F_\rho$ are
intersections of the leaves of stable manifolds of the action by
different elements of $\mathbb{P}$. The same holds for the action by
any regular lattice in $\mathbb{P}$ and thus for any generic
restriction $\alpha_{0,S}$ with respect to $\Phi$.

If $\KK=\RR$, the neutral foliation for a generic restriction
$\alpha_{0,S}$ is given by
$$\mathcal{N}_0(x)=\{D\cdot x: x\in SL(2n,\RR)/\Gamma\}$$
where $D$ is the set of diagonal matrices in $SL(2n,\RR)$ with
positive entries; if $\KK=\CC$, the neutral foliation is given by
$$\mathcal{N}_0(x)=\{DT\cdot x: x\in SL(2n,\CC)/\Gamma\}$$
where $T$ is the set of diagonal matrices in $SL(2n,\CC)$ whose
entries are of absolute value $1$. Thus $T$ is isomorphic to
$\mathbb{T}^{2n-1}$. Let $D_G=D$ if $G=SL(2n,\RR)$; $D_G=DT$ if
$G=SL(2n,\CC)$.

\subsection{Preliminaries of cocycles} Let $\alpha:A\times
M\rightarrow M$ be an action of a topological group $A$ on a compact
Riemannian manifold M by diffeomorphisms. For a topological group
$Y$ a $Y$-valued {\em cocycle} (or {\em an one-cocycle}) over
$\alpha$ is a continuous function $\beta : A\times M\rightarrow Y$
satisfying:
\begin{align}
\beta(ab, x) = \beta(a, \alpha(b, x))\beta(b, x)
\end{align}
 for any $a, b \in A$. A cocycle is
{\em cohomologous to a constant cocycle} (cocycle not depending on
$x$) if there exists a homomorphism $s : A\rightarrow Y$ and a
continuous transfer map $H : M\rightarrow Y$ such that for all $a
\in A $
\begin{align}\label{for:13}
 \beta(a, x) = H(\alpha(a, x))s(a)H(x)^{-1}
\end{align}
In particular, a cocycle is a {\em coboundary} if it is cohomologous
to the trivial cocycle $\pi(a) = id_Y$, $a \in A$, i.e. if for all
$a \in A$ the following equation holds:
\begin{align}
 \beta(a, x) = H(\alpha(a, x))H(x)^{-1}.
\end{align}
For more detailed information on cocycles adapted to the present
setting see \cite{Damjanovic2}.
\subsection{Paths and cycles for a collection of foliations}
In this section we recall some notation and results from
\cite{Damjanovic2}. Let $\mathcal{T}_1, \dots, \mathcal{T}_r$ be a
collection of mutually transversal continuous foliations on a
compact manifold $M$ with smooth simply connected leaves.

For $N\in\NN$ and $j_k\in\{1, \dots , r\}, k\in\{1, \dots ,N-1\}$ an
ordered set of points $p(j_1,\dots , j_{N-1}) : x_1, \dots , x_N \in
M$ is called an $\mathcal{T}$-path of length $N$ if for every $k\in
\{1, \dots ,N-1\}, x_{i+1}\in \mathcal{T}_{j_k}(x_k)$. A closed
$\mathcal{T}$-path(i.e., when $x_N=x_1$) is a $\mathcal{T}$-cycle.

A $\mathcal{T}$-cycle $p(j_1, \dots , j_{N-1}) : x_1, \dots ,
x_N=x_1 \in M$ is called $stable$ for the $A$ action $\alpha$ if
there exists a regular element $a\in A$  such that the whole cycle
$p$ is contained in a leaf of the stable foliations for the map
$\alpha(a,\cdot)$, i.e., if
\begin{align*}
\bigcap_{k=1}^N\{a:\chi_{j_k}(a)<0\}\neq \phi.
\end{align*}
\begin{definition}\label{def:1}
Let $p(j_1, \dots , j_{N-1}) : x_1, \dots , x_N$ and $p_n(j_1,
\dots, j_{N-1}) : x^n_1, \dots , x^n_N$ be two $\mathcal{T}$-paths.
Then $p=\lim_{n\rightarrow\infty}p_n$ if for all $k\in
\{1,\dots,N\}$
\begin{align*}
x_k=\lim_{n\rightarrow\infty}x^n_k.
\end{align*}
Limits of $\mathcal{T}$-cycles are defined similarly.

Two $\mathcal{T}$-cycles, $p(j_1, \dots , j_{N+1}) : x_1, \dots
,x_k,y,x_k,\dots, x_{N}=x_1$ and $p(j_1, \dots , j_{N-1}) :
x_1,\dots , x_k, x_{k+1},\dots x_N$ are said to be conjugate if
$y\in \mathcal{T}_i(x_k)$ for some $i\in \{1,\dots,r\}$. For
$\mathcal{T}$-cycles, $p(j_1, \dots , j_{N-1}) : x_1, \dots
,x_{N}=x_1$ and $p'(j_1', \dots , j_{K-1}') :x_1= x_1', \dots,
x_{K}'=x_1$ define their composition or concatenation $p\ast p'$ by
$$p\ast p'(j_1,\dots, j_{N-1}, j_1', \dots , j_{K-1}') : x_1, \dots x_N,
x_1', \dots , x_K'= x_1.$$ Let
$\mathcal{A}\mathcal{S}^s_{\mathcal{T}}(\alpha)$ denote the
collection of stable $\mathcal{T}$-cycles. Let
$\mathcal{A}\mathcal{S}_{\mathcal{T}}(\alpha)$ denote the collection
of $\mathcal{T}$-cycles which contains
$\mathcal{A}\mathcal{S}^s_{\mathcal{T}}(\alpha)$ and is closed under
conjugation, concatenation of cycles, and under the limitation
procedure defined above.
$\mathcal{A}\mathcal{S}_{\mathcal{T}}^x(\alpha)$ denotes the subset
of $\mathcal{A}\mathcal{S}_{\mathcal{T}}(\alpha)$ which contain
point $x$.

A path $p : x_1,\dots ,x_k,\dots, x_N$ reduces to a path $p^{'}:
x_1, x^{'}_2,\dots ,x^{'}_k,\dots, x_N^{'}$ via an
$\alpha$-$allowable$ $\mathcal{T}$-substitution if the
$\mathcal{T}$-cycle
\begin{align*}
p\ast p^{'}: x_1, \dots ,x_k,\dots, x_{N-1},x_N,
x_{N-1}^{'},\dots,x^{'}_2,x_1
\end{align*}
obtained by concatenation of $p$ and $p^{'}$ is in the collection
$\mathcal{A}\mathcal{S}_{\mathcal{T}}(\alpha)$.

Two $\mathcal{T}$-cycle $c_1$ and $c_2$ are $\alpha$-equivalent if
$c_1$ reduces to $c_2$ via a finite sequence of $\alpha$-allowable
$\mathcal{T}$ -substitutions. A $\mathcal{T}$-cycle we call
$\alpha$-reducible if it is in
$\mathcal{A}\mathcal{S}_{\mathcal{T}}(\alpha)$.
\end{definition}

\begin{definition}
For $N\in\NN$ and $j_k\in\{1, \dots , r\}, k\in\{1, \dots ,N\}$ an
ordered set of points $p(j_1, \dots, j_{N}) : x_1, \dots , x_N,
x_{N+1}=x_1 \in M$ is called an $\mathcal{T}$-cycle of length $N$ if
for every $k\in \{1, \dots ,N\}, x_{i+1}\in \mathcal{T}_{j_k}(x_k)$.
A $\mathcal{T}$ cycle which consists of a single point is a trivial
$\mathcal{T}$-cycle.
\end{definition}
\begin{definition}
Foliations $\mathcal{T}_1, \dots, \mathcal{T}_r$ are locally
transitive if there exists $N\in\NN$ such that for any $\varepsilon
> 0$ there exists $\delta> 0$ such that for every $x\in M$ and for every
$y\in  B_X(x, \delta)$ (where $B_M(x, \delta)$ is a $\delta$ ball in
$M$) there is a $\mathcal{T}$-path $p(j_1, \dots , j_{N-1}) : x =
x_1, x_2, \dots, x_{N-1}, x_N = y$ in the ball $B_M(x, \varepsilon)$
such that $x_{k+1}\in \mathcal{T}_{j_k}(x_k)$ and
$d_{\mathcal{T}_{j_k}(x_k)}(x_{k+1}, x_k) < 2\varepsilon$.
\end{definition}
In other words, any two sufficiently close points can be connected
by a $\mathcal{T}$-path of not more than $N$ pieces of a given
bounded length. Here, for a submanifold $Y$ in $M$, $d_Y (x, y)$
denotes the infimum of lengths of smooth curves in $Y$ connecting
$x$ and $y$.
\subsection{Cocycle rigidity for $\alpha_{0,S}$}The purpose of this section is to describe a geometric method for
proving cocycle rigidity for this action following
\cite{Damjanovic1, Damjanovic2}.
\begin{proposition}\label{po:2} Any small
$D_G$-valued H\"older cocycle over $\alpha_{0,S}$ on $G/\Gamma$ is
cohomologous to a constant cocycle via a  H\"older transfer
function.

Any $D_G$-valued $C^\infty$ small cocycle over the generic
restriction of the split Cartan action on $G/\Gamma$ is cohomologous
to a constant cocycle via a $C^\infty$ transfer function.
\end{proposition}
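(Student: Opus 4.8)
The plan is to follow the geometric cocycle-rigidity scheme of \cite{Damjanovic1,Damjanovic2}: reduce the cohomology equation to a statement about the collection of Lyapunov cycles, and then show that every such cycle is $\alpha$-reducible. First I would take the collection of foliations $\mathcal{T}=\{F_r,F_\rho\}$ for $r=\pm L_i\pm L_j$ ($i\neq j$) and $\rho=\pm 2L_i$, which by the remarks following \eqref{for:2} are mutually transversal continuous foliations with smooth simply connected leaves, are invariant under $\alpha_{0,S}$, and (together with their length-one brackets) span the tangent space at every point. For a small $D_G$-valued H\"older cocycle $\beta$ over $\alpha_{0,S}$, restricting $\beta$ to a regular element $a$ for which a given $F_r$ is stable produces, via the standard contraction/telescoping argument, a H\"older transfer function along the leaves of $F_r$; doing this for each $r,\rho$ gives a candidate transfer map $H$ on each leaf of each foliation. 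The equation \eqref{for:13} along a single foliation is then automatic; what remains is consistency of $H$ around $\mathcal{T}$-cycles.

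The second and main step is to verify that every $\mathcal{T}$-cycle is in $\mathcal{A}\mathcal{S}_{\mathcal{T}}(\alpha_{0,S})$, equivalently that the obstruction cocycle vanishes on all of $\mathcal{A}\mathcal{S}_{\mathcal{T}}(\alpha_{0,S})$. By the structure of $\Phi$ and by local transitivity of $\mathcal{T}$ (which I would check using that the length-one brackets of the $f_r,f_\rho$ span), it suffices to handle the cycles coming from the defining relations of the group, i.e. the relations in Theorem \ref{th:5}: the commutator relations \eqref{for:9} and \eqref{for:10} lie inside a single stable leaf of some element of the action (because for $r+p\in\Phi$ or $r+p=0$ one can choose $a\in\mathbb{S}$ with $r(a),p(a),(ir+jp)(a)$ all negative, using that $\mathbb{S}$ contains a lattice in a generic $2$-plane), hence are stable cycles and thus reducible; the relation \eqref{for:8} is a one-parameter relation inside a single leaf $F_r$; and the toral relation \eqref{for:11} is the one genuinely reducible-but-not-stable cycle, handled exactly as the $SL(2,\cdot)$-relation in Milnor's argument in \cite{Milnor} and in \cite{Damjanovic2}, contracting it to a point and applying an allowable substitution. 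Since these relations generate $SL(2n,\KK)$, and since $\Gamma$ acts on the leaves, the transfer functions glue to a well-defined H\"older map $H:G/\Gamma\to D_G$ conjugating $\beta$ to a constant cocycle $s(a)$, where $s$ is the induced homomorphism $S\to D_G$.

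For the $C^\infty$ statement, I would invoke elliptic regularity / the standard bootstrapping argument for cocycles over partially hyperbolic actions: a H\"older transfer function for a $C^\infty$ cocycle is smooth along each stable and unstable leaf of each regular element, and smoothness along the neutral orbit direction follows from the cohomology equation; by the non-commutative Journ\'e-type lemma the transversal regularity upgrades $H$ to $C^\infty$ on $G/\Gamma$. The hard part will be the cycle-reducibility bookkeeping in step two: one must make sure that for the non-generic features of $\Phi$ — specifically that the coarse Lyapunov spaces for $\alpha_{0,S}$ genuinely are the $F_r,F_\rho$ and not larger sums — every relation-cycle can be routed through stable cycles, and that the toral relation \eqref{for:11} is the only non-stable reducible class that appears; this is where the detailed structure of $SL(2n,\KK)$ recorded in Theorem \ref{th:5} (as opposed to the generic generating relations used in \cite{Damjanovic3}) is essential.
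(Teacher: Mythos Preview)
Your overall architecture matches the paper's: use the Lyapunov foliations $F_r,F_\rho$, invoke local transitivity via length-one brackets, reduce cycles corresponding to the relations \eqref{for:8}--\eqref{for:11} of Theorem~\ref{th:5} (with \eqref{for:11} handled \`a la Milnor), and upgrade regularity at the end. But there is a genuine gap in the gluing step. Reducing the relation-cycles only shows that the periodic cycle functional vanishes on $\mathcal{T}$-cycles that lift to \emph{closed} paths in $G$, i.e.\ on cycles representing the identity in $\Gamma$. A general $\mathcal{T}$-cycle on $G/\Gamma$ lifts to a path from $x$ to $x\gamma$ for some $\gamma\in\Gamma$, and the PCF then depends only on $\gamma$ and defines a homomorphism $p:\Gamma\to D_G$. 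Your sentence ``since these relations generate $SL(2n,\KK)$, and since $\Gamma$ acts on the leaves, the transfer functions glue'' skips exactly this obstruction: nothing you have said forces $p$ to be trivial, and without that the candidate $H$ is multi-valued on $G/\Gamma$.

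The paper fills this gap as follows: the $D$-component of $p$ vanishes by Margulis's normal subgroup theorem (any homomorphism from $\Gamma$ to an abelian group has finite image, hence trivial image in $D$), and for $\KK=\CC$ the $T$-component has image of order bounded by $[\Gamma:[\Gamma,\Gamma]]<\infty$, so the \emph{smallness} hypothesis on $\beta$ forces it to be the identity. This is the only place smallness is used, and your sketch never invokes it. A second, smaller point: for $\KK=\RR$ the relation \eqref{for:11} is not itself contractible --- it must be doubled before the Milnor homotopy applies --- so your treatment of that relation needs the $\RR$/$\CC$ case distinction the paper makes.
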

For a cocycle $\beta : \mathbb{S}\times G/\Gamma\rightarrow DT$ over
$\alpha_0$, we define $DT$-valued potential of $\beta$ as
\begin{align*}\label{for:1}
&\left\{\begin{aligned} &P^\gamma_a(y,x)=\lim_{n\rightarrow
+\infty}\beta(na,
y)^{-1}\beta(na,x),\qquad \gamma(a)<0\\
&P^\gamma_a(y,x)=\lim_{n\rightarrow -\infty}\beta(na,
y)^{-1}\beta(na,x),\qquad \gamma(a)>0
 \end{aligned}
 \right.
\end{align*}
where $\gamma\in\Phi$ and $a\in\mathbb{S}$. Now for any $F$-cycle
$\mathfrak{c}: x_1, \dots, x_{N+1} = x_1$ on $M$, we can define the
corresponding periodic cycle functional:
\begin{align}
\text{PCF}(\mathfrak{c})(\beta)=\prod_{i=1}^{N}P^{\gamma_i}_a(x_i,x_{i+1})(\beta)
\end{align}
where $\gamma_i\in\Phi$.

Two essential properties of the PCF which are crucial for our
purpose are that PCF is continuous and that it is invariant under
the operation of moving cycles around by elements of the action
$\alpha_{0,S}$. We now state an important proposition which is the
base of our further proof.
\begin{proposition}\emph{(Proposition 4. \cite{Damjanovic1})}
\label{po:1} Let $\alpha$ be an $\RR^k$ action by diffeomorphisms on
a compact Riemannian manifold $M$ such that a dense set of elements
of $\RR^k$ acts normally hyperbolically with respect to an invariant
foliation. If the foliations $\mathcal{F}_1,\dots,\mathcal{F}_r$ are
locally transitive and if $\beta$ is a H\"older cocycle over the
action $\alpha$ such that $F(\mathcal{C})(\beta) = 0$ for any cycle
$\mathcal{C}$ then: $\beta$ is cohomologous to a constant cocycle
via a continuous map $h:M\rightarrow Y$.
\end{proposition}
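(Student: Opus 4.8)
The plan is to reduce the proposition to the machinery of periodic cycle functionals developed in \cite{Damjanovic1, Damjanovic2} and the local transitivity hypothesis, following the Katok--Damjanovic scheme. First I would use the normal hyperbolicity hypothesis to build, for each leafwise-contracting element $a$, the stable holonomy and the associated potential $P^\gamma_a$; the key point is that since a dense set of elements acts normally hyperbolically, the limits defining $P^\gamma_a(y,x)$ converge and are H\"older in $(x,y)$ along the leaves, and moreover they piece together consistently because of the cocycle equation. From these potentials one assembles a candidate transfer map by fixing a base point $x_0 \in M$ and, for each $x$, connecting $x_0$ to $x$ by a short $\mathcal{F}$-path $p(j_1,\dots,j_{N-1})$ guaranteed by local transitivity, and setting $h(x)$ to be the product of the potentials $P^{\gamma_{j_k}}_a$ along that path.

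Next I would verify that $h$ is well defined, i.e. independent of the choice of connecting path. Two such paths differ by an $\mathcal{F}$-cycle $\mathcal{C}$ through $x_0$ and $x$, and the difference of the two candidate values for $h(x)$ is precisely $\mathrm{PCF}(\mathcal{C})(\beta)$ up to conjugation; the hypothesis $F(\mathcal{C})(\beta)=0$ for every cycle is exactly what forces this obstruction to vanish. Here one must also use that $\mathrm{PCF}$ is continuous and invariant under moving cycles by $\alpha$, so that the vanishing on the closure $\mathcal{A}\mathcal{S}_{\mathcal{F}}(\alpha)$ of stable cycles (under conjugation, concatenation, and limits) suffices — this is where the closure operations in Definition \ref{def:1} are used. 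Continuity of $h$ then follows from the uniformity in $\delta$ of the local transitivity property together with the H\"older dependence of the potentials.

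Finally I would check the cohomology equation $\beta(a,x) = h(\alpha(a,x))\,s(a)\,h(x)^{-1}$. Applying $\alpha(a,\cdot)$ to a connecting path from $x_0$ to $x$ produces a connecting path from $\alpha(a,x_0)$ to $\alpha(a,x)$; comparing the potential-product along this image path with the original, and using the invariance of the potentials under $\alpha$ (they transform by the cocycle, which is the defining property of $P^\gamma_a$), yields that $\beta(a,x)h(x)^{-1}h(\alpha(a,x))^{-1}$ depends only on $a$, giving the homomorphism $s$. The main obstacle I expect is the well-definedness step: controlling the obstruction cocycle on \emph{all} $\mathcal{F}$-cycles, not just the manifestly stable ones, which requires carefully exploiting that $\mathcal{A}\mathcal{S}_{\mathcal{F}}(\alpha)$ is closed under limits and concatenation and that $\mathrm{PCF}$ extends continuously — the passage from stable cycles to general cycles via approximation is the delicate part, and it is precisely there that local transitivity (providing short connecting paths with controlled lengths) does the real work. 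Since this is the content of Proposition 4 of \cite{Damjanovic1}, I would cite that result and only indicate the adaptation needed for the $DT$-valued setting, where $DT$ being abelian simplifies the product-of-potentials bookkeeping.
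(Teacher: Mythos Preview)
The paper does not prove this proposition at all: it is quoted verbatim as Proposition 4 of \cite{Damjanovic1} and used as a black box, with no argument supplied. So there is no ``paper's own proof'' to compare your proposal against.

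Your sketch is a reasonable summary of the Katok--Damjanovic construction that underlies the cited result (potentials along stable leaves, path-independence from vanishing of PCF, continuity from local transitivity, and derivation of the cohomology equation), and indeed you yourself conclude by saying you would simply cite \cite{Damjanovic1}. That is exactly what the paper does. For the purposes of this paper no further argument is needed; if you wanted to include a proof you would be reproducing material from \cite{Damjanovic1}, not from the present paper.
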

\subsection{Proof of Proposition \ref{po:2}}
At first we show the cocycle rigidity for H\"older cocycles. The
invariant foliations that we considered are $F_{r}$ and $F_\rho$
where $r=\pm L_i\pm L_j(i\neq j),$ and $\rho=\pm 2L_i$. Notice that
those foliations are smooth and their Lie brackets at length one
generate the whole tangent space. This implies that this system of
foliations is locally $1/2-H\ddot{o}lder$ transitive \cite[Section
4, Proposition 1]{Kononenko}. Every such cycle represents a relation
in the group. The word represented by this cycle can be written as a
product of conjugates of basic relations in Theorem \ref{th:5}.

Relations of the type \eqref{for:8}-\eqref{for:10} are contained in
a leaf of the stable manifold for some element of $\alpha_{0,S}$.

For relation \eqref{for:11}, if $\KK=\RR$, if doubled, follow
exactly the same way as in Milnor¡¯s proof in [\cite{Milnor},
Theorem A1] or in \cite{Damjanovic2}, we can show that they are
contractible and reducible; if $\KK=\CC$, then they are contractible
and reducible. Hence we finished the proof. Finally, to cancel
conjugations one notices that canceling
$F_r(t_1,t_2)F_r(t_1,t_2)^{-1}=\textrm{id}$ or
$F_\rho(t_1)F_\rho(t_1)^{-1}=\textrm{id}$ are also an allowed
substitution and each conjugation can be canceled inductively using
that.

Thus, the value of the periodic cycle functional for any H\"older
cocycle $\beta$ depends only on the element of $\Gamma$ this cycle
represents. Furthermore, these values provide a homomorphism $p$
from $\Gamma$ to $D_G$. The restriction of $p$ on $D$ is trivial by
Margulis normal subgroup theorem\cite{Margulis}. Notice $T$ is
abelian, thus order of $p(\Gamma)$ is bonded by
$[\Gamma:[\Gamma,\Gamma]]$ which is finite\cite[4'
Theorem]{Margulis}. By smallness of the cocycle, restriction of $p$
on $T$ vanishes.

 Hence all periodic cycle
functionals vanish on $\beta$. Now Proposition \ref{po:1} implies
that $\beta$ is cohomologous to a constant cocycle via a
H$\ddot{o}$lder transfer function.

Now consider the case of $C^{\infty}$ cocycles. Notice that  the
transfer function $H$ constructed using periodic cycle functionals
is $C^\infty$ along the  stable foliations  of  various elements of
the action. Now a general result stating that in case the smooth
distributions along with their Lie brackets generate the tangent
space at any point of a manifold a function smooth along
corresponding foliations is necessarily smooth (see \cite{Spatzier2}
for a detailed discussion and references to proofs), implies that
the transfer map $H$ is $C^\infty$.
\section{Proof of Theorems \ref{th:5}}\label{sec:8}
 The neutral foliation for a
generic restriction $\alpha_{0,S}$ is a smooth foliation, we may use
the Hirsch-Pugh-Shub structural stability theorem [\cite{shub},
Chapter 6]. Namely if $\widetilde{\alpha}_S$ is a sufficiently
$C^1$-small perturbation of $\alpha_{0,S}$ then for all elements
$a\in A$ which are regular for $\alpha_{0,S}$ and sufficiently away
from non-regular ones (denote this set by $\overline{A}$) are also
regular for $\widetilde{\alpha}_S$. The central distribution is the
same for any $a\in \overline{A}$ and is uniquely integrable to an
$\widetilde{\alpha}_S(a,\cdot)$-invariant foliation which we denote
by $\mathcal{N}$. Moreover, there is a H\"older homeomorphism
$\widetilde{h}$ of $G/\Gamma$, $C^0$ close to the $id_X$, which maps
leaves of $\mathcal{N}_0$ to leaves of $\mathcal{N}$:
$\widetilde{h}\mathcal{N}_0 = \mathcal{N}$. This homeomorphism is
uniquely defined in the transverse direction, i.e. up to a
homeomorphism preserving $\mathcal{N}$. Furthermore, $\widetilde{h}$
can be chosen smooth and $C^1$ close to the identity along the
leaves of $\mathcal{N}_0$ although we will not use the latter fact.
Clearly the leaves of the foliation $\mathcal{N}_0$ are preserved by
every $a\in \overline{A}$. The action $\alpha_S$ is H\"older but it
is smooth and $C^1$-close to $\alpha_{0,S}$ along the leaves of the
neutral foliation $\mathcal{N}_0$.

 Let us
define an action $\alpha_S$ of $S$ on $G/\Gamma$ as the conjugate of
$\widetilde{\alpha}_S$ by the map $\widetilde{h}$ obtained from the
Hirsch-Pugh- Shub stability theorem: $$\alpha_S :=
\widetilde{h}^{-1}\circ \widetilde{\alpha}_S\circ \widetilde{h}$$
Since the action  $\alpha_S$ is a $C^0$ small perturbation of
$\alpha_{0,S}$ along the leaves of the neutral foliation of
$\alpha_{0,S}$ whose leaves are $\{D_G\cdot x : x\in X\}$, we have
that $\alpha_S$ is given by a map $\beta: (\ZZ^k\times
\RR^\ell)\times G/\Gamma\rightarrow D_G$ by
\begin{align}\label{for:12}
\alpha_S(a, x) = \beta(a, x) \cdot \alpha_{0,S}(a, x)
\end{align}
 for $a\in \ZZ^k\times \RR^\ell$ and $x\in G/\Gamma$.
Notice that since $\alpha_S$ is a small perturbation of the action
by left translations $\alpha_{0,S}$, it can be lifted to a
$S$-action $\overline{\alpha}_S$ on $G$ commuting with the right
$\Gamma$ action on $G$, and $\beta$ is lifted to a cocycle
$\overline{\beta}$ over $\overline{\alpha}_S$ (for more details see
[\cite{ Margulis2}, example 2.3]). In particular we have:
$$\overline{\beta}(ab,x)=\overline{\beta}(a,\overline{\alpha}_S(b,x))\overline{\beta}(b,x).$$

It follows that since $\alpha_S$ is H\"{o}lder,
$\overline{\beta}(a,x)$ is small H\"{o}lder cocycle over the action
$\overline{\alpha}_S$, due to the smallness of the perturbation.

Let $U:U_1,\dots,U_r$ denote the invariant unipotent foliations for
the lifted action $\overline{\alpha}_{0,S}$ of $\alpha_{0,S}$ on $G$
which projects to invariant Lyapunov foliations
 for $\alpha_{0,S}$;
and let $T:T_1,\dots,T_r$ denote invariant Lyapunov foliations for
lifted  $\overline{\alpha}_S$ which projects to invariant Lyapunov
foliations for $\alpha_S$. Notice that the latter foliations  have
only H\"older leaves but we are justified in calling them Lyapunov
foliations since they are images of  Lyapunov foliations  for a
smooth perturbed action under a H\"older conjugacy.
 Denote  the neutral foliation $\mathcal{N}_0$ on $G$ by $N_0$. An immediate corollary of the result of
Brin and Pesin \cite{pesin} on persistence of local transitivity of
stable and unstable foliations of a partially hyperbolic
diffeomorphisms and the fact that the collection of homogeneous
Lyapunov foliations $U:U_1,\dots,U_r$ is locally transitive and
$T:T_1,\dots,T_r$ is transitive and they are leafwise $C^0$ close.
Following the proof line closely with only trivial modifications
from those of [Section 6.2, 6.2 and 6.4 \cite{Damjanovic2}], and
[Section 5.3,5.4, \cite{Damjanovic3}], we can show $U$-cycles and
$T$-cycles project to each other along the neutral foliations
(precise definitions are in [Section 6.2,\cite{Damjanovic2}]), which
implies:
\begin{proposition}\label{po:3}
The lifted cocycle for the perturbed action $\overline{\alpha}_S$ is
cohomologous to a constant cocycle.
\end{proposition}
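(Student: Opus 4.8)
The plan is to reduce the statement to Proposition \ref{po:2} via the Katok--Damjanović geometric scheme, carried out at the level of the lifted action $\overline{\alpha}_S$ on $G$. First I would fix the finite set $\overline{A}$ of regular elements of $\alpha_{0,S}$ bounded away from the Lyapunov hyperplanes, and use the Hirsch--Pugh--Shub structural stability of the neutral foliation $\mathcal{N}_0$ already recorded in the preceding discussion to conjugate $\widetilde{\alpha}_S$ to an action $\alpha_S$ which equals $\alpha_{0,S}$ up to the $D_G$-valued cocycle $\beta$ of \eqref{for:12}, and lift everything to $G$: the action $\overline{\alpha}_S$ commuting with the right $\Gamma$-action and the small Hölder cocycle $\overline{\beta}$ over it. The key geometric input is the Brin--Pesin persistence of local transitivity: since the homogeneous Lyapunov foliations $U:U_1,\dots,U_r$ are locally transitive and the perturbed Lyapunov foliations $T:T_1,\dots,T_r$ are leafwise $C^0$-close to them and transitive, the two systems of cycles are matched along the neutral foliation.

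The heart of the argument is then to show that the periodic cycle functional of $\overline{\beta}$ vanishes on every $T$-cycle. Following [Section 6.2--6.4, \cite{Damjanovic2}] and [Section 5.3--5.4, \cite{Damjanovic3}] with only trivial modifications, one checks that $U$-cycles and $T$-cycles project to each other along $N_0$, so it suffices to treat the homogeneous cycles, i.e. the relations in $G$ encoded by the generators $x_r(a)$. Here I would invoke Theorem \ref{th:5}: every $U$-cycle represents a word in the $x_r(a)$ which is a product of conjugates of the basic relations \eqref{for:8}--\eqref{for:11}. Relations \eqref{for:8}--\eqref{for:10} each lie in a single stable leaf of some regular element of $\alpha_{0,S}$ and hence contribute trivially to the PCF; relation \eqref{for:11} (the Steinberg multiplicativity $h_{L_1-L_2}(t_1,0)h_{L_1-L_2}(t_2,0)=h_{L_1-L_2}(t_1t_2,0)$) is handled, exactly as in Milnor's argument [\cite{Milnor}, Theorem A1] and in \cite{Damjanovic2}, by showing the corresponding lifted cycle is contractible, so that after an $\alpha_S$-allowable substitution it becomes reducible — over $\CC$ this is immediate, over $\RR$ one passes to the doubled relation first. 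Conjugations are removed inductively using that $F_r(t_1,t_2)F_r(t_1,t_2)^{-1}=\mathrm{id}$ and $F_\rho(t_1)F_\rho(t_1)^{-1}=\mathrm{id}$ are allowable substitutions.

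Once the PCF vanishes on all cycles, Proposition \ref{po:1} applies to $\overline{\beta}$ over $\overline{\alpha}_S$ and yields a continuous transfer map $h:G\to D_G$ trivializing $\overline{\beta}$ up to a constant cocycle; the bootstrapping along stable foliations together with the bracket-generating property upgrades $h$ to $C^\infty$ exactly as in the proof of Proposition \ref{po:2}, completing the proof. The main obstacle I anticipate is precisely the non-generic feature emphasized in the introduction: because the $2$-plane meets the Lyapunov hyperplanes of the simple roots $L_i-L_{i+1}$ and $2L_n$ along the same lines as those of $L_{i\pm n}-L_{(i+1)\pm n}$ etc., the coarse Lyapunov spaces are genuinely coarser than the individual root spaces, so one must verify that each relation in Theorem \ref{th:5} — whose whole point is to be adapted to this coarsened dynamical structure — really does fit into a single coarse stable leaf or becomes contractible after an allowable substitution; checking the contractibility and the allowability of the doubling substitution for \eqref{for:11} in the presence of the merged coarse Lyapunov directions is the delicate point, and is where the finer computation on the Schur multiplier alluded to after Theorem \ref{th:1} is needed.
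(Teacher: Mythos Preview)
Your outline follows the same route as the paper: invoke Hirsch--Pugh--Shub and Brin--Pesin to get the perturbed Lyapunov foliations $T$ close to the homogeneous ones $U$, cite \cite{Damjanovic2,Damjanovic3} for the projection of $T$-cycles to $U$-cycles along $N_0$, and then reduce the vanishing of the periodic cycle functional to the generating relations of Theorem~\ref{th:5}. The paper's own argument for Proposition~\ref{po:3} is in fact the single sentence preceding it (the projection statement plus the references), so your expanded account of how relations \eqref{for:8}--\eqref{for:11} are disposed of is really a recapitulation of the proof of Proposition~\ref{po:2}, which is fine.

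Two small points. First, the step you flag as the main obstacle---the Schur-multiplier computation ``alluded to after Theorem~\ref{th:1}''---is the $Sp(2n,\RR)$ calculation (relation~\eqref{for:22}); in the $SL(2n,\KK)$ setting of Proposition~\ref{po:3} there is no analogue of that long-root relation, and Theorem~\ref{th:5} already packages the relations so that \eqref{for:8}--\eqref{for:10} sit in coarse stable leaves and only \eqref{for:11} needs the Milnor contraction. Second, your sentence ``once the PCF vanishes on all cycles, Proposition~\ref{po:1} applies to $\overline{\beta}$ over $\overline{\alpha}_S$'' skips a step the paper makes explicit immediately after stating Proposition~\ref{po:3}: on $G/\Gamma$ a Lyapunov cycle lifts to a $U$-path in $G$ ending at $x\gamma$, the generating-relations argument shows the PCF depends only on $\gamma\in\Gamma$, and one then needs Margulis' normal subgroup theorem (plus, for the torus factor when $\KK=\CC$, the smallness of the cocycle) to kill the resulting homomorphism $\Gamma\to D_G$. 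Without this, Proposition~\ref{po:1}---which is stated for compact $M$---does not directly yield a $\Gamma$-periodic transfer map. This is not a different approach, just a missing line in your sketch.
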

The value of the periodic cycle functional for $H\tilde{o}lder$
cocycle $\beta$ over $\widetilde{\alpha}_S$ or its $H\tilde{o}lder$
conjugate $\alpha_S$
 depends only on the element of $\Gamma$ this cycle
represents. Using the same trick as in proof of Proposition
\ref{po:2}, we can show every homomorphism from $\Gamma$ to $D_G$ is
trivial.

$\beta$ is cohomologous to a small constant cocycle $g:\ZZ^k\times
\RR^\ell\rightarrow D_G$ via a continuous transfer map
$H:G/\Gamma\rightarrow D_G$ which can be chosen close to identity in
$C^0$ topology if the perturbation $\widetilde{\alpha}_S$ small in
$C^2$ topology.

Let us consider the map $h'(x):= H^{-1}(x)\cdot x$. We have from the
cocycle equation \eqref{for:12} and the cohomology equation
\eqref{for:13}
$$h'({\alpha}_S(a, x)) = \alpha_{0,\widetilde{S}}(a, h'(x))$$ where $\alpha_{0,\widetilde{S}}(a, x) := i(a)\cdot x$,
where $i(a) := g(a)i_0(a), a\in A$ and $i_0$ is as in \eqref{for:7}.
Since the map $h'$ is $C^0$ close to the identity it is surjective
and thus the action $\alpha_S$ is semi-conjugate to the standard
perturbation $\alpha_{0,\widetilde{S}}$ of $\alpha_{0,S}$, i.e.
$\alpha_{0,\widetilde{S}}$ is a factor of $\alpha_S$. It is enough
to prove that $h'$ is injective. By simple transitivity of
$U$-holonomy group and the fact that there is no non-trivial element
in $DT$ such that all its powers are small [Section 7.1
\cite{Damjanovic2}] we have:
\begin{proposition}\emph{(Section 6.1 \cite{Damjanovic2})}
The map $h'$ is a homeomorphism and hence provides a topological
conjugacy between $\alpha_S$ and $\alpha_{0,\widetilde{S}}$.
\end{proposition}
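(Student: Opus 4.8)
The plan is to promote the continuous semiconjugacy $h'$ to a genuine conjugacy by establishing injectivity; continuity of $h'$ is immediate from continuity of $H$, surjectivity has already been noted (a continuous self-map of the closed manifold $X$ that is $C^0$-close to the identity is onto), and the intertwining relation $h'(\alpha_S(a,x))=\alpha_{0,\widetilde S}(a,h'(x))$ was derived from the cocycle and cohomology equations \eqref{for:12} and \eqref{for:13}. The first reduction is that non-injectivity is confined to the neutral foliation: from $h'(x)=H(x)^{-1}x$ one sees that $h'(x)=h'(y)$ forces $y=d\cdot x$ with $d=H(y)H(x)^{-1}\in D_G$, and since $H$ is $C^0$-close to the identity in $D_G$ every such discrepancy $d$ lies in one fixed small neighbourhood $V$ of $e$ in $D_G$. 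Thus it suffices to analyze $h'$ restricted to a single neutral leaf $\mathcal{N}_0(x)\cong D_G$, equivalently the assignment $d\mapsto \phi_x(d):=H(d\cdot x)H(x)^{-1}$, and to show that $d=e$ is its only fixed point.

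The heart of the argument is to show that the non-injectivity locus is \emph{homogeneous}: that $P:=\{d\in D_G:\ h'(d\cdot x)=h'(x)\}$ is a closed subgroup of $D_G$, independent of the leaf, and that the fibres of $h'$ on each neutral leaf are exactly the cosets of $P$. This is where the two structural inputs enter. On the model side, the holonomy group of the collection of homogeneous Lyapunov (unipotent) foliations $U_1,\dots,U_r$ acting on any neutral leaf is all of $D_G$ and acts \emph{simply transitively} on that leaf; on the perturbed side, Proposition \ref{po:3} together with the Hirsch--Pugh--Shub conjugacy $\widetilde h$ gives that $h'$ carries the perturbed (H\"older) Lyapunov foliations $T_1,\dots,T_r$ leafwise onto the homogeneous ones $U_1,\dots,U_r$. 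Given $x,y$ with $h'(x)=h'(y)$, join them by a short $T$-path (local transitivity of $T$) and push it forward by $h'$: one obtains a closed $U$-cycle based at $h'(x)$ whose $U$-holonomy, read on $\mathcal{N}_0(x)$, is precisely the discrepancy $d(x,y)$. Conversely, any $d\in D_G$ is the holonomy of some $U$-cycle, which pulls back to a $T$-path realizing $d$ as a discrepancy. Simple transitivity of the homogeneous holonomy then forces the discrepancy set to be a subgroup, and $\alpha_S$-equivariance of $h'$ — using that $i_0(a)$ commutes with every $d\in D_G$ and that $\beta$ is cohomologous to the constant cocycle $g$ — shows that this subgroup is $\alpha$-invariant, hence independent of the leaf; being a set of discrepancies it is contained in $V$.

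Finally, since $P$ is a subgroup, for every $d\in P$ and every $k\in\ZZ$ we have $d^{k}\in P\subseteq V$, so all powers of $d$ are small. But $D_G$ contains no nontrivial element all of whose powers are small: for $G=SL(2n,\RR)$ one has $D_G=D\cong\RR^{2n-1}$ and a nonzero element has powers escaping to infinity; for $G=SL(2n,\CC)$ one has $D_G=DT$ with $D\cong\RR^{2n-1}$ and $T\cong\mathbb{T}^{2n-1}$, and an element with all powers in a small ball generates a closed subgroup contained in that ball, which must be trivial (the $\RR^{2n-1}$-component vanishes, and a subtorus or finite subgroup of $\mathbb{T}^{2n-1}$ inside a small ball is trivial); this is \cite[Section 7.1]{Damjanovic2}. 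Hence $P=\{e\}$, $h'$ is injective, and being a continuous bijection of the compact space $X$ it is a homeomorphism conjugating $\alpha_S$ to $\alpha_{0,\widetilde S}$; composing with $\widetilde h$ gives the topological conjugacy between $\widetilde\alpha_S$ and $\alpha_{0,\widetilde S}$.

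The step I expect to be the main obstacle is the middle one: transporting the simple transitivity of the \emph{homogeneous} $U$-holonomy across the merely H\"older conjugacy onto the perturbed $T$-foliations strongly enough to obtain the rigid coset description of the fibres of $h'$, rather than just a closeness estimate — and, in the complex case, controlling the compact factor $T\cong\mathbb{T}^{2n-1}$, where "all powers small" has to be argued through closed subgroups of a torus instead of by escape to infinity. Once the coset structure and the no-small-subgroup fact are in place the conclusion is immediate, and the smoothness upgrade of the resulting conjugacy then proceeds exactly as in the proof of Proposition \ref{po:2}: one checks $C^\infty$-regularity along the stable foliations of regular elements and invokes the bracket-generation/ellipticity principle to conclude global smoothness.
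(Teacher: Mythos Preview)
Your proposal is correct and follows essentially the same approach as the paper: the paper does not give a detailed proof but cites \cite[Section~6.1]{Damjanovic2}, having noted just before the proposition that the two key inputs are simple transitivity of the $U$-holonomy group and the fact that $D_G$ contains no nontrivial element all of whose powers are small. You have correctly unpacked exactly these two ingredients into the fibre-is-a-coset and no-small-subgroup argument; the final remark on the smoothness upgrade goes slightly beyond the statement but matches what the paper does immediately afterward.
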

Now by letting $h:=h'\widetilde{h}^{-1}$ we have
$$h\circ \widetilde{\alpha}_Sh^{-1}=\alpha_{0,\widetilde{S}}$$
thus there is a topological conjugacy between $\widetilde{\alpha}_S$
and $\alpha_{0,\widetilde{S}}$. The smoothness of this homeomorphism
follows as in \cite{Spatzier}, \cite{Damjanovic2} or
\cite{Margulis2}, by the general Katok-Spatzier theory of
non-stationary normal forms for partially hyperbolic abelian
actions.
\section{Proof of Theorem \ref{th:3}}\label{sec:6}
\subsection{Basic settings in $Sp(2n,\RR)$} Let $Q$ be a non-degenerate standard skew-symmetric
bilinear form on $\RR^{2n}$. Take $Q$ to be the bilinear form given,
in terms of a basis $e_1,\dots,e_{2n}$ for $\RR^{2n}$, by
$Q(e_i,e_{i+n})=1$, $Q(e_{i+n},e_{i})=-1$ and $Q(e_i,e_j)=0$
otherwise.

Using  this base,  the  Lie algebra $sp(2n,\RR)$ of $Sp(2n,\RR)$ can
be represented  as $2n\times 2n$ matrices
$$ \begin{pmatrix}A_1 & A_2 \\
A_3 & A_4 \\
 \end{pmatrix},$$
  where $A_1, A_2, A_3, A_4$ are $n\times n$ matrices satisfying $A_1=-A^{\tau}_4$
and $A_2$ and $A_3$ are symmetric.

We denote by $S$ the set of $2n\times 2n$ diagonal matrices in
$Sp(2n,\RR)$ with positive entries. Let $\Phi$ be the root system
with respect to $S$. The roots are $\pm L_i \pm L_j(i<j\leq n) $ and
$\pm 2L_i(1\leq i\leq n)$. The set of positive roots $\Phi^{+}$ and
the corresponding set of
 simple roots $\Delta$ are
 \begin{align*}
&\Phi^{+}=\{L_i-L_j\}_{i<j}\cup\{L_i+L_j\}_{i<j}\cup\{2L_i\}_{i},\\
&\Delta=\{L_i-L_{i+1}\}_i\cup\{2L_n\}.
 \end{align*}
Let $1\leq i,j\leq n, i\neq j$ be two distinct indices and let
$\exp$ be the exponentiation map for matrices.

The corresponding root spaces are
\begin{align*}
 &\mathfrak{g}_{L_i+L_j}=\RR(e_{i,j+n}+e_{j,i+n})_{i<j}, \qquad
 \mathfrak{g}_{L_i-L_j}=\RR(e_{i,j}-e_{j+n,i+n})_{i\neq j},\\
 &\mathfrak{g}_{-L_i-L_j}=\RR(e_{j+n,i}+e_{i+n,j})_{i<j},\qquad \mathfrak{g}_{2L_i}=\RR
 e_{i,i+n},\\
 &\mathfrak{g}_{-2L_i}=\RR e_{i+n,n}.
\end{align*}
Let
\begin{align*}
&f_{L_i+L_j}=(e_{i,j+n}+e_{j,i+n})_{i<j},\qquad
f_{L_i-L_j}=(e_{i,j}-e_{j+n,i+n})_{i\neq j},\\
&f_{-L_i-L_j}=(e_{j+n,i}+e_{i+n,j})_{i<j},\qquad
f_{2L_i}=e_{i,i+n},\\
&f_{-2L_i}=e_{i+n,n}.
\end{align*}

For $t\in \RR$ we write
\begin{align*}
&x_r(t)=\exp (tf_{r})\in U^{r}_\RR\qquad\text{ for }r\in\Phi.
\end{align*}
Let
\begin{align*}
w_{r}(t)=x_{r}(t)x_{-r}(-t^{-1})x_{r}(t),\qquad t\in \RR^*
\end{align*}
where
\begin{align*}
x_i=x_{r}(t)\forall i,\qquad y_i=x_{-r}(-t^{-1})\forall i.
\end{align*}
Correspondingly, we define
\begin{align*}
&h_r(t)=w_{r}(t)w_{r}(1)^{-1},\qquad t\in\RR^*,r\in\Phi.
\end{align*}

\subsection{Relations in universal central extension}\label{sec:4}
For $\gamma, \beta\in \Phi$ such that $\gamma\neq -\beta$, it is
known that
$$[x_\gamma, x_\beta]\subset\prod_{\chi=i\gamma+j\beta, i,j\geq 1}x_\chi.$$
This clearly gives rise to numbers $g_{ijpr}$ satisfying
\begin{align}
& x_r(a)x_r(b)=x_r (a+b)\label{for:14} \\
 &[x_{r} (a), x_p (b)]=\prod_{ir+jp\in
\Phi, i,j>0}
x_{ir+jp}(g_{ijpr}a^ib^j), r+p\neq 0,\\
&[x_{r} (a), x_p (b)]=\textrm{id}, \qquad 0\neq r+p\notin
\Phi.\label{for:15}
\end{align}
If $\widetilde{G}$ is the group defined by relations
\eqref{for:14}--\eqref{for:15}, and if $\pi_1$ is the natural
homomorphism from $\widetilde{G}$ to $Sp(2n,\RR)$, then $(\pi_1,
\widetilde{G})$ is a universal central extension of $Sp(2n,\RR)$.
(For the proof of this and other elementary properties of a
universal central extension, one may refer to [\cite{Steinberg2},
Section 7].)
 We write for
$x_\rho(t)$, the corresponding element in $\widetilde{G}$ by
$\tilde{x}_\rho(t)$. Then $\tilde{w}_\rho(u)$, $\tilde{h}_\rho(u)$,
$u\in\RR^*$ are obviously defined elements of $\widetilde{G}$.
\begin{lemma}\label{le:22}
If $a,t_1\in\RR^*$,  the following hold in $\widetilde{G}$(and hence
in $G$ too).

\begin{itemize}
\item[1]$\tilde{w}_{2L_n}(a)\tilde{w}_{L_{n-1}-L_n}(t_1)\tilde{w}_{2L_n}(a)^{-1}=\tilde{w}_{L_{n-1}+L_n}\left(-at_1\right)$,\\

\item[2]$\tilde{w}_{2L_n}(a)\tilde{w}_{L_{n-1}+L_n}(t_1)\tilde{w}_{2L_n}(a)^{-1}=\tilde{w}_{L_{n-1}-L_n}\left(a^{-1}t_1\right)$,\\

\item[3]$\tilde{w}_{L_{n-1}-L_n}(t_1)\tilde{w}_{2L_n}(a)\tilde{w}_{L_{n-1}-L_n}(t_1)^{-1}=\tilde{w}_{2L_{n-1}}(at_1^2)$,\\

\item[4]$\tilde{w}_{L_{n-1}-L_n}(t_1)\tilde{w}_{2L_{n-1}}(a)\tilde{w}_{L_{n-1}-L_n}(t_1)^{-1}=\tilde{w}_{2L_{n}}\left(at_1^{-2}\right)$.
\end{itemize}
Hence,
\begin{itemize}
\item[5]$\tilde{h}_{L_{n-1}-L_n}(t_1)\tilde{w}_{2L_n}(a)\tilde{h}_{L_{n-1}-L_n}(t_1)^{-1}=\tilde{w}_{2L_{n}}(at_1^{-2})$,\\

\item[6]$\tilde{w}_{2L_n}(a)\tilde{h}_{L_{n-1}-L_n}(t_1)\tilde{w}_{2L_n}(a)^{-1}\\
=\tilde{h}_{L_{n-1}+L_n}(-at_1)\tilde{h}_{L_{n-1}+L_n}(-a)^{-1}$.\\
\end{itemize}
\end{lemma}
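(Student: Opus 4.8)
The plan is to prove all six identities by direct computation inside the Steinberg group $\widetilde{G}$, reducing everything to manipulations of the defining relations \eqref{for:14}--\eqref{for:15} together with the elementary Weyl-element calculus. The key structural fact I would use throughout is the conjugation action of $\tilde{w}_\gamma(u)$ on root groups: for roots $\gamma,\beta$ one has $\tilde{w}_\gamma(u)\tilde{x}_\beta(t)\tilde{w}_\gamma(u)^{-1}=\tilde{x}_{s_\gamma\beta}(c(\gamma,\beta,u)\,t)$ for an explicit scalar $c(\gamma,\beta,u)$ depending only on the $C_n$ root datum and the normalization of structure constants, and correspondingly $\tilde{w}_\gamma(u)\tilde{w}_\beta(t)\tilde{w}_\gamma(u)^{-1}=\tilde{w}_{s_\gamma\beta}(c(\gamma,\beta,u)\,t)$. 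Since $s_{2L_n}$ interchanges $L_{n-1}-L_n$ and $L_{n-1}+L_n$, and $s_{L_{n-1}-L_n}$ sends $2L_n\mapsto 2L_{n-1}$, items 1--4 are exactly these conjugation formulas for the appropriate pairs of roots, and the only content is pinning down the scalars $-at_1$, $a^{-1}t_1$, $at_1^2$, $at_1^{-2}$. I would verify these scalars once and for all by carrying out the computation in the rank-two subgroup generated by the root groups for $\pm(L_{n-1}-L_n)$, $\pm(L_{n-1}+L_n)$, $\pm 2L_{n-1}$, $\pm 2L_n$, which is a copy of $\widetilde{Sp}(4,\RR)$; there the relevant Chevalley commutator constants can be read off from the explicit matrices $f_r$ in the basic settings section.

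For item 5, I would write $\tilde{h}_{L_{n-1}-L_n}(t_1)=\tilde{w}_{L_{n-1}-L_n}(t_1)\tilde{w}_{L_{n-1}-L_n}(1)^{-1}$ and conjugate $\tilde{w}_{2L_n}(a)$ in two steps. First apply item 4 with parameter $1$ in reverse to handle the $\tilde{w}_{L_{n-1}-L_n}(1)^{-1}$ factor; then apply item 4 again with parameter $t_1$. Composing, $\tilde{w}_{2L_n}(a)$ is first sent to $\tilde{w}_{2L_{n-1}}(a)$ and then back to $\tilde{w}_{2L_n}(at_1^{-2})$, giving the claimed formula. (One must be a little careful that conjugation by $\tilde{h}_{L_{n-1}-L_n}(t_1)$ means conjugation by $\tilde{w}_{L_{n-1}-L_n}(t_1)$ followed by conjugation by $\tilde{w}_{L_{n-1}-L_n}(1)^{-1}$, in that order, and that items 3 and 4 are set up so that these compose correctly; this is where I expect the sign/parameter bookkeeping to be most delicate.)

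For item 6 I would similarly expand $\tilde{h}_{L_{n-1}-L_n}(t_1)=\tilde{w}_{L_{n-1}-L_n}(t_1)\tilde{w}_{L_{n-1}-L_n}(1)^{-1}$ and conjugate each factor by $\tilde{w}_{2L_n}(a)$ using item 1: $\tilde{w}_{L_{n-1}-L_n}(t_1)\mapsto \tilde{w}_{L_{n-1}+L_n}(-at_1)$ and $\tilde{w}_{L_{n-1}-L_n}(1)\mapsto \tilde{w}_{L_{n-1}+L_n}(-a)$, so the conjugate is $\tilde{w}_{L_{n-1}+L_n}(-at_1)\tilde{w}_{L_{n-1}+L_n}(-a)^{-1}$, which is by definition $\tilde{h}_{L_{n-1}+L_n}(-at_1)\tilde{h}_{L_{n-1}+L_n}(-a)^{-1}$ after inserting and cancelling a $\tilde{w}_{L_{n-1}+L_n}(1)^{\pm1}$; more precisely $\tilde{h}_\beta(u)\tilde{h}_\beta(v)^{-1}=\tilde{w}_\beta(u)\tilde{w}_\beta(1)^{-1}\tilde{w}_\beta(1)\tilde{w}_\beta(v)^{-1}=\tilde{w}_\beta(u)\tilde{w}_\beta(v)^{-1}$, so the identity is immediate once item 1 is in hand.

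The main obstacle is not conceptual but bookkeeping: fixing a consistent normalization of the $f_r$ and hence of the structure constants $g_{ijpr}$ so that the scalars in items 1--4 come out with exactly the stated signs and powers, and then tracking the order of composition of conjugations in items 5 and 6. Everything reduces to the rank-two (type $C_2$) computation, so I would do that computation carefully once, record the four conjugation scalars, and then items 5 and 6 follow by two lines of formal substitution each. I do not anticipate needing any input beyond the defining relations \eqref{for:14}--\eqref{for:15}, the definitions of $\tilde w_r,\tilde h_r$, and the explicit root vectors listed in the basic settings.
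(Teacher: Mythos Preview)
Your proposal is correct and is essentially the same approach as the paper's. The paper's proof is extremely terse---it simply says that items 1--4 follow by computations using Steinberg's conjugation formula (the reference is \cite[p.81]{Steinberg2}, i.e.\ exactly the Weyl-element calculus $\tilde{w}_\gamma(u)\tilde{x}_\beta(t)\tilde{w}_\gamma(u)^{-1}=\tilde{x}_{s_\gamma\beta}(c\,t)$ you describe) and that items 5 and 6 are ``nothing but (1) and (3) applied twice''; your plan to expand $\tilde{h}_{L_{n-1}-L_n}(t_1)=\tilde{w}_{L_{n-1}-L_n}(t_1)\tilde{w}_{L_{n-1}-L_n}(1)^{-1}$ and conjugate factorwise is exactly this, and your derivation of item 6 from item 1 (inserting and cancelling $\tilde{w}_{L_{n-1}+L_n}(1)^{\pm1}$) matches the paper's intent precisely.
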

\begin{proof} Formulae (1) to (4) are proved easily by computations using
\cite[p.81]{Steinberg2}. (5) and (6) are nothing but (1) and (3)
applied twice.
\end{proof}
We use diag$(a_{k_1},\cdots,a_{k_i})$ to denote the $2n\times 2n$
diangonal matrix with $k_{j}$-th diagonal element $a_{k_{j}}$ and
remaining diagonal elements $1$.
\begin{lemma}\label{le:4} For $\gamma\in\Phi$,
denote by $\tilde{H}_{\gamma}$ the subgroup generated by
$\tilde{h}_{\gamma}(t)(t\in\RR^*)$, Let $\tilde{H}$ be he subgroup
generated by $\{\tilde{H}_{\alpha}, \alpha\in\Phi\}$.
\begin{align*}
&(1) \ker(\pi_1)\cap \tilde{H}_{L_1-L_2}=\{\prod_i
\tilde{h}_{L_1-L_2}(t_i)\mid \text{ \emph{with} }\prod_i t_i=1\}.\\
&(2)  \ker(\pi_1)\cap \tilde{H}_{r}=\ker(\pi_1)\cap
\tilde{H}_{L_1-L_2}, \qquad \text{ \emph{for} }r=\pm L_i\pm
L_j(i\neq j).\\
&(3)
\ker(\pi_1)=\prod_{\gamma\in\Delta}\bigl(\ker(\pi_1)\cap\tilde{H}_{\gamma}\bigl).
\end{align*}
\end{lemma}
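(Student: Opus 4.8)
The plan is to reduce everything to the rank-one calculation already encoded in relation \eqref{for:6}/\eqref{for:11} and Lemma \ref{le:22}, exploiting the fact that in a universal central extension the kernel $\ker(\pi_1)$ is central. For part (1), first recall the standard Steinberg identities for $\tilde{w}_{L_1-L_2}$ conjugating $\tilde{x}_{\pm(L_1-L_2)}$ (from \cite[p.~81]{Steinberg2}): these show that the subgroup $\tilde{H}_{L_1-L_2}$ is abelian modulo $\ker(\pi_1)$, that $t\mapsto \tilde{h}_{L_1-L_2}(t)\bmod\ker(\pi_1)$ is the homomorphism $t\mapsto\diag(t,t^{-1})$ into the torus, and that consequently a product $\prod_i\tilde{h}_{L_1-L_2}(t_i)$ lies in $\ker(\pi_1)$ exactly when $\prod_i t_i=1$. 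The content is that \emph{every} element of $\ker(\pi_1)\cap\tilde H_{L_1-L_2}$ has this form; this is where one needs the explicit cocycle description of the universal central extension of $\SL(2,\RR)$ (equivalently, the relation \eqref{for:6} says the Steinberg symbol $c(s,t)=\tilde h(st)\tilde h(s)^{-1}\tilde h(t)^{-1}$ is trivial here, so $\tilde H_{L_1-L_2}$ is genuinely abelian and maps onto the torus with kernel exactly the stated set). I would cite \cite{Steinberg2,Matsumoto} for the normal form of elements of $\tilde H_{L_1-L_2}$ and then read off the kernel.

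For part (2), the idea is that any two roots $r, r'$ of the same ``length type'' are conjugate by an element of the Weyl group realized inside $\widetilde{G}$ by a product of $\tilde w$'s, and such conjugation carries $\tilde H_r$ isomorphically onto $\tilde H_{r'}$ while fixing $\ker(\pi_1)$ pointwise (centrality). For the long roots $2L_i$ versus the short roots $L_i-L_j$ one cannot use a Weyl element, but here Lemma \ref{le:22}(5)--(6) does the job: conjugation by $\tilde h_{L_{n-1}-L_n}(t_1)$ sends $\tilde w_{2L_n}(a)$ to $\tilde w_{2L_n}(at_1^{-2})$ and sends $\tilde h_{L_{n-1}-L_n}(t_1)$ into $\tilde H_{L_{n-1}+L_n}$ up to a correction term, which lets me compare $\ker(\pi_1)\cap\tilde H_{2L_n}$ with $\ker(\pi_1)\cap\tilde H_{L_{n-1}-L_n}$; chaining these identities across all simple roots gives the common value for every $r=\pm L_i\pm L_j$. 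Concretely I would: (i) fix $r=L_1-L_2$ as the reference; (ii) for each other $r'=\pm L_i\pm L_j$ produce an explicit $g\in\widetilde G$ (a product of $\tilde w$'s from Lemma \ref{le:22} and the obvious permutation Weyl elements) with $g\tilde H_r g^{-1}=\tilde H_{r'}$; (iii) conclude $g(\ker\pi_1\cap\tilde H_r)g^{-1}=\ker\pi_1\cap\tilde H_{r'}$ and use centrality of $\ker\pi_1$ to drop the conjugation.

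For part (3), I would argue that $\tilde H$ is generated by the $\tilde H_\gamma$, $\gamma\in\Delta$ (the simple roots), since Lemma \ref{le:22}(1)--(4) and the analogous $\SL$-type relations let one rewrite every $\tilde h_r(t)$, $r\in\Phi$, as a product of $\tilde h_\gamma$'s with $\gamma$ simple; then, modulo $\ker(\pi_1)$, $\tilde H$ maps isomorphically onto the maximal torus, which is a direct product of the one-parameter subgroups indexed by $\Delta$. Lifting this decomposition and using that each $\tilde h_\gamma$ commutes with $\tilde h_{\gamma'}$ modulo the central $\ker(\pi_1)$, one gets $\tilde H=\bigl(\prod_{\gamma\in\Delta}\tilde H_\gamma\bigr)\cdot\ker(\pi_1)$, and intersecting with $\ker(\pi_1)$ and invoking parts (1)--(2) to identify each $\ker(\pi_1)\cap\tilde H_\gamma$ yields the claimed product formula. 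I expect the main obstacle to be part (3): one must check that the naive product map $\prod_{\gamma\in\Delta}(\ker\pi_1\cap\tilde H_\gamma)\to\ker\pi_1$ is actually surjective with no further relations, i.e. that no nontrivial central element is ``lost'' when passing from $\tilde H$ to the product of the simple-root pieces. This requires knowing the structure of the Schur multiplier of $Sp(2n,\RR)$ well enough — precisely the point flagged after Theorem \ref{th:1} that \cite{Matsumoto} must be supplemented by computations on the Schur multiplier — and handling the length-changing relations \eqref{for:22}/Lemma \ref{le:22}(5)--(6) carefully so that the long-root contribution $\ker\pi_1\cap\tilde H_{2L_n}$ is correctly absorbed.
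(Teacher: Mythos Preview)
There are two genuine gaps. In part (1) you invoke relation \eqref{for:6} to conclude that the Steinberg symbol $c(s,t)=\tilde h_{L_1-L_2}(st)\tilde h_{L_1-L_2}(s)^{-1}\tilde h_{L_1-L_2}(t)^{-1}$ is trivial in $\widetilde G$; this is backwards. Relation \eqref{for:6} holds in $Sp(2n,\RR)$ but \emph{not} in $\widetilde G$ --- Theorem \ref{th:3} says precisely that imposing \eqref{for:6} and \eqref{for:22} on the Steinberg group is what cuts $\widetilde G$ down to $Sp(2n,\RR)$. The set on the right of (1) is the group generated by the (generally nontrivial) symbols $\{s,t\}$, so your sentence is also internally inconsistent: if the symbol were trivial the kernel would be trivial, not ``the stated set''. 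The paper's proof of (1) is the one-line computation $\pi_1\bigl(\tilde h_{L_1-L_2}(t)\bigr)=\diag\bigl(t_1,(t^{-1})_2,(t^{-1})_{1+n},t_{2+n}\bigr)$, from which a product $\prod_i\tilde h_{L_1-L_2}(t_i)$ lies in $\ker\pi_1$ iff $\prod_i t_i=1$. Note also that statement (2) concerns only the short roots $\pm L_i\pm L_j$ with $i\neq j$, which form a single Weyl orbit in type $C_n$; the Weyl-conjugation argument you describe (the paper cites \cite[Lemma 8.2]{Moore}) suffices, and your long-root discussion via Lemma \ref{le:22}(5)--(6) belongs to Lemma \ref{le:5}, not here.

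For part (3) you are missing the decisive ingredient that resolves exactly the obstacle you flag. The paper's argument is: (a) $\ker\pi_1\subseteq\tilde H$ by \cite[p.~40]{Steinberg2}; (b) $\tilde H=\prod_{\gamma\in\Delta}\tilde H_\gamma$ by the method of \cite[7.7]{Steinberg}; and crucially (c) the \emph{algebraic simple connectedness} of $Sp(2n,\CC)$ \cite[p.~24]{Deodhar}, which means the simple coroots freely parametrize the maximal torus, so the images $\pi_1(\tilde H_\gamma)$ for distinct $\gamma\in\Delta$ meet only in the identity. Hence if $h=\prod_{\gamma\in\Delta}h_\gamma\in\ker\pi_1$ with $h_\gamma\in\tilde H_\gamma$, then each $\pi_1(h_\gamma)=e$ separately, i.e.\ each $h_\gamma\in\ker\pi_1\cap\tilde H_\gamma$. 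This gives the surjectivity of $\prod_{\gamma\in\Delta}(\ker\pi_1\cap\tilde H_\gamma)\to\ker\pi_1$ directly; no hand computation of the Schur multiplier is needed for Lemma \ref{le:4} itself.
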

\begin{proof} (1). Notice
$\pi_1(\tilde{h}_{L_1-L_2}(t))=$diag$\left(t_1,(t^{-1})_2,(t^{-1})_{1+n},t_{2+n}\right)$.
Thus (1) is clear.

It follows from (1) that $\ker(\pi_1)\cap \tilde{H}_{L_1-L_2}$ is
generated by elements
$$\tilde{h}_{L_1-L_2}(t_1)\tilde{h}_{L_1-L_2}(t_2)\tilde{h}_{L_1-L_2}(t_1t_2)^{-1},\text{ where }t_1,t_2\in\RR^*.$$

(2) We can prove similarly that $\ker(\pi_1)\cap
\tilde{H}_{r}$($r=\pm L_i\pm L_j$) is generated by elements
$\tilde{h}_{r}(t_1)\tilde{h}_{r}(t_2)\tilde{h}_{r}(t_1t_2)^{-1}$.
Since these simple roots belong to the same orbit under the Weyl
group, an argument similar to one in [\cite{Moore}, Lemma 8.2] shows
that $\ker(\pi_1)\cap \tilde{H}_{r}\subseteq \ker(\pi_1)\cap
\tilde{H}_{L_1-L_2}$ for all roots $r=\pm L_i\pm L_j$. This proves
(2).

(3) By \cite[p.40]{Steinberg2}, we have $\ker(\pi_1)\subseteq
\tilde{H}$. Using se a method similar to that in the proof of
\cite[7.7]{Steinberg}, we have
$\tilde{H}=\prod_{\gamma\in\Delta}\tilde{H}_{\gamma}$. Using the
simple connectedness of $Sp(2n,\CC)$ over $\CC$\cite[p.24]{Deodhar},
we get (3).
\end{proof}

For $t_1,t_2\in\RR^{*}$, we define:
\begin{align*}
&\{t_1,t_2\}=\tilde{h}_{L_1-L_2}(t_1)\tilde{h}_{L_1-L_2}(t_2)\tilde{h}_{L_1-L_2}(t_1t_2)^{-1}.
\end{align*}
Now in exactly the same manner as the proof in the appendix of
\cite{Moore}, we prove that these $\{t_1,t_2\}$'s satisfy the
conditions
\begin{lemma}\label{le:24}
\begin{align*}
\{t_1, t_2\}&=\{t_2,t_1\}^{-1}\qquad \forall t_1,t_2\in \RR^*,\\
\{t_1, t_2\cdot t_3\}&=\{t_1,t_2\}\cdot\{t_1,t_3\}\qquad \forall t_1, t_2,t_3\in \RR^*,\\
\{t_1\cdot t_2,
t_3\}&=\{t_1,t_3\}\cdot\{t_2,t_3\}\qquad \forall t_1, t_2,t_3\in \RR^*,\\
\{t,1-t\}&=1\qquad \forall t\in \RR^*, t\neq 1,\\
\{t,-t\}&=1\qquad \forall t\in\RR^*.
\end{align*}
\end{lemma}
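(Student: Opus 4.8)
The plan is to follow, essentially verbatim, the classical derivation of the Steinberg symbol relations due to Matsumoto, in the form presented in the appendix of \cite{Moore} (compare also \cite{Milnor} and \cite[\S 7]{Steinberg2}), adapted to our notation. Everything takes place inside the rank--one subgroup $\widetilde{G}_\alpha\subseteq\widetilde{G}$ generated by the $\tilde{x}_{\pm(L_1-L_2)}(t)$, $t\in\RR$, where $\alpha=L_1-L_2$: the elements $\tilde{w}_\alpha(t)$, $\tilde{h}_\alpha(t)$, and hence each $\{t_1,t_2\}$, lie in $\widetilde{G}_\alpha$, and $\pi_1$ restricts on $\widetilde{G}_\alpha$ to a central extension of the copy of $\SL(2,\RR)$ generated by the $x_{\pm\alpha}$ (this is a genuine $\SL(2,\RR)$: by the proof of Lemma \ref{le:4}(1), $\pi_1(\tilde{h}_{L_1-L_2}(t))$ is the explicit diagonal matrix $\diag(t_1,(t^{-1})_2,(t^{-1})_{1+n},t_{2+n})$). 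Two of the five relations are then free: $\{t_1,t_2\}$ lies in $\ker(\pi_1)$ by Lemma \ref{le:4}(1), hence is central in $\widetilde{G}$ --- and this centrality is the engine that makes all further steps purely formal --- while $\{1,t\}=\{t,1\}=\mathrm{id}$ because $\tilde{h}_\alpha(1)=\tilde{w}_\alpha(1)\tilde{w}_\alpha(1)^{-1}=\mathrm{id}$.

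Next I would record the conjugation formulas in $\widetilde{G}$ that follow from the defining relations \eqref{for:14}--\eqref{for:15} (cf.\ \cite[pp.~30--31, 81]{Steinberg2}): from $\tilde{h}_\alpha(u)\tilde{x}_{\pm\alpha}(t)\tilde{h}_\alpha(u)^{-1}=\tilde{x}_{\pm\alpha}(u^{\pm2}t)$ one obtains $\tilde{h}_\alpha(u)\tilde{w}_\alpha(v)\tilde{h}_\alpha(u)^{-1}=\tilde{w}_\alpha(u^2v)$ and hence $\tilde{h}_\alpha(u)\tilde{h}_\alpha(v)\tilde{h}_\alpha(u)^{-1}=\tilde{h}_\alpha(u^2v)\tilde{h}_\alpha(u^2)^{-1}$, and one computes the products $\tilde{w}_\alpha(u)\tilde{w}_\alpha(v)$ and the conjugate $\tilde{w}_\alpha(u)\tilde{h}_\alpha(v)\tilde{w}_\alpha(u)^{-1}$ up to a central correction, expressing the correction through the symbols $\{\cdot,\cdot\}$ via the fact (Lemma \ref{le:4}) that $\ker(\pi_1)\cap\tilde{H}_{L_1-L_2}$ is generated precisely by them. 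With these identities the cocycle relation $\{t_1,t_2\}\{t_1t_2,t_3\}=\{t_2,t_3\}\{t_1,t_2t_3\}$ drops out of rearranging the triple product $\tilde{h}_\alpha(t_1)\tilde{h}_\alpha(t_2)\tilde{h}_\alpha(t_3)$ in two ways, and bimultiplicativity in each variable together with the skew--symmetry $\{t_1,t_2\}=\{t_2,t_1\}^{-1}$ then follows by combining it with the $\tilde{w}$--conjugation formula, exactly as in \cite{Moore}.

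The one step that is not formal is the Steinberg relation $\{t,1-t\}=\mathrm{id}$ for $t\neq 1$: it rests on a specific identity in $\SL(2,\RR)$ (equivalently, among the $\tilde{x}_{\pm\alpha}$) exhibiting a suitable product of $\tilde{x}_{\pm\alpha}(\cdot)$'s built from $t$ and $1-t$ as $\tilde{h}_\alpha(t)\tilde{h}_\alpha(1-t)\tilde{h}_\alpha(t(1-t))^{-1}$ times something that visibly collapses to the identity. Since our $\tilde{x}_{\pm\alpha}$ satisfy the defining relations of the rank--one Steinberg group, any such relation among the abstract symbols specializes to one among the $\{t_1,t_2\}$, so I would import this identity from \cite[Appendix]{Moore} (or \cite{Milnor}) unchanged. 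The final relation $\{t,-t\}=\mathrm{id}$ is then a corollary: $\{1,-1\}=\mathrm{id}$ handles $t=1$, and for $t\neq 1$ one writes $-t=(1-t)(1-t^{-1})^{-1}$, so by bimultiplicativity $\{t,-t\}=\{t,1-t\}\{t,1-t^{-1}\}^{-1}$, where $\{t,1-t\}=\mathrm{id}$ by the Steinberg relation and $\{t,1-t^{-1}\}=\{t^{-1},1-t^{-1}\}^{-1}=\mathrm{id}$ using bimultiplicativity in the first variable together with the Steinberg relation applied to $t^{-1}$.

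I expect the real work to be the bookkeeping in the second paragraph: pinning down the central corrections in $\tilde{w}_\alpha(u)\tilde{w}_\alpha(v)$ and $\tilde{w}_\alpha(u)\tilde{h}_\alpha(v)\tilde{w}_\alpha(u)^{-1}$ as definite elements of $\ker(\pi_1)\cap\tilde{H}_{L_1-L_2}$, since a careless treatment there would break skew--symmetry (which, unlike bimultiplicativity, genuinely requires the $\tilde{w}$--conjugation input). Here it is the explicit embedding of the $\alpha$--subgroup as a standard $\SL(2,\RR)$ in $Sp(2n,\RR)$ that lets one transfer the $\SL(2)$ identities to $\widetilde{G}_\alpha$ and control the corrections; nothing in this particular lemma is symplectic--specific, which is exactly why it can be quoted from \cite{Moore} with only notational changes.
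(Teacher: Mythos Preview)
Your approach is the paper's: both simply defer to the appendix of \cite{Moore}, and your outline of how that argument runs is largely faithful. One point, however, needs correction. You write that ``everything takes place inside the rank--one subgroup $\widetilde{G}_\alpha$'' and that bimultiplicativity follows from the cocycle relation combined with the $\tilde{w}_\alpha$--conjugation formula. That is not so. The purely rank--one identities (cocycle, $\{t,1-t\}=1$, $\{t,-t\}=1$, and the skew relation coming from $\tilde{w}_\alpha$--conjugation) are exactly Matsumoto's axioms for a Steinberg cocycle on $\SL_2$, and such cocycles are \emph{not} bimultiplicative in general: over $\RR$ there exist non--bimultiplicative Steinberg cocycles on $\SL_2(\RR)$. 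What forces bimultiplicativity in Moore's argument is the presence of a second root $\beta$ with $\langle\alpha,\beta^\vee\rangle=-1$ (here one may take $\beta=2L_2$, or $\beta=L_2-L_3$ when $n\geq 3$): conjugation by $\tilde{h}_\beta(u)$ sends $\tilde{h}_\alpha(v)$ to $\tilde{h}_\alpha(u^{-1}v)\tilde{h}_\alpha(u^{-1})^{-1}$, and comparing this with the centrality of $[\tilde{h}_\beta(u),\tilde{h}_\alpha(v)]$ yields the missing multiplicativity. So your final remark should read ``nothing here is specific beyond rank $\geq 2$'' rather than ``nothing is symplectic--specific,'' and the phrase about $\widetilde{G}_\alpha$ should be weakened to say that all elements in question lie there, while one conjugation step reaches outside. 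Since you are in any case invoking \cite{Moore}, which carries out precisely this higher--rank step, the gap is in the commentary rather than the mathematics.
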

Let $\tilde{H}_{0}$ denote the cyclic group generated by
$\tilde{h}_{2L_n}(-1)\tilde{h}_{2L_n}(-1)$ and $\tilde{H}_{c}$
denote the cyclic group generated by $\tilde{h}_{2L_n}(-1)$. To
prove Theorem \ref{th:3}, it is equivalent to prove:
\begin{proposition}\label{th:9}
$\ker(\pi_1)=\bigl(\ker(\pi_1)\cap \tilde{H}_{L_1-L_2}\bigl)\cdot
\tilde{H}_0$.
\end{proposition}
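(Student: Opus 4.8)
\textbf{Proof strategy for Proposition \ref{th:9}.} The plan is to reduce the computation of $\ker(\pi_1)$ to the two pieces $\ker(\pi_1)\cap\tilde H_{L_1-L_2}$ and $\tilde H_0$, using the structural information already assembled in Lemma \ref{le:4} together with the transport relations of Lemma \ref{le:22}. By Lemma \ref{le:4}(3), $\ker(\pi_1)=\prod_{\gamma\in\Delta}\bigl(\ker(\pi_1)\cap\tilde H_\gamma\bigr)$, and the simple roots are $\Delta=\{L_i-L_{i+1}\}_i\cup\{2L_n\}$. Lemma \ref{le:4}(2) already identifies $\ker(\pi_1)\cap\tilde H_{L_i-L_{i+1}}$ with $\ker(\pi_1)\cap\tilde H_{L_1-L_2}$ for every $i$, since all the short simple roots lie in one Weyl orbit. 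So the only remaining factor to understand is $\ker(\pi_1)\cap\tilde H_{2L_n}$, and the whole proposition comes down to showing
\begin{align*}
\ker(\pi_1)\cap\tilde H_{2L_n}\subseteq\bigl(\ker(\pi_1)\cap\tilde H_{L_1-L_2}\bigr)\cdot\tilde H_0.
\end{align*}

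First I would pin down $\ker(\pi_1)\cap\tilde H_{2L_n}$ explicitly. Since $\pi_1(\tilde h_{2L_n}(t))=\diag(t_n,(t^{-1})_{2n})$ and the corresponding one-parameter computation in $SL(2,\RR)$ (equivalently $Sp(2,\RR)$) is classical, the subgroup $\tilde H_{2L_n}$ maps onto the diagonal torus of a rank-one $Sp(2,\RR)$-factor, and its intersection with $\ker(\pi_1)$ is generated by the Steinberg-type symbols $\tilde h_{2L_n}(t_1)\tilde h_{2L_n}(t_2)\tilde h_{2L_n}(t_1t_2)^{-1}$ together with $\tilde h_{2L_n}(-1)^2$; the point is that for the \emph{long} root $2L_n$ the symbol takes values in $\ZZ/2$ rather than in the full $K_2$-type group, because $Sp(2,\RR)\cong SL(2,\RR)$ and the relevant Steinberg symbol $\{t_1,t_2\}_{2L_n}$ is the $c$-symbol of $SL(2,\RR)$ which is $2$-torsion (it depends only on the signs of $t_1,t_2$). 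Concretely I expect $\ker(\pi_1)\cap\tilde H_{2L_n}$ to be generated by $\tilde h_{2L_n}(-1)^2$, i.e.\ to equal $\tilde H_0$ up to elements already in $\ker(\pi_1)\cap\tilde H_{L_1-L_2}$. This requires carefully tracking the Schur multiplier of the rank-one long-root $SL(2)$ sitting inside $Sp(2n,\RR)$ and comparing its generator with the symbols $\{t_1,t_2\}$ of Lemma \ref{le:24}; the comparison is supplied by the conjugation formulae (5) and (6) of Lemma \ref{le:22}, which relate $\tilde w_{2L_n}$ and $\tilde w_{2L_{n-1}}$ through $\tilde h_{L_{n-1}-L_n}$ and hence tie the long-root symbols to the short-root symbols $\{t_1,t_2\}$.

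The second step is the reduction using Lemma \ref{le:22}(5)--(6). Formula (5), $\tilde h_{L_{n-1}-L_n}(t_1)\tilde w_{2L_n}(a)\tilde h_{L_{n-1}-L_n}(t_1)^{-1}=\tilde w_{2L_n}(at_1^{-2})$, shows that conjugating a long-root generator by a short-root torus element only rescales the parameter by a square, so modulo $\ker(\pi_1)\cap\tilde H_{L_1-L_2}$ every element of $\tilde H_{2L_n}$ can be normalized to have parameter $\pm 1$; the nontrivial residue is then captured by $\tilde h_{2L_n}(-1)^2\in\tilde H_0$. Formula (6) lets me express the "error" in commuting $\tilde w_{2L_n}(a)$ past $\tilde h_{L_{n-1}-L_n}(t_1)$ as a product of $\tilde h_{L_{n-1}+L_n}$-symbols, and $L_{n-1}+L_n$ is again a short root in the same Weyl orbit as $L_1-L_2$, so that error lies in $\ker(\pi_1)\cap\tilde H_{L_1-L_2}$ by Lemma \ref{le:4}(2). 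Assembling: $\ker(\pi_1)=\prod_{\gamma\in\Delta}(\ker(\pi_1)\cap\tilde H_\gamma)=(\ker(\pi_1)\cap\tilde H_{L_1-L_2})\cdot(\ker(\pi_1)\cap\tilde H_{2L_n})\subseteq(\ker(\pi_1)\cap\tilde H_{L_1-L_2})\cdot\tilde H_0$, and the reverse inclusion is immediate since $\tilde h_{2L_n}(-1)^2\in\ker(\pi_1)$.

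\textbf{The main obstacle.} The delicate point is \emph{not} the group-theoretic bookkeeping but the precise determination that the long-root symbol $\{t_1,t_2\}_{2L_n}$ contributes nothing new beyond its $2$-torsion part $\tilde h_{2L_n}(-1)^2$. Over $\RR$, the Schur multiplier of $Sp(2n,\RR)$ (equivalently $\pi_1$, which is $\ZZ$) is built from the topological $K_2$ of $\RR$, and one must verify that the long simple root $2L_n$ and the short simple roots $L_i-L_{i+1}$ together generate exactly one copy of this $\ZZ$, with the long-root piece being "absorbed" into a short-root piece after multiplication by the discrete generator $\tilde h_{2L_n}(-1)^2$. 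This is where the additional calculations on the Schur multiplier alluded to in the remark after Theorem \ref{th:1} are needed: one has to check that no extra relations or extra central elements are produced by the long root, using simple connectedness of $Sp(2n,\CC)$ (as in Lemma \ref{le:4}(3)) to rule out a bigger kernel, and then descend to $\RR$. I expect this comparison of the long-root and short-root symbols — essentially identifying the image of $\tilde H_{2L_n}\cap\ker\pi_1$ inside $\ker\pi_1/(\ker\pi_1\cap\tilde H_{L_1-L_2})$ as the cyclic group $\tilde H_0$ — to be the technical heart of the argument.
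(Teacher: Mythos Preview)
Your proposal is correct and follows essentially the same route as the paper: decompose $\ker(\pi_1)$ over the simple roots via Lemma \ref{le:4}(3), absorb the short-root factors using Lemma \ref{le:4}(2), and reduce the long-root piece $\ker(\pi_1)\cap\tilde H_{2L_n}$ into $(\ker(\pi_1)\cap\tilde H_{L_1-L_2})\cdot\tilde H_0$ by using Lemma \ref{le:22}(5)--(6) to normalize the parameter of $\tilde h_{2L_n}(t)$ to $\pm 1$ modulo short-root symbols. The paper packages exactly this last step as Lemma \ref{le:5}: writing $z^2=|t|$ and conjugating $\tilde w_{2L_n}(t)$ by $\tilde h_{L_{n-1}-L_n}(z^{-1})$ reduces to $\tilde w_{2L_n}(\pm 1)$, and then Lemma \ref{le:22}(6) converts the remaining expression into $\tilde h_{L_{n-1}\pm L_n}$ elements times at most one factor of $\tilde h_{2L_n}(-1)$; the only extra ingredient is the short identity $(\tilde h_{L_{n-1}-L_n}(-1))^2=e$ obtained from the special cases $t=\pm 1$, which you will need when checking the case $t_1=t_2=-1$. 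Your ``main obstacle'' paragraph overstates the difficulty: no abstract Schur-multiplier or $K_2$ comparison is required, only this direct two-line computation.
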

The proof of this proposition relies on the following results.
\begin{lemma}\label{le:5}
\begin{itemize}
\item[(i)]$\tilde{H}_{2L_n}\subseteq \tilde{H}_{L_{n-1}-L_n}\cdot
\tilde{H}_{L_{n-1}+L_n}\cdot \tilde{H}_c$.

\item[(ii)]$\ker(\pi_1)\cap\tilde{H}_{2L_n}\subseteq(\ker(\pi_1)\cap\tilde{H}_{L_{1}-L_2})\cdot\tilde{H}_0$.
\end{itemize}
\end{lemma}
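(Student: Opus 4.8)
The strategy is to push everything concerning the long root $2L_n$ onto the two orthogonal short roots $L_{n-1}\pm L_n$ by means of Lemma \ref{le:22}, and then to invoke Lemma \ref{le:4} and the Matsumoto relations of Lemma \ref{le:24} for the resulting short-root Steinberg symbols.

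For part (i) I would first record two explicit identities, valid for every $t_1\in\RR^{*}$. Writing $\tilde{h}_{2L_n}(t_1^{-2})=\tilde{w}_{2L_n}(t_1^{-2})\tilde{w}_{2L_n}(1)^{-1}$, substituting Lemma \ref{le:22}(5) with $a=1$, and then applying Lemma \ref{le:22}(6) with $a=1$ to the conjugate of $\tilde{h}_{L_{n-1}-L_n}(t_1)^{-1}$ that appears, one gets
\begin{align*}
\tilde{h}_{2L_n}(t_1^{-2})=\tilde{h}_{L_{n-1}-L_n}(t_1)\,\tilde{h}_{L_{n-1}+L_n}(-1)\,\tilde{h}_{L_{n-1}+L_n}(-t_1)^{-1}.
\end{align*}
Running the same computation with $a=-1$ and using $\tilde{w}_{2L_n}(-1)=\tilde{h}_{2L_n}(-1)\tilde{w}_{2L_n}(1)$ gives
\begin{align*}
\tilde{h}_{2L_n}(-t_1^{-2})=\tilde{h}_{L_{n-1}-L_n}(t_1)\,\tilde{h}_{2L_n}(-1)\,\tilde{h}_{L_{n-1}+L_n}(-1)\,\tilde{h}_{L_{n-1}+L_n}(-t_1)^{-1}.
\end{align*}
Since every element of $\RR^{*}$ is $\pm$ a square, these two identities already realize every generator $\tilde{h}_{2L_n}(t)$ of $\tilde{H}_{2L_n}$ as an element of $\tilde{H}_{L_{n-1}-L_n}\cdot\tilde{H}_{L_{n-1}+L_n}\cdot\tilde{H}_{c}$. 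To promote this to an inclusion of subgroups I would use that $\pi_1$ carries $\tilde{H}_{L_{n-1}-L_n}$, $\tilde{H}_{L_{n-1}+L_n}$ and $\tilde{H}_{c}$ into the diagonal torus, so any two of them commute modulo $\ker(\pi_1)$, and that the central corrections produced when reordering are, by the Steinberg commutator formula for the $\tilde{h}$'s \cite{Steinberg2}, short-root Steinberg symbols lying inside $\tilde{H}_{L_{n-1}-L_n}$ or $\tilde{H}_{L_{n-1}+L_n}$. Hence the set $\tilde{H}_{L_{n-1}-L_n}\cdot\tilde{H}_{L_{n-1}+L_n}\cdot\tilde{H}_{c}$ is a subgroup containing all generators of $\tilde{H}_{2L_n}$, which is (i).

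For part (ii), exactly as in Lemma \ref{le:4}(1) --- using $\pi_1(\tilde{h}_{2L_n}(t))=\diag(t_n,(t^{-1})_{2n})$ --- the group $\ker(\pi_1)\cap\tilde{H}_{2L_n}$ is generated by the symbols $\{t_1,t_2\}_{2L_n}:=\tilde{h}_{2L_n}(t_1)\tilde{h}_{2L_n}(t_2)\tilde{h}_{2L_n}(t_1t_2)^{-1}$, $t_1,t_2\in\RR^{*}$, so it suffices to place each such symbol in $(\ker(\pi_1)\cap\tilde{H}_{L_1-L_2})\cdot\tilde{H}_0$. I would substitute the two identities of (i) into each of the three factors and normalize the resulting word, always sweeping the $\tilde{h}_{2L_n}(-1)$'s to the far right: the $\tilde{h}_{L_{n-1}-L_n}$-factors commute exactly with the $\tilde{h}_{L_{n-1}+L_n}$-factors (the roots are orthogonal), and moving each $\tilde{h}_{2L_n}(-1)$ past a short-root factor costs only a short-root symbol, so the word takes the form
\begin{align*}
(\text{product of short-root symbols})\cdot\tilde{h}_{L_{n-1}-L_n}(P_{-})\cdot\tilde{h}_{L_{n-1}+L_n}(P_{+})\cdot\tilde{h}_{2L_n}(-1)^{k}.
\end{align*}
Reading off the arguments delivered by the identities of (i), $P_{-}=|t_1|^{-1/2}|t_2|^{-1/2}|t_1t_2|^{1/2}=1$ and $P_{+}=|t_1|^{1/2}|t_2|^{1/2}|t_1t_2|^{-1/2}=1$, so the two middle factors vanish; and the signed count $k$ of occurrences of $\tilde{h}_{2L_n}(-1)$ equals $2$ when both $t_1,t_2$ are negative and $0$ otherwise, so $\tilde{h}_{2L_n}(-1)^{k}\in\tilde{H}_0$. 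The remaining short-root symbols all lie in $(\ker(\pi_1)\cap\tilde{H}_{L_{n-1}-L_n})\cdot(\ker(\pi_1)\cap\tilde{H}_{L_{n-1}+L_n})$, which by Lemma \ref{le:4}(2) equals $\ker(\pi_1)\cap\tilde{H}_{L_1-L_2}$. Thus $\{t_1,t_2\}_{2L_n}\in(\ker(\pi_1)\cap\tilde{H}_{L_1-L_2})\cdot\tilde{H}_0$, which gives (ii) (and, combined with Lemma \ref{le:4}, Proposition \ref{th:9}).

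The hard part will be the normalization bookkeeping in (ii). One has to use the precise Steinberg commutator formula --- $\tilde{h}_{\gamma}(s)\tilde{h}_{\delta}(u)\tilde{h}_{\gamma}(s)^{-1}\tilde{h}_{\delta}(u)^{-1}$ is a power of the Steinberg symbol of the root $\delta$, with exponent $\langle\delta,\gamma^{\vee}\rangle$ --- and to be disciplined about always moving the $2L_n$-element through a short-root one (never two $2L_n$-elements past each other), so that every stray central term is genuinely a \emph{short}-root symbol, hence, by Lemma \ref{le:4}(2) and Lemma \ref{le:24}, already inside $\ker(\pi_1)\cap\tilde{H}_{L_1-L_2}$; the only surviving long-root contribution is then $\tilde{h}_{2L_n}(-1)^{k}$ with $k$ even, which is absorbed by $\tilde{H}_0$. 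The two facts that make the reduction close up are that the short-root arguments produced by (i) multiply to $1$ and that $\langle L_{n-1}-L_n,(L_{n-1}+L_n)^{\vee}\rangle=0$.
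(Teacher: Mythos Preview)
Your approach to (i) is exactly the paper's: the two displayed identities you obtain from Lemma~\ref{le:22}(5),(6) are precisely the paper's ``$t>0$'' and ``$t<0$'' expressions for $\tilde{h}_{2L_n}(t)$ (your second identity becomes the paper's after pushing $\tilde{h}_{2L_n}(-1)$ to the right). The paper, however, also records the special case $t_1=-1$ of your first identity, namely $e=\tilde{h}_{L_{n-1}-L_n}(-1)\tilde{h}_{L_{n-1}+L_n}(-1)$, and this is what drives its proof of (ii).

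For (ii) you take a genuinely different, more computational route. You reduce to the generators $\{t_1,t_2\}_{2L_n}$ and then substitute the identities of (i) term by term, carefully commuting all factors into a normal form and checking that the short-root arguments cancel and the $\tilde{h}_{2L_n}(-1)$--count is even. The paper instead argues once and for all: by (i) and Lemma~\ref{le:4}, \emph{every} $h\in\tilde{H}_{2L_n}$ can be put in the form $\tilde{h}_{L_{n-1}-L_n}(t_1)\tilde{h}_{L_{n-1}+L_n}(t_2)\,h_1\,h_2$ with $h_1\in\ker(\pi_1)\cap\tilde{H}_{L_1-L_2}$ and $h_2\in\tilde{H}_c$; applying $\pi_1$ to this normal form forces $t_1=t_2=\pm1$ and $\pi_1(h_2)=e$, and the case $t_1=t_2=-1$ is killed by the relation $e=\tilde{h}_{L_{n-1}-L_n}(-1)\tilde{h}_{L_{n-1}+L_n}(-1)$ just recorded. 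Your argument is correct, but the paper's avoids the symbol-by-symbol bookkeeping entirely and makes the role of the special relation at $-1$ transparent; it is also what is invoked later when relation~\eqref{for:6} is reduced in the proof of Theorem~\ref{th:1}.
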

\begin{proof}
(i) Using Lemma \ref{le:22}, for $\forall t\in\RR^*$, let
$z^2=\abs{t}$ we have
\begin{align*}
\tilde{h}_{2L_n}(t)&=\tilde{w}_{2L_n}(t)\tilde{w}_{2L_n}(-1)\\
&=\tilde{h}_{L_{n-1}-L_n}(z^{-1})\tilde{w}_{2L_n}(tz^{-2})\tilde{h}_{L_{n-1}-L_n}(z^{-1})^{-1}\tilde{w}_{2L_n}(-1)\\
&=\tilde{h}_{L_{n-1}-L_n}(z^{-1})\tilde{w}_{2L_n}(t\abs{t}^{-1})\tilde{h}_{L_{n-1}-L_n}(z^{-1})^{-1}\tilde{w}_{2L_n}(-1).
\end{align*}
If $t>0$ we have
\begin{align*}
&\tilde{h}_{L_{n-1}-L_n}(z^{-1})\tilde{w}_{2L_n}(t\abs{t}^{-1})\tilde{h}_{L_{n-1}-L_n}(z^{-1})^{-1}\tilde{w}_{2L_n}(-1)\\
=&\tilde{h}_{L_{n-1}-L_n}(z^{-1})(\tilde{w}_{2L_n}(1)\tilde{h}_{L_{n-1}-L_n}(z^{-1})^{-1}\tilde{w}_{2L_n}(-1))\\
=&\tilde{h}_{L_{n-1}-L_n}(z^{-1})\tilde{h}_{L_{n-1}+L_n}(-
1)\tilde{h}_{L_{n-1}+L_n}(-z^{-1} )^{-1}.
\end{align*}
If $t<0$ we have
\begin{align*}
&\tilde{h}_{L_{n-1}-L_n}(z^{-1})\tilde{w}_{2L_n}(t\abs{t}^{-1})\tilde{h}_{L_{n-1}-L_n}(z^{-1})^{-1}\tilde{w}_{2L_n}(-1)\notag\\
=&\tilde{h}_{L_{n-1}-L_n}(z^{-1})\tilde{w}_{2L_n}(-1)\tilde{h}_{L_{n-1}-L_n}(z^{-1})^{-1}\tilde{w}_{2L_n}(-1)\notag\\
=&\tilde{h}_{L_{n-1}-L_n}(z^{-1})\tilde{h}_{L_{n-1}+L_n}(z^{-1}
)^{-1}\tilde{h}_{2L_n}(-1).
\end{align*}
Especially, if $t=1$, $z=-1$we have
\begin{align}
e=\tilde{h}_{L_{n-1}-L_n}(-1)\tilde{h}_{L_{n-1}+L_n}(-1).\label{for:1}
\end{align}
Especially, if $t=-1$, $z=-1$we have
\begin{align}
e=\tilde{h}_{L_{n-1}-L_n}(-1)\tilde{h}_{L_{n-1}+L_n}(-1)^{-1}.\label{for:3}
\end{align}
Thus we get
\begin{align}
e=\bigl(\tilde{h}_{L_{n-1}-L_n}(-1)\bigl)^{2}.\label{for:4}
\end{align}
 Hence we proved (i).

 (ii) By Lemma \ref{le:23} and (i), any $h\in \tilde{H}_{2L_n}$ can be written as
$$h=\tilde{h}_{L_{n-1}-L_n}(t_1)\tilde{h}_{L_{n-1}+L_n}(t_2)h_1h_2$$
where $t_1,t_2\in\RR^*$, $h_1\in\ker(\pi_1)\cap
\tilde{H}_{L_{1}-L_2}$ and $h_2\in \tilde{H}_c$.

If $\pi_1(h)=I_{2n}$, we have $t_1=t_2=\pm 1$, and
$\pi_1(h_2)=I_{2n}$. If $t_1=t_2=1$, we have $h=h_1h_2$. If
$t_1=t_2=-1$, by (\ref{for:1}) we still have $h=h_1h_2$. Notice
$\pi_1(\tilde{h}_{2n}(-1))=$diag$((-1)_n,(-1)_{2n})$, it follows
$h_2=\bigl(\tilde{h}_{2n}(-1)\bigl)^{2k}$, $k\in\ZZ$. Hence we
proved (ii).
\end{proof}
\subsection{Proof of Proposition \ref{th:9}}
By (3) of Lemma \ref{le:4}, $$\ker\pi_1=\prod_{\alpha\in
\Delta}(\ker\pi_1\bigcap \tilde{H}_{\alpha}),$$ where
$\Delta=\{L_i-L_{i+1}\}_i\cup\{2L_n\}$.

By (ii) of Lemma \ref{le:5}, we have
$$\ker\pi_1\subseteq(\ker\pi_1\bigcap \tilde{H}_{L_1-L_2})\cdot \tilde{H}_0.$$
The inverse inclusion is obvious. Hence we finished the proof.
\section{Proof of Theorem \ref{th:5}}\label{sec:3}
\subsection{Basic settings}We follow some notations in Section \ref{sec:6}. We consider Lie groups $G=SL(2n,\KK)$, $\KK=\RR$ or $\CC$, $n\geq
2$. Its Lie algebra is the set of traceless matrices. Let
\begin{align*}
D_+=&\{\diag\bigl(\exp t_1,\dots,\exp t_n,\exp
(-t_1),\dots,\exp (-t_n)\bigl):\\
&(t_1,\dots,t_n)\in\RR^n\}.
\end{align*}Let
$\Phi$ be the root system with respect to $D_+$. The roots are $\pm
L_i \pm L_j(i<j\leq n)$ with dimensions 2 and $\pm 2L_i(1\leq i\leq
n)$ with dimension 1. The set of positive roots $\Phi^{+}$ and the
corresponding set of
 simple roots $\Delta$ are
\begin{align*}
&\Phi^{+}=\{L_i-L_j\}_{i<j}\cup\{L_i+L_j\}_{i<j}\cup\{2L_i\}_{i},\\
&\Delta=\{L_i-L_{i+1}\}_i\cup\{2L_n\}.
 \end{align*}
Let $1\leq i,j\leq n, i\neq j$ be two distinct indices and let
$\exp$ be the exponentiation map for matrices.

The corresponding root spaces are
\begin{align*}
 &\mathfrak{g}_{L_i+L_j}=(\RR e_{i,j+n}+\RR e_{j,i+n})_{i<j}, \qquad
 \mathfrak{g}_{L_i-L_j}=(\RR e_{i,j}+\RR e_{j+n,i+n})_{i\neq j},\\
 &\mathfrak{g}_{-L_i-L_j}=(\RR e_{j+n,i}+\RR e_{i+n,j})_{i<j},\qquad \mathfrak{g}_{2L_i}=\RR
 e_{i,i+n},\\
 &\mathfrak{g}_{-2L_i}=\RR e_{i+n,n}.
\end{align*}
Let
\begin{align*}
f^1_{L_i+L_j}(t)&=(te_{i,j+n})_{i<j},\qquad
&f^2_{L_i+L_j}(t)&=(te_{j,i+n})_{i<j},\\
f^1_{L_i-L_j}(t)&=(te_{i,j})_{i\neq j},\qquad &f^2_{L_i-L_j}(t)&=(te_{j+n,i+n})_{i\neq j}\\
f^1_{-L_i-L_j}(t)&=(te_{j+n,i})_{i<j},\qquad
&f^2_{-L_i-L_j}(t)&=(te_{i+n,j})_{i<j}\\
f_{2L_i}(t)&=te_{i,i+n}, \qquad &f_{-2L_i}(t)&=te_{i+n,n}.
\end{align*}

For $(t_1,t_2)\in\KK^2$, $t\in\KK$ we write
\begin{align*}
&x_\rho(t)=\exp (tf_{\rho})\qquad\text{ for }\rho=\pm 2L_i,\\
&x_r(t_1,t_2)=\exp (f^1_r(t_1))\exp (f^2_r(t_2)),\qquad\text{ for
}r=\pm L_i\pm L_j.
\end{align*}

Let
\begin{align*}
&w_{\rho}(t)=x_{\rho}(t)x_{-\rho}(-t^{-1})x_{\rho}(t),\qquad t\in
\KK^*,\rho=\pm 2L_i.
\end{align*}
For $r=\pm L_i\pm L_j(i\neq j)$, $(t_1,t_2)\in\KK^*\times\KK^*$, let
\begin{align*}
&w_{r}(t_1,t_2)=x_{r}(t_1,t_2)x_{-r}(-t_1^{-1},-t_2^{-1})x_{r}(t_1,t_2);
\end{align*}
for $t\in\KK^*$ let
\begin{align*}
&w_{r}(t,0)=x_{r}(t,0)x_{-r}(-t^{-1},0)x_{r}(t,0),\\
&w_{r}(0,t)=x_{r}(0,t)x_{-r}(0,-t^{-1})x_{r}(0,t).
\end{align*}
Correspondingly, we define
\begin{align*}
&h_\rho(t)=w_{\rho}(t)w_{\rho}(1)^{-1},\qquad t\in\KK^*,\rho=\pm
2L_i,
\end{align*}
for $r=\pm L_i\pm L_j(i\neq j)$, $(t_1,t_2)\in\KK^*\times\KK^*$, let
\begin{align*}
&h_{r}(t_1,t_2)=w_{r}(t_1,t_2)w_{r}(-1,-1);
\end{align*}
for $t\in\KK^*$ let
\begin{align*}
&h_{r}(t,0)=w_{r}(t,0)w_{r}(-1,0),\\
&h_{r}(0,t)=w_{r}(0,t)w_{r}(0,-1).
\end{align*}
Let us write $p(\pi)$ the permutation matrix corresponding to the
permutation $\pi$, that is, the $i,j$ entry of $p(\pi)$ is $1$ if
$i=\pi(j)$ and zeros otherwise. With these notations we have:
\begin{align*}
w_{L_i-L_j}(t_1,t_2)=p(\pi)\text{diag}\left((-t_1^{-1})_i,(t_1)_j,(t_2)_{i+n},(-t_2^{-1})_{j+n}\right),
\quad(t_1,t_2)\in\RR^*\times\RR^*
\end{align*}
where $\pi$ only permutes $(i,j)$ and $(i+n,j+n)$ while fixes other
numbers.
\begin{align*}
w_{L_i-L_j}(t_1,0)=p(\pi)\text{diag}\left((-t_1^{-1})_i,(t_1)_j\right),
\qquad t_1\in\RR^*
\end{align*}
where $\pi$ only permutes $(i,j)$ while fixes other numbers.
\begin{align*}
w_{L_i-L_j}(0,t_2)=p(\pi)\text{diag}\left((t_2)_{i+n},(-t_2^{-1})_{j+n}\right),
\qquad t_2\in\RR^*
\end{align*}
where $\pi$ only permutes $(i+n,j+n)$ while fixes other numbers.
\begin{align*}
w_{L_i+L_j}(t_1,t_2)=p(\pi)\text{diag}\left((-t_1^{-1})_i,(-t_2^{-1})_j,(t_2)_{i+n},(t_1)_{j+n}\right),\quad
(t_1,t_2)\in\RR^*\times\RR^*
\end{align*}
where $\pi$ only permutes $(i,j+n)$ and $(j,i+n)$ while fixes other
numbers.
\begin{align*}
w_{L_i+L_j}(0,t_2)=p(\pi)\text{diag}\left((-t_2^{-1})_j,(t_2)_{i+n}\right),\quad
t_2\in\RR^*
\end{align*}
where $\pi$ only permutes $(j,i+n)$ while fixes other numbers.
\begin{align*}
w_{L_i+L_j}(t_1,0)=p(\pi)\text{diag}\left((-t_1^{-1})_i,(t_1)_{j+n}\right),\quad
t_1\in\RR^*
\end{align*}
where $\pi$ only permutes $(i,j+n)$ while fixes other numbers.
\begin{align*}
w_{2L_i}(t)=p(\pi)\text{diag}\bigl((-t^{-1})_i,t_{i+n}),\qquad
\text{ for }a\in \RR^*,
\end{align*}
where $\pi$ only permutes $(i,i+n)$ while fixes other numbers. Let
$W_0$ be the set composed of all permutations stated above. Then
$W_0$ is just $S_{2n}$, the permutation group on $2n$ elements.

The root system is not stable under $W_0$. For example, if $w\in
W_0$ permutes $i$ and $j$ only, then $w(L_i-L_\ell)$ is not a root
for any $\ell\leq n$. If we consider the restricted roots, that is
let $\gamma^\delta$, $\gamma\in\Phi$, $\delta=1,2$ be the root
restricted on root space $f^\delta_\gamma$, then restricted root
system are stable under $W_0$.

 We denote by
$x_\gamma(\gamma\in\Phi)$ the subgroup generated by
$x_\gamma(t),t\in\KK$ or $t\in\KK^2$. We can construct the extension
as was done in Section \ref{sec:4} with respect to $\Phi$. We still
get $\widetilde{G}$ and a well-defined homomorphism
$\pi_1:\widetilde{G}\rightarrow SL(2n,\KK)$. We can also define
elements $\tilde{x}_{r}(t_1,t_2)$, $\tilde{x}_{\rho}(t)$,
$\tilde{w}_{r}(t_1,t_2)$, $\tilde{w}_{\rho}(t)$,
$\tilde{h}_r(t_1,t_2)$, $\tilde{h}_\rho(t)$, $\tilde{x}_\gamma$ etc.
as was done in Section \ref{sec:4}.
\begin{remark}
Notice now we don't know if ($\widetilde{G}$, $\pi_1$) is central or
not, not to mention universal central or not(in fact, we can prove
it is). But $\pi_1$ is surjective since $x_\gamma$ and their Lie
brackets generate the the whole group.
\end{remark}
It is clear certain relations hold both in $\widetilde{G}$ and
$SL(2n,\KK)$. To simplify notation, we write for $f\in
x_\gamma(\gamma\in\Phi)$ the corresponding element in
$\widetilde{G}$ by $\tilde{x}_\gamma(f)$. The notation coincides
with the former one. We have
\begin{lemma}\label{le:2}
If $\gamma, \beta=\pm L_i\pm L_j(i\neq j)$,
$(u_1,u_2)\in\KK^2\backslash 0$, $u,v_1,v_2\in\KK^*$ then
\begin{itemize}
\item[1]$\tilde{w}_{\gamma}(u_1,u_2)\tilde{x}^\delta_{\beta}(v)\tilde{w}_{\gamma}(u)^{-1}\\=
\tilde{x}_{w_{\gamma}(\beta^\delta)}\bigl(w_{\gamma}(u_1,u_2)x^\delta_{\beta}(v)w_{\gamma}(u_1,u_2)^{-1}\bigl)$.
\item[2]$\tilde{w}_{2L_i}(u)\tilde{x}^\delta_{\beta}(v)\tilde{w}_{2L_i}(u)^{-1}\\=
\tilde{x}_{w_{2L_i}(\beta^\delta)}\bigl(w_{2L_i}(u)x^\delta_{\beta}(v)w_{2L_i}(u)^{-1}\bigl)$.
\end{itemize}
\end{lemma}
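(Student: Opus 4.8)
The plan is to reduce conjugation by $\tilde{w}_\gamma$ to a short word in the root-group generators $\tilde{x}_{\pm\gamma}(\cdot)$ and then push $\tilde{x}^\delta_\beta(v)$ through it using the commutator relations \eqref{for:9}--\eqref{for:10}, which hold in $\widetilde{G}$ by construction. By definition $\tilde{w}_\gamma(u_1,u_2)=\tilde{x}_\gamma(u_1,u_2)\,\tilde{x}_{-\gamma}(-u_1^{-1},-u_2^{-1})\,\tilde{x}_\gamma(u_1,u_2)$ (with the evident one-parameter and mixed variants, and $\tilde{w}_{2L_i}(u)=\tilde{x}_{2L_i}(u)\tilde{x}_{-2L_i}(-u^{-1})\tilde{x}_{2L_i}(u)$ for part 2), so $\tilde{w}_\gamma(u_1,u_2)\,\tilde{x}^\delta_\beta(v)\,\tilde{w}_\gamma(u_1,u_2)^{-1}$ is a word of length seven in generators attached only to $\gamma,-\gamma,\beta$. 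Each commutator $[\tilde{x}_{\pm\gamma}(a),\tilde{x}^\delta_\beta(v)]$ rewrites by \eqref{for:9} as a product of $\tilde{x}_{i(\pm\gamma)+j\beta}(\cdot)$ with $i,j\ge 1$, or is trivial by \eqref{for:10} when $\pm\gamma+\beta\notin\Phi$; moving $\tilde{x}^\delta_\beta(v)$ through the seven letters in this way, exactly as in the Chevalley-group computation of $w_\alpha x_\beta(t) w_\alpha^{-1}$ (cf. \cite[p.81]{Steinberg2}), makes the intermediate factors cancel in pairs and leaves a single factor in the root group $\tilde{x}_{w_\gamma(\beta^\delta)}$.

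Here the structure of the restricted root system enters. If $\beta$ and $\gamma$ use disjoint indices then $\pm\gamma+\beta\notin\Phi$, so everything commutes, $w_\gamma(\beta^\delta)=\beta^\delta$, and both sides equal $\tilde{x}^\delta_\beta(v)$. Otherwise $\gamma$ shares one index with $\beta$ and there are only finitely many configurations (the shared index, the signs, and which of $f^1_\beta,f^2_\beta$ carries $v$); in each the telescoping is explicit and short. Since the system of restricted roots $\{\gamma^\delta\}$ is $W_0$-stable (as noted in the text, while $\Phi$ itself is not), the surviving factor genuinely lies in a restricted root group $\tilde{x}_{w_\gamma(\beta^\delta)}$, and the same bookkeeping shows which superscript it carries.

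With the left-hand side known to lie in $\tilde{x}_{w_\gamma(\beta^\delta)}$, the coefficient is forced. Applying the homomorphism $\pi_1$, and using $\pi_1(\tilde{x}_r(\cdot))=x_r(\cdot)$, the left-hand side maps to $w_\gamma(u_1,u_2)\,x^\delta_\beta(v)\,w_\gamma(u_1,u_2)^{-1}\in SL(2n,\KK)$, which by the explicit matrix formulas for the $w$'s (a signed permutation times a diagonal matrix) lies in the single unipotent subgroup $x_{w_\gamma(\beta^\delta)}$ and is evaluated by one matrix product. Relation \eqref{for:8} makes $\tilde{x}_{w_\gamma(\beta^\delta)}$ a quotient of $(\KK,+)$ or $(\KK^2,+)$, and $\pi_1$ carries it onto the corresponding unipotent subgroup, isomorphic to that same additive group; hence $\pi_1$ is injective on $\tilde{x}_{w_\gamma(\beta^\delta)}$ and the left-hand side equals $\tilde{x}_{w_\gamma(\beta^\delta)}\bigl(w_\gamma(u_1,u_2)\,x^\delta_\beta(v)\,w_\gamma(u_1,u_2)^{-1}\bigr)$, as claimed; part 2 is identical with $\gamma$ replaced by $2L_i$.

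The main obstacle is the telescoping in the first step: one must verify, configuration by configuration, that after conjugation by the full $\tilde{w}_\gamma$ no factor $\tilde{x}_{i\gamma+j\beta}$ with $i\ge 1$ survives and that the residual superscript and root come out right. This is finite but fiddly, and the two-dimensional root spaces together with the fact that only the restricted roots (not $\Phi$) are $W_0$-stable are exactly what make it more delicate than the $SL_n$ case and force the computation to be organized through the restricted root system. A minor additional point to record is the injectivity of $\pi_1$ on each root group, harmless here since root groups are unipotent but worth stating because $\widetilde{G}$ is not yet known to be a central extension at this stage.
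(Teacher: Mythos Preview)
Your proposal is correct and follows essentially the same route as the paper, which simply cites \cite[p.40]{Steinberg2} and \cite[1.10--1.12]{Deodhar} for the standard Chevalley--Steinberg computation of $\tilde w_\gamma \tilde x_\beta(v)\tilde w_\gamma^{-1}$ via the commutator relations. Your added observation that $\pi_1$ is injective on each root subgroup (so the coefficient can be read off in $SL(2n,\KK)$ rather than tracked through the telescoping in $\widetilde G$) is a legitimate shortcut and is implicit in the references; otherwise the arguments coincide.
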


\begin{proof}
It is easily proved by computations using \cite[p.40]{Steinberg2}
and 1.10--1.12 in \cite{Deodhar}.
\end{proof}
\begin{lemma}\label{le:1}
$\tilde{w}_\gamma(t_1,t_2)(t_1,t_2\in\KK^*)$ with $\gamma=\pm L_i\pm
L_j(i\neq j)$ are generated by $\tilde{w}_\beta(t,0)$,
$\tilde{w}_\beta(0,t)$ and $\tilde{w}_{2L_i}(t)$ where
$\beta=L_i-L_j(i<j)$, $t\in\KK^*$.
\end{lemma}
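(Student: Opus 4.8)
The plan is to first reduce, by conjugation inside $\widetilde G$, to the case $\gamma = L_i-L_j$ with $i<j$, and then to prove the single identity $\tilde w_\beta(t_1,t_2)=\tilde w_\beta(t_1,0)\,\tilde w_\beta(0,t_2)$ in $\widetilde G$.

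\emph{Reduction.} Fix indices $i\ne j$ and put $\beta=L_i-L_j$ with $i<j$. Conjugation in $\widetilde G$ by $\tilde w_{2L_i}(1)$, by $\tilde w_{2L_j}(1)$, and by $\tilde w_{2L_i}(1)\tilde w_{2L_j}(1)$ carries $\beta$ to $-L_i-L_j$, to $L_i+L_j$, and to $L_j-L_i$ respectively; by Lemma \ref{le:2}(2) each of these conjugations maps the two one-parameter pieces of the root group $\tilde x_{\pm\beta}$ isomorphically (by a $\pm1$ rescaling) onto the corresponding pieces of $\tilde x_{\pm w(\beta)}$. Applying this to every letter of the defining word $\tilde w_\beta(t_1,t_2)=\tilde x_\beta(t_1,t_2)\tilde x_{-\beta}(-t_1^{-1},-t_2^{-1})\tilde x_\beta(t_1,t_2)$, after splitting $\tilde x_{\pm\beta}(u_1,u_2)=\tilde x_{\pm\beta}(u_1,0)\tilde x_{\pm\beta}(0,u_2)$ via \eqref{for:8}, shows that each conjugate is exactly the word defining $\tilde w_{w(\beta)}(t_1',t_2')$ with $t_1',t_2'\in\KK^*$. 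Since the $\tilde w_{2L_\ell}(1)$ are among the allowed generators, it suffices to treat $\gamma=\beta=L_i-L_j$, $i<j$.

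\emph{The key identity and the obstacle.} By \eqref{for:8} the abelian root groups $\tilde x_{\pm\beta}\cong(\KK^2,+)$ are the commuting products of their one-parameter subgroups, so $\tilde w_\beta(t_1,t_2)=A_1A_2B_1B_2A_1A_2$, where $A_1,A_2\in\tilde x_\beta$ are the $e_{i,j}$- and $e_{j+n,i+n}$-directions and $B_1,B_2\in\tilde x_{-\beta}$ the $e_{j,i}$- and $e_{i+n,j+n}$-directions. Granting the two cross-commutations $[A_2,B_1]=\mathrm{id}$ and $[A_1,B_2]=\mathrm{id}$ in $\widetilde G$, a short rearrangement (using also $[A_1,A_2]=\mathrm{id}$) yields $\tilde w_\beta(t_1,t_2)=(A_1B_1A_1)(A_2B_2A_2)=\tilde w_\beta(t_1,0)\,\tilde w_\beta(0,t_2)$, both factors being allowed generators. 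These two commutations are the whole point: they are trivial in $SL(2n,\KK)$ because $e_{j+n,i+n}$ and $e_{j,i}$ (resp. $e_{i,j}$ and $e_{i+n,j+n}$) have disjoint matrix supports, but they are commutations \emph{between the opposite root groups} $\tilde x_\beta$ and $\tilde x_{-\beta}$, for which the presentation \eqref{for:8}--\eqref{for:10} of $\widetilde G$ supplies no relation; this is precisely where $\widetilde G$ may differ from $SL(2n,\KK)$, and it is the only place in the proof where one genuinely works upstairs.

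\emph{Resolving the cross-commutations.} To get $[A_2,B_1]=\mathrm{id}$, choose a third index $k\notin\{i,j\}$ (possible when $n\ge3$; for $n=2$ the same scheme works with the roots $\pm2L_\ell$ in place of $L_i-L_k$ and $L_k-L_j$, at the cost of a longer bracket computation). Since $-L_j-L_k+(L_i+L_k)=\beta$ is the only root of the form $p(-L_j-L_k)+q(L_i+L_k)$ with $p,q\ge1$, relation \eqref{for:9} reads $[\tilde x_{-L_j-L_k}(\,\cdot\,),\tilde x_{L_i+L_k}(\,\cdot\,)]=\tilde x_\beta(\,\cdot\,)$ with a single structure function on the right-hand side; evaluating in $SL(2n,\KK)$, where the corresponding Lie bracket is $e_{j+n,i+n}$, shows that for a suitable choice of one-parameter arguments this commutator equals $A_2$, i.e. $A_2=[g,h]$ with $g\in\tilde x_{-L_j-L_k}$ and $h\in\tilde x_{L_i+L_k}$. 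The same single-term mechanism applied to the pairs $(-L_j-L_k,\,L_j-L_i)$ and $(L_i+L_k,\,L_j-L_i)$ — whose sums $-L_i-L_k$ and $L_j+L_k$ are again the only roots that occur — together with the vanishing of the relevant brackets in $SL(2n,\KK)$, shows that $g$ and $h$ each commute with $B_1$; hence so does $[g,h]=A_2$. Symmetrically, decomposing $A_1$ through the roots $L_i-L_k$ and $L_k-L_j$ gives $[A_1,B_2]=\mathrm{id}$. This establishes the key identity and the lemma. The bulk of the work, and the part I expect to be delicate, is exactly this verification of the opposite-root commutations, together with keeping track of which one-dimensional piece of each two-dimensional root group appears at each step.
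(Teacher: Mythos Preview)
Your argument is correct and yields the stronger product formula $\tilde w_\beta(t_1,t_2)=\tilde w_\beta(t_1,0)\,\tilde w_\beta(0,t_2)$, but it proceeds along a genuinely different line from the paper. The paper does \emph{not} attempt to commute pieces of $\tilde x_\beta$ past pieces of $\tilde x_{-\beta}$. Instead it conjugates the whole word $\tilde w_{L_i-L_j}(t_1,t_2)$ by $\tilde w_{L_i+L_j}(a,0)$; Lemma~\ref{le:2} sends the six letters into the long-root groups $\tilde x_{\pm 2L_i}$ and $\tilde x_{\pm 2L_j}$, and since $\pm 2L_i\pm 2L_j\notin\Phi$, relation~\eqref{for:10} makes \emph{all} the needed commutations automatic. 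One line of rearrangement then gives $\tilde w_{L_i+L_j}(a,0)\,\tilde w_{L_i-L_j}(t_1,t_2)\,\tilde w_{L_i+L_j}(a,0)^{-1}=\tilde w_{-2L_j}(-a^{-1}t_1)\,\tilde w_{2L_i}(at_2)$, and a few further conjugation identities (e.g.\ $\tilde w_{2L_i}(a)\,\tilde w_{L_i+L_j}(t_1,t_2)\,\tilde w_{2L_i}(a)^{-1}=\tilde w_{L_j-L_i}(t_2a^{-1},-t_1a^{-1})$ and $\tilde w_\gamma(t_1,t_2)=\tilde w_{-\gamma}(-t_1^{-1},-t_2^{-1})$) cover the remaining $\gamma$'s. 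This is uniform in $n\ge 2$ and sidesteps the opposite-root issue entirely.

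Your route, by contrast, confronts that issue head-on: you manufacture the needed opposite-root commutations $[A_2,B_1]=[A_1,B_2]=\mathrm{id}$ by writing $A_2$ (resp.\ $A_1$) as a Steinberg commutator through an auxiliary index $k$ and checking that the two factors separately commute with $B_1$ (resp.\ $B_2$). This is a nice trick and gives a cleaner end formula, at the cost of more bracket bookkeeping and a small case split. One caveat: your $n=2$ hint is slightly off. The working substitutes are not a pair of long roots $\pm 2L_\ell$ but a long root together with $\pm(L_1+L_2)$; for instance $A_2=[\tilde x_{-L_1-L_2}(s,0),\tilde x_{2L_1}(u)]$ and $A_1=[\tilde x_{L_1+L_2}(s,0),\tilde x_{-2L_2}(u)]$, and in each case both factors do commute with the relevant $B_\delta$. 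With that correction your proof goes through.
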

\begin{proof} For $a\in\KK^*$, $t_1,t_2\in\KK^*$, keep using Lemma \ref{le:2} we have
\begin{align*}
&\tilde{w}_{L_i+L_j}(a,0)\tilde{w}_{L_i-L_j}(t_1,t_2)\tilde{w}_{L_i+L_j}(a,0)^{-1}\\
&=\tilde{w}_{L_i+L_j}(a,0)\tilde{x}_{L_i-L_j}(t_1,t_2)\tilde{x}_{L_j-L_i}(-t_1^{-1},-t_2^{-1})
\tilde{x}_{L_i-L_j}(t_1,t_2)\tilde{w}_{L_i+L_j}(a,0)^{-1}\\
&=\tilde{x}_{-2L_j}(-a^{-1}t_1)\tilde{x}_{2L_i}(at_2)\tilde{x}_{2L_j}(at_1^{-1})
\tilde{x}_{-2L_i}(-a^{-1}t_2^{-1})\tilde{x}_{-2L_j}(-a^{-1}t_1)\tilde{x}_{2L_i}(at_2)\\
&=\tilde{x}_{-2L_j}(-a^{-1}t_1)\tilde{x}_{2L_j}(at_1^{-1})\tilde{x}_{2L_i}(at_2)
\tilde{x}_{-2L_i}(-a^{-1}t_2^{-1})\tilde{x}_{2L_i}(at_2)\tilde{x}_{-2L_j}(-a^{-1}t_1)\\
&=\tilde{x}_{-2L_j}(-a^{-1}t_1)\tilde{x}_{2L_j}(at_1^{-1})\tilde{w}_{2L_i}(at_2)
\tilde{x}_{-2L_j}(-a^{-1}t_1)\\
&=\tilde{x}_{-2L_j}(-a^{-1}t_1)\tilde{x}_{2L_j}(at_1^{-1})
\tilde{x}_{-2L_j}(-a^{-1}t_1)\tilde{w}_{2L_i}(at_2)\\
&=\tilde{w}_{-2L_j}(-a^{-1}t_1)\tilde{w}_{2L_i}(at_2).
\end{align*}
Similarly, for $t\in\KK^*$ keep using Lemma \ref{le:2}, we have
\begin{align*}
&\tilde{w}_{2L_i}(a)\tilde{w}_{L_i+L_j}(t_1,t_2)\tilde{w}_{2L_i}(a)^{-1}=\tilde{w}_{L_j-L_i}(t_2a^{-1},-t_1a^{-1}),\\
&\tilde{w}_{2L_i}(a)\tilde{w}_{L_i+L_j}(t,0)\tilde{w}_{2L_i}(a)^{-1}=\tilde{w}_{L_j-L_i}(0,-ta^{-1}),
\end{align*}
for $\gamma=\pm L_i\pm L_j(i\neq j)$ we have
\begin{align*}
\tilde{w}_{\gamma}(t_1,t_2)&=\tilde{w}_{-\gamma}(-t_1^{-1},-t_2^{-1}),\quad&\tilde{w}_{\gamma}(t,0)&=\tilde{w}_{-\gamma}(-t^{-1},0),\\
\tilde{w}_{\gamma}(0,t)&=\tilde{w}_{-\gamma}(0,-t^{-1}),\quad&\tilde{w}_{2L_i}(t)&=\tilde{w}_{-2L_i}(-t^{-1}).\\
\end{align*}
Hence we get the conclusion.
\end{proof}
Using a method similar to that in the proof of
\cite[7.7]{Steinberg}, we have
\begin{corollary}\label{cor:1}
Let $\tilde{W}$ be the subgroup of $\widetilde{G}$ generated by
$\{\tilde{w}_\gamma(u_1,u_2), \gamma\in\Phi,
(u_1,u_2)\in\KK^2\backslash 0$. Then $\tilde{W}$ is generated by
$\tilde{w}_{L_i-L_{i+1}}(u,0)$, $\tilde{w}_{L_i-L_{i+1}}(0,u)$ and
$\tilde{w}_{2L_{n}}(u)$ where $u\in\KK^*$.
\end{corollary}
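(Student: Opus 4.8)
We follow the strategy of \cite[7.7]{Steinberg}. Let $\tilde{W}'$ be the subgroup of $\widetilde{G}$ generated by $\tilde{w}_{L_i-L_{i+1}}(u,0)$, $\tilde{w}_{L_i-L_{i+1}}(0,u)$ and $\tilde{w}_{2L_n}(u)$ with $u\in\KK^*$; the inclusion $\tilde{W}'\subseteq\tilde{W}$ is clear, so the plan is to show that every generator $\tilde{w}_\gamma(u_1,u_2)$ of $\tilde{W}$ lies in $\tilde{W}'$. By Lemma \ref{le:1}, together with the identities $\tilde{w}_\gamma(t,0)=\tilde{w}_{-\gamma}(-t^{-1},0)$, $\tilde{w}_\gamma(0,t)=\tilde{w}_{-\gamma}(0,-t^{-1})$, $\tilde{w}_{2L_i}(t)=\tilde{w}_{-2L_i}(-t^{-1})$ and the conjugation formulas $\tilde{w}_{2L_i}(a)\tilde{w}_{L_i+L_j}(\cdot,\cdot)\tilde{w}_{2L_i}(a)^{-1}=\tilde{w}_{L_j-L_i}(\cdot,\cdot)$ recorded in the proof of that lemma, it is enough to prove that $\tilde{w}_{L_i-L_j}(u,0),\ \tilde{w}_{L_i-L_j}(0,u)\in\tilde{W}'$ for all $i<j$ and $\tilde{w}_{2L_i}(u)\in\tilde{W}'$ for all $1\le i\le n$. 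The engine for this is Lemma \ref{le:2}: since the restricted root system is $W_0$-stable (the remark preceding Lemma \ref{le:2}), conjugation by any $\tilde{w}_\gamma(\cdot)$ carries each $\tilde{x}^\delta_\beta$-factor to an $\tilde{x}_{w_\gamma(\beta^\delta)}$-factor with an explicitly computable parameter, hence carries each $\tilde{w}_\beta(\cdot)$ to a $\tilde{w}_{w_\gamma(\beta)}(\cdot)$ with a controlled parameter; in other words the Weyl group acts by conjugation on the family $\{\tilde{w}_\gamma(\cdot)\}$.

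First I would reduce the long roots. Starting from $\tilde{w}_{2L_n}(u)\in\tilde{W}'$, conjugation by $\tilde{w}_{L_{n-1}-L_n}(1,0)$ and by $\tilde{w}_{L_{n-1}-L_n}(0,1)$ (both in $\tilde{W}'$), which between them interchange the indices $n\leftrightarrow n-1$ and $2n\leftrightarrow 2n-1$, produces $\tilde{w}_{2L_{n-1}}(u')\in\tilde{W}'$ with $u'$ a controlled multiple of $u$, exactly as formula (3) of Lemma \ref{le:22} was obtained in the symplectic case; as $u$ ranges over $\KK^*$ so does $u'$, and downward induction on $i$ gives $\tilde{w}_{2L_i}(u)\in\tilde{W}'$ for all $i$. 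For the short roots, $\tilde{w}_{L_{j-1}-L_j}(u,0)$ and $\tilde{w}_{L_{j-1}-L_j}(0,u)$ are generators of $\tilde{W}'$, and conjugating by $\tilde{w}_{L_i-L_{i+1}}(1,0)$ resp. $\tilde{w}_{L_i-L_{i+1}}(0,1)$ and tracking the parameter through Lemma \ref{le:2} (the reflection $s_{L_i-L_{i+1}}$ sends $L_{i+1}-L_j$ to $L_i-L_j$) shows, by downward induction on $i$, that $\tilde{w}_{L_i-L_j}(u,0)$ and $\tilde{w}_{L_i-L_j}(0,u)$ lie in $\tilde{W}'$ for all $i<j$. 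With these in hand, Lemma \ref{le:1} and the $L_i+L_j$-identities quoted above place all remaining $\tilde{w}_\gamma(u_1,u_2)$ in $\tilde{W}'$, so $\tilde{W}=\tilde{W}'$.

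The only delicate point is the bookkeeping: one must keep precise track of the scalar (and, over $\RR$, sign) changes of the parameters under each conjugation coming from Lemma \ref{le:2}, and, because the root groups $x_{\pm L_i\pm L_j}$ are two-dimensional, verify at each step that both the first and the second ``slot'' of the target $\tilde{w}$ are actually produced, which is why one conjugates by both the $(\cdot,0)$- and the $(0,\cdot)$-versions of the simple $\tilde{w}_{L_i-L_{i+1}}$. I expect this to be routine rather than an obstacle: no conjugation ever leaves the $W_0$-stable family of restricted-root $\tilde{w}$'s, so there is no genuine obstruction, and the argument is essentially the $SL(2n,\KK)$ transcription of \cite[7.7]{Steinberg} adapted to the non-standard torus $D_+$.
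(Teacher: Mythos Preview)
Your proposal is correct and follows essentially the same approach as the paper: the paper's proof consists of the single sentence ``Using a method similar to that in the proof of \cite[7.7]{Steinberg}, we have,'' placed immediately after Lemma~\ref{le:1}, and your argument is precisely an explicit unpacking of that Steinberg-style Weyl-group induction built on Lemma~\ref{le:1} and the conjugation formulas of Lemma~\ref{le:2}. Your caveat about the parameter bookkeeping is apt but, as you say, routine.
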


\begin{lemma}\label{le:3} For $\gamma\in\Phi$, denote by
$\tilde{H}_{\gamma}$ the subgroup generated by
$\tilde{h}_{\alpha}(v_1,v_2)$, $(v_1,v_2)\in\KK^2\backslash 0$;
$\tilde{H}_{\gamma}^1$ the subgroup generated by
$\tilde{h}_{\alpha}(v,0)$ and $\tilde{H}_{\gamma}^2$ the subgroup
generated by $\tilde{h}_{\alpha}(0,v)$, $v\in\KK^*$. Let $\tilde{H}$
be he subgroup generated by $\{\tilde{H}_{\gamma}, \gamma\in\Phi\}$.
Then
\begin{itemize}
\item[1]$\tilde{H}_{\gamma}^\delta,\gamma\in\Phi$, $\delta=1,2$ is normal in
$\tilde{H}$, and $\tilde{H}$ is normal in $\tilde{W}$.

\item[2]$\tilde{H}$ normalizes each $\tilde{x}_\gamma$, and
hence $\tilde{x}^+$ which is generated by
$\tilde{x}_\beta(\beta\in\Phi^+)$.

\item[3]$\tilde{H}=\bigl(\prod_{\beta=L_i-L_{i+1}}(\tilde{H}_{\beta}^1\tilde{H}_{\beta}^2)\bigl)\cdot\tilde{H}_{2L_n} $.
\end{itemize}
\end{lemma}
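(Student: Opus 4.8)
The plan is to regard Lemma~\ref{le:3} as the $SL(2n,\KK)$-analogue of the standard structural facts about the diagonal subgroup in a Steinberg-type presentation (\cite[Section~7]{Steinberg2}, \cite[7.7]{Steinberg}) and of Lemma~\ref{le:4} above; the only genuinely new feature is that the root spaces of $\Phi$ attached to $\pm L_i\pm L_j$ are two-dimensional, so one must track the index $\delta\in\{1,2\}$ labelling the restricted root subspaces $f^\delta_\gamma$. All of the conjugation bookkeeping is supplied by Lemma~\ref{le:2} (and by the same computations applied to the short roots $\pm 2L_i$), and the combinatorial engine is the observation that each Weyl reflection $w_\gamma$, $\gamma\in\Phi$, acts on the set of restricted roots as an \emph{involution}: by the tables of Section~\ref{sec:3}, $w_\gamma$ is, up to a diagonal factor, the permutation matrix $p(\pi)$ with $\pi$ a transposition or a product of two disjoint transpositions, so that $\pi^2=\mathrm{id}$.

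\textbf{Parts (1) and (2).} I would establish (2) first. By the conjugation formulas, conjugation of a restricted root subgroup $\tilde x^\delta_\beta$ by a single $\tilde w_\gamma(u_1,u_2)$ or $\tilde w_{2L_i}(u)$ produces $\tilde x_{w_\gamma(\beta^\delta)}$. Since $\tilde h_\gamma(v_1,v_2)=\tilde w_\gamma(v_1,v_2)\,\tilde w_\gamma(-1,-1)$ is a product of two $\tilde w_\gamma$'s whose common underlying permutation $\pi$ is an involution, conjugation by $\tilde h_\gamma$ returns $\tilde x^\delta_\beta$ onto itself (only the parameter changes, rescaling the argument). Hence $\tilde H$ normalizes every $\tilde x_\gamma$ and therefore $\tilde x^+$, which is (2). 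For (1): the same analysis applied to $\tilde h^\delta_\beta(v)=\tilde w^\delta_\beta(v)\tilde w^\delta_\beta(-1)$, together with the identities $\tilde w_\alpha(t)^{-1}=\tilde w_\alpha(-t)$ and $\tilde w_\alpha(1)\tilde w_\alpha(c)=\tilde h_\alpha(-c)^{-1}$, shows that conjugation of $\tilde h^\delta_\beta(v)$ by any generator of $\tilde W$ lands inside $\tilde H_{w_\gamma(\beta^\delta)}$ --- no stray torus factor survives, since the defining word $x^\delta_\beta x^\delta_{-\beta}x^\delta_\beta$ is merely transported across --- so $\tilde H$ is normal in $\tilde W$ and conjugation by $\tilde W$ permutes the family $\{\tilde H^\delta_\beta\}$. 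Feeding the involution property back in --- conjugating by $\tilde h_\rho$, itself a product of two $\tilde w_\rho$'s, for $\rho\in\Phi$ --- shows that each generator of $\tilde H$ carries $\tilde H^\delta_\gamma$ onto itself, that is, $\tilde H^\delta_\gamma$ is normal in $\tilde H$, which is (1).

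\textbf{Part (3).} This is the structural heart; I would follow the method of \cite[7.7]{Steinberg} as adapted in the proof of Lemma~\ref{le:4}(3), in three stages. \emph{Splitting.} Since $f^1_r,f^2_r$ and likewise $f^1_{-r},f^2_{-r}$ are spanned by matrix units with disjoint supports, relation~\eqref{for:8} together with the commutation relations of Lemma~\ref{le:2} and \cite[1.10--1.12]{Deodhar} allows one to separate the two components inside the words $w_r$ and $h_r$, yielding $\tilde w_r(t_1,t_2)=\tilde w_r(t_1,0)\tilde w_r(0,t_2)$ and hence $\tilde h_r(t_1,t_2)=\tilde h_r(t_1,0)\tilde h_r(0,t_2)\in\tilde H^1_r\tilde H^2_r$; in particular $\tilde H_{L_i-L_{i+1}}=\tilde H^1_{L_i-L_{i+1}}\tilde H^2_{L_i-L_{i+1}}$. \emph{Reduction to simple roots.} Every restricted root $\gamma^\delta$ is $W_0$-conjugate to some $\beta^{\delta'}$ with $\beta=L_j-L_{j+1}$, and by Corollary~\ref{cor:1} the conjugating Weyl element lifts to a word $\tilde w$ in the simple $\tilde w$'s and $\tilde w_{2L_n}$; conjugating $\tilde h^{\delta'}_\beta(v)$ by $\tilde w$ and peeling off one letter at a time produces $\tilde h^\delta_\gamma(v)$ times correction terms which, by an induction on the length of $\tilde w$ exactly as in \cite[7.7]{Steinberg}, already lie in $\langle\tilde H^\epsilon_{L_j-L_{j+1}},\tilde H_{2L_n}\rangle$; the same device, through the $SL(2n,\KK)$-analogue of Lemma~\ref{le:5}(i), absorbs each $\tilde H_{2L_i}$ with $i<n$ into $\langle\tilde H^\epsilon_{L_j-L_{j+1}},\tilde H_{2L_n}\rangle$. \emph{Ordering.} Finally, using that each $\tilde H^\delta_{L_i-L_{i+1}}$ is normal in $\tilde H$ (Part (1)) and the commutator relations \eqref{for:9}--\eqref{for:10} among the $\tilde h$'s, one sorts an arbitrary word in the generators into the prescribed order $\bigl(\prod_{\beta=L_i-L_{i+1}}(\tilde H^1_\beta\tilde H^2_\beta)\bigr)\cdot\tilde H_{2L_n}$; the reverse inclusion is trivial.

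I expect the reduction step of Part~(3) to be the main obstacle. Unlike the symplectic setting of Lemma~\ref{le:4}, $\widetilde G$ is not yet known to be a central extension, so the cancellation of correction terms must be carried out at the group level rather than modulo the center; moreover the Weyl group $W_0=S_{2n}$ acting on the restricted roots both mixes the components $\delta=1,2$ and interchanges the short roots $2L_i$ with the long roots $L_i\pm L_j$, so one must verify carefully --- as in the chains of identities in the proofs of Lemma~\ref{le:1} and Lemma~\ref{le:5}(i) --- that every $\tilde H_{2L_i}$ contribution can be rewritten through the simple $L_j-L_{j+1}$ factors and $\tilde H_{2L_n}$ without leaving $\tilde H$.
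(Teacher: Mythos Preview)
Your approach is essentially the same as the paper's. The paper's proof is a two-line sketch: parts (1) and (2) are declared ``clear from Lemma~\ref{le:2}'', and part (3) is attributed to ``Corollary~\ref{cor:1} and a method similar to that in the proof of \cite[7.7]{Steinberg}''. You have supplied exactly these ingredients --- Lemma~\ref{le:2} and the involution property of the underlying permutations for (1)--(2), Corollary~\ref{cor:1} together with the Steinberg reduction for (3) --- and have filled in the bookkeeping that the paper omits.

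One small caution about your splitting step in (3): the identity $\tilde w_r(t_1,t_2)=\tilde w_r(t_1,0)\tilde w_r(0,t_2)$ requires that $\tilde x_r(0,t_2)$ commute with $\tilde x_{-r}(s,0)$ in $\widetilde G$, and since $r+(-r)=0$ this is not among the defining relations \eqref{for:8}--\eqref{for:10}. It does follow, but you must derive it by expressing, say, $\tilde x_{L_i-L_j}(0,t)$ as a commutator $[\tilde x_{L_i-L_k}(0,\cdot),\tilde x_{L_k-L_j}(0,\cdot)]$ through a third index $k$ and then applying \eqref{for:9}--\eqref{for:10}; this is presumably the kind of manoeuvre hidden in the paper's citation of \cite[1.10--1.12]{Deodhar} and in the conjugation chains of Lemma~\ref{le:1}. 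With that caveat, your expansion of the argument is sound and faithful to the paper's outline.
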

\begin{proof}
The statements (i) and (ii) are clear from Lemma \ref{le:2}. For
(iii) we use Corollary \ref{cor:1} and a method similar to that in
the proof of \cite[7.7]{Steinberg}.
\end{proof}
An important step towards proof of Theorem \ref{th:5} is
\begin{proposition}
$\ker(\pi_1)\subseteq Z(\widetilde{G})\subseteq\tilde{H}$.
\end{proposition}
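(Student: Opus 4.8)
The statement splits into two inclusions, $\ker(\pi_1)\subseteq Z(\widetilde G)$ (centrality of the extension) and $Z(\widetilde G)\subseteq\tilde H$. The plan is to deduce both from a Bruhat-type decomposition of $\widetilde G$ lifting the ordinary one of $SL(2n,\KK)$ relative to $\Phi$, and then to read off centrality of $\ker(\pi_1)$ from the fact (Lemma \ref{le:3}(2)) that $\tilde H$ normalizes every root subgroup $\tilde{x}_\gamma$.

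The first ingredient is that $\pi_1$ is \emph{injective} on the positive unipotent subgroup $\tilde{x}^{+}$ generated by the $\tilde{x}_\beta$, $\beta\in\Phi^{+}$, and, symmetrically, on $\tilde{x}^{-}$ generated by the $\tilde{x}_\beta$, $\beta\in-\Phi^{+}$. The point is that the instances of \eqref{for:8}--\eqref{for:10} with all roots in $\Phi^{+}$ are precisely a presentation of the unipotent group $U^{+}=\exp\bigl(\bigoplus_{\beta\in\Phi^{+}}\mathfrak g_\beta\bigr)$ of $SL(2n,\KK)$ --- the standard structure theory of unipotent radicals, which remains valid here even though the root groups of the $L_i\pm L_j$ are two-dimensional, by \cite[1.10--1.12]{Deodhar} and \cite[§7]{Steinberg2}. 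Hence $x_\beta(a)\mapsto\tilde{x}_\beta(a)$ extends to a homomorphism $s\colon U^{+}\to\widetilde G$; its image is $\tilde{x}^{+}$ and $\pi_1\circ s=\mathrm{id}_{U^{+}}$, so $s$ is an isomorphism onto $\tilde{x}^{+}$ and $\pi_1|_{\tilde{x}^{+}}=s^{-1}$ is injective.

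With this in hand I would assemble the Bruhat decomposition of $\widetilde G$ by Steinberg's method \cite[§7]{Steinberg2}, \cite[7.7]{Steinberg} run inside Deodhar's framework, feeding in Corollary \ref{cor:1} (generators of $\tilde W$ by lifts of the simple reflections), Lemma \ref{le:3}(1)--(2) ($\tilde H$ normal in $\tilde W$ and $\tilde H$ normalizing each $\tilde{x}_\gamma$), and Lemma \ref{le:2} (conjugation of root groups by the $\tilde{w}_\gamma$); the outcome is that $\widetilde G=\bigcup_{w}\tilde{x}^{-}_w\,\tilde{n}_w\,\tilde H\,\tilde{x}^{+}$ (with $\tilde{n}_w$ a lift of the Weyl element $w$) maps cell-by-cell onto the Bruhat decomposition of $SL(2n,\KK)$, with $\pi_1$ injective on the $\tilde{x}^{\pm}$-factors. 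Then for $z\in\ker(\pi_1)$, putting $z$ in Bruhat form and applying $\pi_1$ forces, by uniqueness downstairs, the Weyl component to be trivial and the $\tilde{x}^{\pm}$-components to map to the identity, hence to equal the identity; so $z\in\tilde H$, i.e. $\ker(\pi_1)\subseteq\tilde H$. For the centre: a direct computation from the formulas for the $w_r$ in Section \ref{sec:3}, together with Lemma \ref{le:3}(3), shows $\pi_1(\tilde H)$ is the full diagonal torus of $SL(2n,\KK)$, so it contains $Z(SL(2n,\KK))$; given $z\in Z(\widetilde G)$, its image $\pi_1(z)$ lies in $Z(SL(2n,\KK))$ since $\pi_1$ is onto, so $\pi_1(z)=\pi_1(h)$ for some $h\in\tilde H$, and then $z=(zh^{-1})h\in\tilde H$ because $zh^{-1}\in\ker(\pi_1)\subseteq\tilde H$; thus $Z(\widetilde G)\subseteq\tilde H$. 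Finally, for centrality of $\ker(\pi_1)$: given $z\in\ker(\pi_1)\subseteq\tilde H$, Lemma \ref{le:3}(2) gives $z\,\tilde{x}_\gamma(a)\,z^{-1}=\tilde{x}_\gamma(a')$ for each $\gamma$ and some $a'$; applying $\pi_1$ and using $\pi_1(z)=\mathrm{id}$ yields $x_\gamma(a')=x_\gamma(a)$ in $SL(2n,\KK)$, whence $a'=a$ since $a\mapsto x_\gamma(a)$ is injective; so $z$ commutes with all generators of $\widetilde G$, proving $\ker(\pi_1)\subseteq Z(\widetilde G)$.

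The main obstacle is making this Bruhat machinery actually run. Unlike the situation of Theorem \ref{th:3}, where one quotes \cite[§7]{Steinberg2} for $Sp(2n,\RR)$ directly, here $SL(2n,\KK)$ is being presented through the \emph{non-reduced} $C_n$-type system $\Phi$ with multiplicity-two root groups for the $L_i\pm L_j$ and the extra relation \eqref{for:11}, so Steinberg's presentation arguments must be carried out in Deodhar's generality; checking that the commutator and Weyl-conjugation relations genuinely suffice --- and keeping track of the restricted roots $\gamma^\delta$ under $W_0$ introduced in Section \ref{sec:3} --- is precisely what Lemmas \ref{le:1}--\ref{le:3} and Corollary \ref{cor:1} are set up to do.
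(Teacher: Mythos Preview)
Your proposal is correct and follows the same Bruhat-lift strategy as the paper, with two minor differences worth noting. First, the paper establishes only the coarser decomposition $\widetilde{G}=\tilde{x}^{+}\tilde{W}\tilde{x}^{+}$ (by proving $\tilde{W}\tilde{x}^{+}\tilde{W}\subseteq\tilde{x}^{+}\tilde{W}\tilde{x}^{+}$ through a case split on the simple-reflection generators of Corollary~\ref{cor:1}), and then for $x_1w'x_2\in\ker(\pi_1)$ observes that $\pi_1(w')=\pi_1(x_1)^{-1}\pi_1(x_2)^{-1}$ is simultaneously monomial and of the block shape $\bigl(\begin{smallmatrix}A_1&A_2\\0&A_3\end{smallmatrix}\bigr)$ with $A_1,A_3$ unipotent triangular, forcing $\pi_1(w')=e$, $x_1=x_2^{-1}$, and $w'\in\tilde{H}$ by \cite[Lemma~22]{Steinberg2}; your finer cell decomposition $\bigcup_w\tilde{x}^{-}_w\tilde{n}_w\tilde{H}\tilde{x}^{+}$ reaches the same conclusion but sets up more machinery than strictly needed. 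Second, your final step makes the inclusion $\ker(\pi_1)\subseteq Z(\widetilde{G})$ explicit via Lemma~\ref{le:3}(2) and injectivity of $a\mapsto x_\gamma(a)$, whereas the paper's proof only argues $\ker(\pi_1)\subseteq\tilde{H}$ and $Z(\widetilde{G})\subseteq\tilde{H}$ directly and relegates centrality to a post-proof remark invoking \cite[Theorem~10]{Steinberg2}; your argument here is cleaner and self-contained.
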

\begin{proof}
Step 1, we prove
$\tilde{W}\tilde{x}^+\tilde{W}\subseteq\tilde{x}^+\tilde{W}\tilde{x}^+$.

Denote by $\tilde{w}_{L_i-L_{i+1}}^1$ the subgroup generated by
$\tilde{w}_{L_i-L_{i+1}}(u,0)$, $\tilde{w}_{L_i-L_{i+1}}^2$ the
subgroup generated by $\tilde{w}_{L_i-L_{i+1}}(0,u)$ and
$\tilde{w}_{2L_n}$ the subgroup generated by $\tilde{w}_{2L_n}(u)$
where $u\in\KK^*$. Then by Corollary \ref{cor:1}, it is enough to
prove for any $w\in\tilde{W}$
$$w\tilde{x}^+\tilde{w}_{L_i-L_{i+1}}^\delta\subseteq
\tilde{x}^+\tilde{W}\tilde{x}^+\text{ and }
w\tilde{x}^+\tilde{w}_{2L_n}\subseteq
\tilde{x}^+\tilde{W}\tilde{x}^+, \delta=1,2.$$ We write
$\tilde{x}^+=\tilde{x}_{L_i-L_{i+1}}^1\tilde{x}'$ where
$\tilde{x}'=\prod\tilde{x}_\beta\tilde{x}_{L_i-L_{i+1}}^2$,
$\beta\in\Phi^+,\beta\neq L_i-L_{i+1}$.

If $w\tilde{x}_{L_i-L_{i+1}}^1w^{-1}\subseteq \tilde{x}^+$, we have
\begin{align*}
w\tilde{x}^+\tilde{w}_{L_i-L_{i+1}}^1&=w\tilde{x}_{L_i-L_{i+1}}^1\tilde{x}'\tilde{w}_{L_i-L_{i+1}}^1
\\
&\subseteq w\tilde{x}_{L_i-L_{i+1}}^1\tilde{w}_{L_i-L_{i+1}}^1\tilde{x}^+\\
&\subseteq(w\tilde{x}_{L_i-L_{i+1}}^1w^{-1})w\tilde{w}_{L_i-L_{i+1}}^1\tilde{x}^+\\
&\subseteq \tilde{x}^+\tilde{W}\tilde{x}^+.
\end{align*}

If $w\tilde{x}_{L_i-L_{i+1}}^1w^{-1}\subseteq \tilde{x}^-$, for any
$v\in\tilde{x}_{L_i-L_{i+1}}^1$, there is
$u\in\tilde{x}_{L_{i+1}-L_{i}}^1$ such that $vuv=w'\in
\tilde{w}_{L_i-L_{i+1}}^1$, thus we have
\begin{align*}
wv\tilde{w}_{L_i-L_{i+1}}^1&=ww'v^{-1}u^{-1}\tilde{w}_{L_i-L_{i+1}}^1\\
&\subseteq ww'v^{-1}\tilde{w}_{L_i-L_{i+1}}^1\tilde{x}^+,\\
&\subseteq
w\tilde{x}_{L_{i+1}-L_{i}}^1w'\tilde{w}_{L_i-L_{i+1}}^1\tilde{x}^+\\
&\subseteq \tilde{x}^+ww'\tilde{w}_{L_i-L_{i+1}}\tilde{x}^+.
\end{align*}
It follows that $$w\tilde{x}^+\tilde{w}_{L_i-L_{i+1}}^1\subseteq
\tilde{x}^+\tilde{W}\tilde{x}^+.$$
 The proof of $w\tilde{x}^+\tilde{w}_{L_i-L_{i+1}}^2\subseteq
\tilde{x}^+\tilde{W}\tilde{x}^+$ and
$w\tilde{x}^+\tilde{w}_{2L_n}\subseteq
\tilde{x}^+\tilde{W}\tilde{x}^+$ follows the same manner. Thus we
finished the first step.

Step 2, we prove $\ker(\pi_1)\subseteq\tilde{H}$.

Since $\widetilde{G}$ is generated by $\tilde{x}^+$ and $\tilde{W}$
and $\tilde{x}^+\cdot\tilde{x}^+\subseteq\tilde{x}^+$, by conclusion
of Step 1, we have $\widetilde{G}=\tilde{x}^+\tilde{W}\tilde{x}^+.$
If $\pi_1(x_1w'x_2)=e$, where $x_1,x_2\in \tilde{x}^+$ and
$w'\in\tilde{W}$, one has $\pi_1(w')=\pi_1(x_1^{-1}x_2^{-1})$. Since
$\pi_1(x_1^{-1}x_2^{-1})$ is of the following form
$$ \begin{pmatrix}A_1 & A_2 \\
0 & A_3 \\
 \end{pmatrix},$$
  where $A_1, A_2, A_3$ are $n\times n$ matrices with $A_1$ unipotent
  upper triangular and $A_3$ unipotent
  lower triangular. It follows immediately that $\pi(w')=e$ and
  $x_1=x_2^{-1}$ by uniqueness of such an expression\cite[p.24]{Steinberg2}. Hence by Lemma \ref{le:2} we have
\begin{align*}
x_1w'x_1^{-1}&=w'(w'^{-1}x_1w')x_1^{-1}\\
&=w'x_1x_1^{-1}\\
&=w'.
\end{align*}
If $\pi_1(w')=e$, then by similar arguments given by Steinberg
\cite[p.31 Lemma 22]{Steinberg2}, we have $w'\in\tilde{H}$.

Step 3, we prove $Z(\widetilde{G})\subseteq\tilde{H}$.

It is sufficient to prove that $Z(G)\subseteq H$. It is quite easy
to see that this is indeed so. Hence the proposition is proved.
\end{proof}
\begin{remark}In fact, by similar arguments given by Steinberg
\cite[p.31 Theorem 10]{Steinberg2}, or by similar explicit
calculations in \cite[p.186]{Silvester},
 we can show $(\widetilde{G},\pi_1)$ is a universal central extension.
\end{remark}
We now consider the conditions under which $\tilde{h}\in\tilde{H}$
is in the kernel of $\pi_1$.
\begin{lemma}\label{le:23}
\begin{align*}
&(1)  \ker(\pi_1)\cap \tilde{H}_{r}^\delta=\ker(\pi_1)\cap
\tilde{H}_{L_1-L_2}^1, \quad r=\pm L_i\pm
L_j(i\neq j),\delta=1,2,\\
&(2) \ker(\pi_1)\cap \tilde{H}_{2L_n}=\ker(\pi_1)\cap
\tilde{H}_{L_1-L_2}^1.\\
&(3) \ker(\pi_1)=\ker(\pi_1)\cap \tilde{H}_{L_1-L_2}^1.
\end{align*}
\end{lemma}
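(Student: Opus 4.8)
The plan is to follow the blueprint already established for the symplectic case in Lemma \ref{le:4} and Lemma \ref{le:5}, adapting it to the split situation where each root $r = \pm L_i \pm L_j$ ($i\neq j$) now carries a two-dimensional root space that decomposes into the two ``restricted'' pieces $f^1_r$ and $f^2_r$. First I would establish part (1): by the explicit formula $\pi_1(\tilde{h}_{L_1-L_2}(t,0)) = \diag\bigl((t)_1,(t^{-1})_2,(t^{-1})_{1+n},(t)_{2+n}\bigr)$ one sees immediately that $\ker(\pi_1)\cap\tilde{H}^1_{L_1-L_2}$ consists exactly of products $\prod_i \tilde{h}_{L_1-L_2}(t_i,0)$ with $\prod_i t_i = 1$, hence is generated by the symbols $\tilde{h}_{L_1-L_2}(t_1,0)\tilde{h}_{L_1-L_2}(t_2,0)\tilde{h}_{L_1-L_2}(t_1t_2,0)^{-1}$. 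The analogous statement holds for each $\tilde{H}^\delta_r$ with $r = \pm L_i\pm L_j$. To identify all these kernel-subgroups, I would use that the restricted roots $\gamma^\delta$ all lie in a single orbit under $W_0 = S_{2n}$ (observed just before Lemma \ref{le:2}), so conjugation by suitable $\tilde{w}$'s carried out via Lemma \ref{le:2} moves $\tilde{h}^\delta_r$-symbols to $\tilde{h}^1_{L_1-L_2}$-symbols; an argument in the style of \cite[Lemma 8.2]{Moore} then gives the inclusion $\ker(\pi_1)\cap\tilde{H}^\delta_r \subseteq \ker(\pi_1)\cap\tilde{H}^1_{L_1-L_2}$, and symmetry gives equality. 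This proves (1).

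For part (2), the point is that $\tilde{H}_{2L_n}$ does \emph{not} contribute a new kernel class, in contrast to the genuine $Sp(2n,\RR)$ situation where it produced the extra generator $\tilde{H}_0$. The mechanism is the identity $f_{2L_i}(t) = t e_{i,i+n}$ together with the fact that inside $SL(2n,\KK)$ the long root $SL_2$-copy attached to $2L_n$ sits inside an $SL_3$ (or larger) generated together with $f^\delta_{L_{n-1}-L_n}$, which is simply connected. Concretely I would reprove the analogue of Lemma \ref{le:5}(i), expressing $\tilde{h}_{2L_n}(t)$ in terms of $\tilde{h}^\delta_{L_{n-1}-L_n}$ and $\tilde{h}^\delta_{L_{n-1}+L_n}$ symbols by conjugating through $\tilde{w}_{L_{n-1}-L_n}(1,0)$ and $\tilde{w}_{L_{n-1}-L_n}(0,1)$ using Lemma \ref{le:2}; since here $\KK^*$ already contains all the square roots needed — or, more precisely, since the relevant rank-one computation in $SL_2(\KK)$ does not see the fundamental group of $Sp$ — one gets $\tilde{H}_{2L_n}\subseteq \tilde{H}^1_{L_{n-1}-L_n}\cdot \tilde{H}^2_{L_{n-1}-L_n}\cdot \tilde{H}^1_{L_{n-1}+L_n}\cdot \tilde{H}^2_{L_{n-1}+L_n}$ with \emph{no} residual cyclic factor. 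Intersecting with $\ker(\pi_1)$ and applying (1) then yields $\ker(\pi_1)\cap\tilde{H}_{2L_n}\subseteq\ker(\pi_1)\cap\tilde{H}^1_{L_1-L_2}$, and the reverse inclusion is part of (1).

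For part (3) I would first invoke the preceding Proposition to get $\ker(\pi_1)\subseteq Z(\widetilde{G})\subseteq\tilde{H}$, and then use the decomposition $\tilde{H} = \bigl(\prod_{\beta = L_i-L_{i+1}}(\tilde{H}^1_\beta\tilde{H}^2_\beta)\bigr)\cdot\tilde{H}_{2L_n}$ from Lemma \ref{le:3}(iii). Any $h\in\ker(\pi_1)$ is thus a product of $\tilde{h}^\delta_{L_i-L_{i+1}}$-symbols and a $\tilde{H}_{2L_n}$-part; projecting by $\pi_1$ to the diagonal torus forces the ``$t$-parameters'' of the individual simple-root factors to multiply to $1$ in each coordinate, so modulo $\ker(\pi_1)$ the $h$-factors can be collapsed. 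Combining this coordinatewise constraint with (1) and (2), which say every factor's kernel-part already lies in $\ker(\pi_1)\cap\tilde{H}^1_{L_1-L_2}$, gives $\ker(\pi_1)\subseteq\ker(\pi_1)\cap\tilde{H}^1_{L_1-L_2}$; the reverse inclusion is trivial. The main obstacle I anticipate is the bookkeeping in part (2): one must verify carefully that the rank-one reduction for the long root $2L_n$ inside $SL(2n,\KK)$ genuinely avoids producing an extra $\ZZ/2$ (or $\ZZ$) class — i.e. that the phenomenon behind relation \eqref{for:22} and $\tilde{H}_0$ in Theorem \ref{th:3} is an artifact of $Sp$ and disappears here — and this requires tracking the conjugation formulae of Lemma \ref{le:2} through several steps with the two restricted components kept separate, exactly as in the chain of computations in the proof of Lemma \ref{le:1}.
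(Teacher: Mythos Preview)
Your approach to (1) and (3) is essentially the paper's: the Weyl-orbit argument \`a la \cite[Lemma 8.2]{Moore} for (1), and the decomposition of $\tilde{H}$ from Lemma \ref{le:3}(iii) together with coordinatewise reading of $\pi_1$ for (3). One small slip: $\pi_1(\tilde{h}_{L_1-L_2}(t,0))$ only moves positions $1$ and $2$, not $1+n$ and $2+n$, since $f^1_{L_1-L_2}=e_{1,2}$ alone is involved; this does not affect the argument.

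For (2) you take a genuinely different and harder road than the paper. You propose to rerun the Lemma \ref{le:5}(i) reduction, expressing $\tilde{h}_{2L_n}(t)$ through $\tilde{h}^\delta_{L_{n-1}\pm L_n}$ and then checking that no residual cyclic factor $\tilde{H}_0$ appears. The paper bypasses this entirely: the key observation is that the \emph{restricted} root attached to $2L_n$ is $e_{n,2n}$, an ordinary root of the ambient $A_{2n-1}$ system, hence lies in the same $W_0=S_{2n}$ orbit as every other restricted root $e_{k,\ell}$. Consequently the \cite[Lemma 8.2]{Moore} conjugation argument applies uniformly and gives $\ker(\pi_1)\cap\tilde{H}_{2L_n}=\ker(\pi_1)\cap\tilde{H}^1_{L_1-L_2}$ in one stroke, exactly as in (1). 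Your route would also work in principle, but the bookkeeping you anticipate is real, whereas the paper's route makes the disappearance of the symplectic $\tilde{H}_0$ phenomenon automatic: in $SL(2n,\KK)$ the long root $2L_n$ is not genuinely ``long'' from the ambient $A_{2n-1}$ viewpoint, so there is nothing special to check.
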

\begin{proof}(1) (2) Since these simple roots belong to the same orbit under the Weyl
group, an argument similar to one in \cite[Lemma 8.2]{Moore},  shows
that $\ker(\pi_1)\cap \tilde{H}_{r}^\delta\subseteq \ker(\pi_1)\cap
\tilde{H}_{L_1-L_2}^1$ for all roots $r=\pm L_i\pm L_j(i\neq j)$ and
$\ker(\pi_1)\cap \tilde{H}_{2L_n}=\ker(\pi_1)\cap
\tilde{H}_{L_1-L_2}^1$. This proves (1) and (2).

(3) For any $h\in \tilde{H}$, by Lemma \ref{le:3}, $h$ can be
written as $$h=h_1^1h_2^1\dots h_{n-1}^1h_1^2h_2^2\dots
h_{n-1}^2h_0$$ where $h_i^\delta\in
\tilde{H}_{L_{i}-L_{i+1}}^\delta(i\leq n-1)$, $\delta=1,2$ and
$h_0\in\tilde{H}_{2L_n}$.

If $\pi_1(h)=e$, notice
\begin{align*}
&\pi_1(\tilde{H}_{L_{i}-L_{i+1}}^1)=\text{diag}\left(a_i,(a^{-1})_{i+1}\right),a\in\KK^*\\
&\pi_1(\tilde{H}_{L_{i}-L_{i+1}}^2)=\text{diag}\left(a_{i+n},a^{-1}_{i+n+1}\right),a\in\KK^*,
\end{align*}
we have $\pi_1(h_i^\delta)=e$ for $0\leq i\leq n-1$, $\delta=1,2$
and $\pi_1(h_0)=e$. By (1) and (2) we get the conclusion.
\end{proof}

For $t_1,t_2\in\RR^*$, we define:
\begin{align*}
&\{t_1,t_2\}=\tilde{h}_{L_1-L_2}(t_1,0)\tilde{h}_{L_1-L_2}(t_2,0)\tilde{h}_{L_1-L_2}(t_1\cdot
t_2,0)^{-1}.
\end{align*}
Now in exactly the same manner as the proof in the appendix of
\cite{Moore}, we prove that these $\{t_1,t_2\}$'s, $\delta=1,2$
satisfy the conditions
\begin{lemma}\label{le:24}
\begin{align*}
\{t_1, t_2\}&=\{t_2,t_1\}^{-1}\qquad \forall t_1,t_2\in \KK^*,\\
\{t_1, t_2\cdot t_3\}&=\{t_1,t_2\}\cdot\{t_1,t_3\}\qquad \forall t_1, t_2,t_3\in \KK^*,\\
\{t_1\cdot t_2,
t_3\}&=\{t_1,t_3\}\cdot\{t_2,t_3\}\qquad \forall t_1, t_2,t_3\in \KK^*,\\
\{t,1-t\}&=1\qquad \forall t\in \KK^*, t\neq 1,\\
\{t,-t\}&=1\qquad \forall t\in\KK^*.
\end{align*}
Hence we also define a symbol on $\KK^*$.
\end{lemma}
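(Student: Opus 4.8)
The plan is to follow the classical argument of Moore's appendix (and the parallel treatment in Steinberg's lectures), adapting it to the restricted-root setup of $SL(2n,\KK)$ where the relevant simple root is $L_1-L_2$ and we work with the one-parameter family $\tilde{h}_{L_1-L_2}(t,0)$. The first observation is that, by Lemma \ref{le:23}(3), the symbol $\{t_1,t_2\}$ actually lies in $\ker(\pi_1)\cap\tilde{H}_{L_1-L_2}^1\subseteq Z(\widetilde{G})$; this centrality is what makes all the bilinear manipulations legitimate, since we may freely move these symbols past any element of $\widetilde{G}$. So the first step I would carry out is to record that $\{t_1,t_2\}\in Z(\widetilde{G})$ and that the map $(t_1,t_2)\mapsto\{t_1,t_2\}$ is, by its very definition as $\tilde{h}_{L_1-L_2}(t_1,0)\tilde{h}_{L_1-L_2}(t_2,0)\tilde{h}_{L_1-L_2}(t_1t_2,0)^{-1}$, measuring the failure of $t\mapsto\tilde{h}_{L_1-L_2}(t,0)$ to be a homomorphism.

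Next I would establish the bimultiplicativity identities, i.e. $\{t_1,t_2 t_3\}=\{t_1,t_2\}\{t_1,t_3\}$ and the symmetric version $\{t_1 t_2,t_3\}=\{t_1,t_3\}\{t_2,t_3\}$. These follow formally from the cocycle-type identity satisfied by any function $f$ with values in a group modulo its center: writing $c(t_1,t_2)=f(t_1)f(t_2)f(t_1t_2)^{-1}$ with $c$ central, associativity of the product $f(t_1)f(t_2)f(t_3)$ expanded in two ways yields $c(t_1,t_2)c(t_1t_2,t_3)=c(t_2,t_3)c(t_1,t_2t_3)$, and then centrality lets one solve for the multiplicativity in each slot once one also knows $c(1,t)=c(t,1)=1$, which is immediate from $\tilde{h}_{L_1-L_2}(1,0)=e$. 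From bimultiplicativity the antisymmetry $\{t_1,t_2\}=\{t_2,t_1\}^{-1}$ will drop out once we have $\{t,-t\}=1$ (a standard deduction: $1=\{t_1 t_2,-t_1t_2\}$ expanded bilinearly, compared with $\{t,-t\}=1$ for each factor, forces $\{t_1,t_2\}\{t_2,t_1\}=1$).

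The genuine content, and the step I expect to be the main obstacle, is the Steinberg relation $\{t,1-t\}=1$ for $t\neq 0,1$, together with $\{t,-t\}=1$. Here one cannot avoid an explicit computation inside $\widetilde{G}$: one uses the commutation relations \eqref{for:9}, the definition of $\tilde{w}_{L_1-L_2}$ and $\tilde{h}_{L_1-L_2}$, and the $SL_2$-type identities relating $\tilde{x}_{L_1-L_2}$, $\tilde{x}_{L_2-L_1}$ and $\tilde{h}_{L_1-L_2}$ restricted to the first root-space coordinate, exactly as in Moore's appendix and in \cite[\S 9]{Moore} or Steinberg's lectures. The point to check carefully is that, because $W_0$ does not preserve $\Phi$ but only the restricted root system (as noted just before Lemma \ref{le:2}), one must work consistently within the ``$\delta=1$'' restricted-root subsystem generated by $f^1_{L_1-L_2}$, $f^1_{L_2-L_1}$ and the corresponding Cartan element; on that subsystem the group behaves exactly like $SL(2,\KK)$ sitting inside, so Moore's $SL_2$ calculation applies verbatim. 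Concretely, $\{t,-t\}=1$ comes from $\tilde{w}_{L_1-L_2}(t,0)=\tilde{w}_{L_2-L_1}(-t^{-1},0)$ and a short conjugation identity, while $\{t,1-t\}=1$ uses the relation $\tilde{x}_{L_1-L_2}(t,0)\,\tilde{x}_{L_2-L_1}((1-t^{-1})?,0)\cdots$ that factors a suitable product of root-group elements through $\tilde{h}$'s; once one verifies these two identities the rest of the lemma is formal, and the final sentence — that $\{\cdot,\cdot\}$ descends to a Steinberg symbol on $\KK^*$ — is then just the definition of a Steinberg symbol, namely a bimultiplicative map killing $\{t,1-t\}$.
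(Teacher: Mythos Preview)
Your approach is the paper's: the paper simply asserts that the identities are proved ``in exactly the same manner as the proof in the appendix of \cite{Moore},'' and you propose to carry out precisely that argument, correctly observing that the $\delta=1$ restricted-root subsystem generated by $f^1_{L_1-L_2}$ and $f^1_{L_2-L_1}$ behaves like an embedded $SL(2,\KK)$ so that Moore's $SL_2$ computation transfers verbatim.

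One correction to your outline: bimultiplicativity does \emph{not} follow formally from the $2$-cocycle identity $c(t_1,t_2)c(t_1t_2,t_3)=c(t_2,t_3)c(t_1,t_2t_3)$ together with centrality and the normalization $c(1,t)=c(t,1)=1$. A generic central $2$-cocycle on $\KK^*$ need not be bimultiplicative. In Moore's argument (and Steinberg's), this step genuinely uses the conjugation relations of the Steinberg group --- here, Lemma~\ref{le:2} --- to compute how $\tilde{h}_{L_1-L_2}(t_1,0)$ conjugates $\tilde{w}_{L_1-L_2}(t_2,0)$ and hence $\tilde{h}_{L_1-L_2}(t_2,0)$; bilinearity is extracted from that. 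Once you incorporate that input, the remainder of your plan (the derivation of antisymmetry from $\{t,-t\}=1$ via bimultiplicativity, and the $SL_2$ computation for $\{t,1-t\}=1$ and $\{t,-t\}=1$) is correct and matches what the reference to \cite{Moore} is invoking.
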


\subsection{Proof of Theorem \ref{th:5}}

Notice for $t\in\KK^*$
$$\pi_1(\tilde{h}_{L_1-L_2}(t,0))=\diag\left(t_1,(t^{-1})_2\right).$$
Then
\begin{align*}
\ker(\pi_1)\cap \tilde{H}_{L_1-L_2}^1=\{\prod_i
\tilde{h}_{L_1-L_2}(t_i,0)\mid \text{ \emph{with} }\prod_i t_i=1\}.
\end{align*}
It follows that $\ker(\pi_1)\cap \tilde{H}_{L_1-L_2}$ is generated
by elements
$$\tilde{h}_{L_1-L_2}(t_1,0)\tilde{h}_{L_1-L_2}(t_2,0)\tilde{h}_{L_1-L_2}(t_1t_2,0)^{-1},\text{ where }t_1,t_2\in\KK^*.$$
Hence it is a immediate result by (3) of Lemma \ref{le:23}.

\end{document}